\tikzset{
    every node/.style={inner sep=2pt},
    arr/.style={thin, -{Stealth[scale=1.0]}},
    revarr/.style={thin, {Stealth[scale=1.0]}-},
    loopstyle/.style={thin, -{Stealth[scale=1.0]},
        loop left, min distance=1.5cm, out=135, in=225, distance=1.5cm
    }
}
\newcommand{\CC}{{\rm\bf C}}
\newcommand{\RR}{{\rm\bf R}}
\newcommand{\QQ}{{\rm\bf Q}}
\newcommand{\ZZ}{{\rm\bf Z}}
\newcommand{\FF}{{\rm\bf F}}
\newcommand{\EE}{{\rm\bf E}}
\DeclareMathOperator{\Spec}{\mathrm{Spec}}
\DeclareMathOperator{\Specm}{\mathrm{Specm}}
\DeclareMathOperator{\SL}{\mathrm{SL}}
\DeclareMathOperator{\SO}{\mathrm{SO}}
\DeclareMathOperator{\Aut}{\mathrm{Aut}}
\DeclareMathOperator{\Hom}{\mathrm{Hom}}
\DeclareMathOperator{\Gal}{\mathrm {Gal}}
\DeclareMathOperator{\Sym}{\mathrm {Sym}}
\DeclareMathOperator{\res}{\mathrm{Res}}
\DeclareMathOperator{\id}{\mathrm{id}}
\DeclareMathOperator{\Lie}{\mathrm{Lie}}
\DeclareMathOperator{\HC}{\mathrm{HC}}
\DeclareMathOperator{\Rep}{\mathrm{Rep}}
\newcommand{\liesl}{{\mathfrak {sl}}}
\theoremstyle{plain}
\newtheorem{theorem}{Theorem}[section]
\newtheorem{corollary}[theorem]{Corollary}
\newtheorem{proposition}[theorem]{Proposition}
\theoremstyle{remark}
\newtheorem{remark}[theorem]{Remark}
\newtheorem{definition}[theorem]{Definition}
\newtheorem{example}[theorem]{Example}
\begin{document}

    \title{Rational structures on quivers\\and a generalization of Gelfand's equivalence}

    \author{Fabian Januszewski}
    \address{Institut f\"ur Mathematik, Fakult\"at EIM, Paderborn University, Warburger Str.\ 100, 33098 Paderborn, Germany}
    \email{fabian.januszewski@math.uni-paderborn.de}
    \subjclass[2010]{Primary: 16G20; Secondary: 22E47}

    \begin{abstract}
      We introduce the notion of rational structure on a quiver and associated representations to establish a coherent framework for studying quiver representations in separable field extensions. This notion is linked to a refinement of the notion of $K$-species, which we term \'etale $K$-species: We establish a categorical anti-equivalence between the category of $K$-rational quivers and that of \'etale $K$-species, which extends to an equivalence of their respective representation categories.

      For $K$-rational quivers there is a canonical notion of base change, which suggests a corresponding notion of base change for (\'etale) $K$-species which we elaborate.

      As a primary application, we generalize Gelfand's celebrated equivalence between certain blocks of Harish-Chandra modules for $\mathrm{SL}_2(\mathbb{R})$ and representations of the Gelfand quiver to a rational setting. To this end, we define a $\mathbb{Q}$-rational structure on the Gelfand quiver and its representations. A key technical tool, which we call unipotent stabilization, is developed to construct the functor from certain rational Harish-Chandra modules to nilpotent rational quiver representations. We prove that this functor is an equivalence. A similar result is established for the cyclic quiver. A notable consequence of this rational framework is that the defining relation of the Gelfand quiver becomes superfluous when working over fields not containing $\sqrt{-1}$. This allows us to recast our results in the language of $\QQ$-species without relations.
    \end{abstract}

    \maketitle

    {\tiny
      \tableofcontents
    }

    \section*{Introduction}

    Since the pioneering work of Gabriel \cite{Gabriel1972}, the representation theory of quivers is well understood over algebraically closed fields. For general non-algebraically closed fields, by the foundational work of Dlab and Ringel \cite{DlabRingel1975,DlabRingel1976}, the notion $K$-species is a suitable generalization of quivers. Within this framework the passage from a non-algebraically closed field to an algebraically closed field is very useful and at times essential in certain arguments.

    In the first half of our work, we build a coherent framework which allows for the study of quiver representations over non-algebraically closed fields on the one hand, and which on the other hand admits a suitable notion of base change for passage to (potentially algebraically closed) field extensions.

    The underlying concept is that of a rational structure on a quiver and associated representations. A central theme is to relate this notion to a suitable refinement of the well known concept of $K$-species.

    In this context, we introduce the notion of {\em \'etale $K$-species} and establish a categorical anti-equivalence between the category of $K$-rational quivers and the category of \'etale $K$-species (Theorem \ref{thm:KrationalquiveretaleKspeciesantiequivalence}). This structural result is complemented by a corresponding equivalence between the respective categories of representations (Theorem \ref{thm:species_equivalence}).

    The natural notion of base change for $K$-rational quivers has a natural analogue for \'etale $K$-species, which in turn suggests a general analogue for the notion of base change for general $K$-species (cf.\ Remark \ref{rmk:simplealgebraspecies}). As is expected, restriction and base change for $K$-rational quivers and \'etale $K$-species form adjoint pairs of functors (Propositions \ref{prop:quiverrestrictionbasechangeadjunction} and \ref{prop:speciesrestrictionbasechangeadjunction}).

    \smallskip
    With a view towards applications in representation theory, we apply this framework to the study of rational Harish-Chandra modules for $\SL_2(\RR)$. Motivated by (expected) rationality and integrality properties of automorphic $L$-functions of (hypothetical) motivic origin, a rational theory of Harish-Chandra modules was developed by G\"unter Harder, Michael Harris and the author (cf.\ \cite{HarderRaghuram-Book}, \cite{Harris2012,Harris2020Erratum}, \cite{Januszewski18}).

    In I.~M.~Gelfand's influential 1972 ICM adress, Gelfand observed that the category of nilpotent complex representations of the quiver with relation
    \begin{center}
    \begin{tikzpicture}[baseline=(current bounding box.center)]
    \node (A) {$-$};
    \node (B) [right=6em of A] {$\star$};
    \node (C) [right=6em of B] {$+$};
    \node (D) [right=2em of C] {$a_{-} b_{-} = a_{+} b_{+}$};
    \draw [arr, bend left=20] (A) to node [midway, above] {$a_-$} (B);
    \draw [arr, bend left=20] (B) to node [midway, below] {$b_-$} (A);
    \draw [arr, bend right=20] (B) to node [midway, below] {$b_+$} (C);
    \draw [arr, bend right=20] (C) to node [midway, above] {$a_+$} (B);
    \end{tikzpicture}
    \end{center}
    which now bears his name, is equivalent to the blocks $\HC_\lambda(\liesl_2,\SO(2))$ of Harish-Chandra modules for $\SL_2(\RR)$ which contain the $\sqrt{\lambda}$-dimensional irreducible representation of $\SL_2$.

    We generalize Gelfand's equivalence to arbitrary fields of characteristic $0$. To achieve this, we first define a $\QQ$-rational structure on Gelfand's quiver (and the cyclic quiver) and obtain a $\QQ$-rational quiver $\Gamma_\QQ$ (cf.\ Examples \ref{ex:gelfand} and \ref{ex:cyclic}). The associated \'etale $\QQ$-species is the following $\QQ$-species $S_{\rm Gelfand}$:
    \begin{center}
    \begin{tikzpicture}[baseline=(current bounding box.center)]
    \node (Mstar) {$\QQ=L_{\{\star\}}$};
    \node (M+)    [right=4em of Mstar] {$\quad\quad L_{\{\pm\}}=\QQ[\sqrt{-1}]$};
    \draw [arr, bend right=20] (Mstar) to node [below] {$\QQ[\sqrt{-1}]$} (M+);
    \draw [arr, bend right=20] (M+) to node [above] {$\QQ[\sqrt{-1}]$} (Mstar);
    \end{tikzpicture}
    \end{center}
    Here the $\QQ,\QQ[\sqrt{-1}]$- and $\QQ[\sqrt{-1}],\QQ$-bimodules
    \[
      {}_{\{\star\}}M_{\{\pm\}}=\QQ[\sqrt{-1}],\quad
      {}_{\{\pm\}}M_{\{\star\}}=\QQ[\sqrt{-1}],
    \]
    carry the natural bimodule structures. In the case of the cyclic quiver the bimodule structure turns out to be slightly more interesting (cf.\ Example \ref{ex:cyclic}).

    The construction of a $\QQ$-rational structure on a quiver representation associated with a Harish-Chandra module is non-trivial and generally requires a procedure which we term {\em unipotent stabilization} (see Subsection \ref{sec:unipotentstabilization}). The properties of unipotent stabilization play a central role not only in the construction itself but also in the proof of our main result, Theorem \ref{thm:EEQQequivalence}, and its counterpart for the cyclic quiver, Theorem \ref{thm:cyclicEEQQequivalence}, for the case $\ell=0$.

    For $\lambda=\ell^2\geq 1$, we establish the desired equivalence of abelian categories:
    \begin{equation}
      \EE_\QQ\colon\HC_\lambda(\liesl_2,\SO(2))_\QQ \simeq \Rep^{\mathrm{nil}}(\Gamma_\QQ).
      \label{eq:HCquiverequivalence}
    \end{equation}
    Here, $\HC_\lambda(\liesl_2,\SO(2))_\QQ$ denotes the block of $\QQ$-rational Harish-Chandra modules with generalized infinitesimal character $\lambda$ containing the irreducible $\ell$-dimensional representation of $\SL_2$. On the right hand side of \eqref{eq:HCquiverequivalence}, $\Rep^{\mathrm{nil}}(\Gamma_\QQ)$ is the category of nilpotent, finite-dimensional, $\QQ$-rational representations of the Gelfand quiver, endowed with a suitable $\QQ$-rational structure (cf. Example \ref{ex:gelfand}).

    Combining this result with Theorem \ref{thm:species_equivalence} on the equivalence of $\QQ$-rational quiver representations and \'etale $\QQ$-species representations, we obtain the equivalence
    \begin{equation}
      \EE_\QQ\colon\HC_\lambda(\liesl_2,\SO(2))_\QQ \simeq \Rep^{\mathrm{nil}}(S_{\rm Gelfand}).
      \label{eq:HCspeciesequivalence}
    \end{equation}
    
    The equivalence \eqref{eq:HCquiverequivalence} generalizes to an equivalence over arbitrary fields $E$ of characteristic zero (Remark \ref{rmk:generalbases}):
    \[
    \EE_E\colon\HC_\lambda(\liesl_2,\SO(2))_E \simeq \Rep^{\mathrm{nil}}(\Gamma_E).
    \]
    The equivalence \eqref{eq:HCspeciesequivalence} generalizes to an equivalence over arbitary fields $E$ of characteristic zero as well: For any field $E$ of characteristic $0$ we have
    \[
      \EE_E\colon\HC_\lambda(\liesl_2,\SO(2))_E \simeq \Rep^{\mathrm{nil}}(S_{\rm Gelfand}\otimes_\QQ E).
    \]
    We emphasize that the species $S_{\rm Gelfand}\otimes_\QQ E$ changes structurally if $\sqrt{-1}\in E$, base change being a non-trivial operation for (\'etale) $K$-species (cf.\ Definition \ref{def:basechangeforspecies}). Moreover, a relation has to be added whenever $\sqrt{-1}\in E$.

    It is remarkable that whenever $\sqrt{-1}\not\in E$, then neither the $E$-rational Gelfand quiver, nor the corresponding \'etale $E$-species, need to be equipped with a relation. The rationality properties render the defining relation $a_{-} b_{-} = a_{+} b_{+}$ superfluos.
    
    Finally, in Section \ref{sec:examples}, we illustrate our main equivalence with a handful of fundamental examples, including finite-dimensional representations, discrete series, and principal series representations, and describe their corresponding quiver and species representations.

\medskip    
\noindent{\bf Acknowledgements.}
The author thanks Igor Burban and William Crawley-Boevey for fruitful discussions. The author acknowledges support by the Deutsche Forschungsgemeinschaft (DFG, German Research Foundation) -- SFB-TRR 358/1 2023 -- 491392403.


\subsection*{Notation}

All homomorphisms of commutative rings are assumed to be unitial. Let $K$ denote a field and let $V$ and $W$ denote right and left $K$-vector spaces respectively. Then $V\otimes_K W$ denotes the respective tensor product. If $L$ is another field and if $V$ carries an $L,K$-bimodule structure, then $V\otimes_KW$ is canonically a left $L$-vector space and likewise for $W$. Moreover, if $\sigma\colon K\to L$ is a ring homomorphism, then for any right vector space $V$ and any $L$-vector space $W$ we consider the twisted tensor product $V\otimes_{K,\sigma}W$, where for all $a\in K$ and any $v\in V$, $w\in W$:
\[
va\otimes w=v\otimes\sigma(a)w.
\]
Similarly, if $V$ is a right $L$-vector space, and $W$ a left $K$-vector space, we understand $V\otimes_{\sigma,K}W$ via the analogous relation
\[
v\otimes aw=v\sigma(a)\otimes w.
\]
For a finite or infinite Galois extension $L/K$ of fields we write $\Gal(L/K)$ for the Galois group with the profinite topology. I.\,e.\ if $L/K$ is finite, then $\Gal(L/K)$ is discrete. All $\Gal(L/K)$-modules are considered continuous with the target being topologized discretely.

\section{Rational structures on quivers}\label{sec:rationalstructures}

Since Galois descent plays a central role, we recall some well-known concepts and statements.

\subsection{Galois descent}

Let $L/K$ be a finite or infinite Galois extension with Galois group $\Gal(L/K)$. For $\sigma\in\Gal(L/K)$ and $\alpha\in L$, we sometimes write $\alpha^\sigma=\sigma(\alpha)$, even though it is strictly speaking a left action.

Let $V, W$ be $L$-vector spaces. The Galois group $\Gal(L/K)$ acts (continously) on the (discrete) set of $L$-linear maps $\Hom_L(V,W)$ via
\[
\forall\sigma\in\Gal(L/K),\,\phi\in\Hom_L(V,W)\colon\quad \phi^\sigma:=\sigma\circ\phi\circ\sigma^{-1}.
\]

We call a map $\phi\colon V\to W$ {\em semi-linear} with respect to a given $\sigma\in\Gal(L/K)$, or simply $\sigma$-linear, if for all $v\in V$ and all $\alpha\in L$:
\[
\phi(\alpha\cdot v)=\sigma(\alpha)\phi(v).
\]

Every $L$-vector space $V$ can be considered as a $K$-vector space via the embedding $K\to L$. A {\em $K$-structure} on an $L$-vector space $V$ is a group homomorphism
\[
\varphi\colon\Gal(L/K)\to\Aut_K(V),\;\sigma\mapsto\varphi_\sigma
\]
with the property that for each $\sigma\in\Gal(L/K)$, the map
\[
\varphi_\sigma\colon V\to V
\]
is $\sigma$-linear. In the following, we identify $\varphi$ with the collection $\varphi_\bullet=(\varphi_\sigma)_{\sigma\in\Gal(L/K)}$.

We call a map $\phi\in\Hom_L(V,W)$ between two $L$-vector spaces with $K$-structure {\em $K$-rational} if $\phi$ commutes with the two given actions of $\Gal(L/K)$ on $V$ and $W$.

If $V$ is a $K$-vector space, then $L\otimes_K V$ carries a canonical $K$-structure, which is explicitly given by
\[
\varphi_\sigma(\alpha\otimes v)=\sigma(\alpha)\otimes v,\quad \sigma\in\Gal(L/K),\,\alpha\in L,\,v\in V.
\]
Every $K$-linear map $\phi\colon V\to W$ then induces a $K$-rational map ${\bf1}\otimes\phi\colon L\otimes V\to L\otimes W$.

The following result is well known:

\begin{theorem}[Galois descent]\label{thm:galoisdescent}
  Let $L/K$ be a finite or infinite Galois extension. The category $\rm{Vec}_K$ of $K$-vector spaces is equivalent, via the functor
  \[
  V\;\mapsto\; \left(L\otimes_K V,(\varphi_\sigma)_{\sigma}\right)
  \]
  to the category of $L$-vector spaces with a $K$-rational structure and $K$-rational maps as morphisms. A quasi-inverse functor is given by
  \[
  \left(V,(\varphi_\sigma)_\sigma\right)\;\mapsto\;V^{\Gal(L/K)}:=\{v\in V\mid\forall\sigma\in\Gal(L/K)\colon \varphi_\sigma(v)=v\}.
  \]
\end{theorem}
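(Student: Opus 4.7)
The plan is to exhibit explicit unit and counit natural transformations between the two functors and verify that both are isomorphisms. Write $\Gamma=\Gal(L/K)$. Define the unit $\eta_V\colon V\to(L\otimes_K V)^\Gamma$ by $v\mapsto 1\otimes v$; it lands in $\Gamma$-invariants because the canonical $K$-structure satisfies $\varphi_\sigma(1\otimes v)=1\otimes v$. Define the counit $\varepsilon_W\colon L\otimes_K W^\Gamma\to W$ by $\alpha\otimes w\mapsto\alpha w$; this is $L$-linear, and $\Gamma$-equivariant for the canonical structure on the source because $\varphi_\sigma(\alpha w)=\sigma(\alpha)\varphi_\sigma(w)=\sigma(\alpha)w$ whenever $w\in W^\Gamma$. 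Naturality in $V$ and $W$, as well as the triangle identities, are immediate from these formulas.

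That $\eta_V$ is a bijection is elementary: a $K$-basis $(e_i)$ of $V$ yields an $L$-basis $(1\otimes e_i)$ of $L\otimes_K V$ on which $\Gamma$ acts componentwise on the $L$-coefficients, and the identity $L^\Gamma=K$ from Galois theory identifies the $\Gamma$-invariants with $\bigoplus_i K\cdot(1\otimes e_i)$, matching the image of $\eta_V$.

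The main obstacle is to show that $\varepsilon_W$ is an isomorphism. I would first reduce to the case $[L:K]<\infty$ by continuity: every $w\in W$ has open stabilizer in $\Gamma$, so $w\in W^{\Gal(L/M)}$ for some finite Galois intermediate extension $M/K$, and $W^{\Gal(L/M)}$ is naturally an $M$-vector space carrying a $K$-structure for the finite group $\Gal(M/K)$; given the finite case, a colimit argument over the directed system of finite Galois subextensions of $L/K$ upgrades to the general statement, since $L=\varinjlim_M M$ and $W=\varinjlim_M W^{\Gal(L/M)}$. For finite $L/K$ the key input is Dedekind's linear independence of characters, equivalently the invertibility over $L$ of the matrix $(\sigma(\xi_j))_{\sigma\in\Gamma,\,j}$ attached to any $K$-basis $(\xi_j)$ of $L$.

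Armed with this, surjectivity of $\varepsilon_W$ follows from Galois averaging: for $w\in W$ the elements $v_j:=\sum_{\sigma\in\Gamma}\sigma(\xi_j)\varphi_\sigma(w)$ lie in $W^\Gamma$, and the inverse of the above matrix exhibits $w$ explicitly as an $L$-linear combination of the $v_j$. Injectivity reduces, after expanding the coefficients of a putative kernel element in the basis $(\xi_j)$, to showing that $\sum_j\xi_j w_j=0$ with $w_j\in W^\Gamma$ forces each $w_j=0$; applying every $\varphi_\sigma$ converts this into the linear system $\sum_j\sigma(\xi_j)w_j=0$ whose matrix is precisely the invertible one above, whence the conclusion. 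This establishes the equivalence of categories.
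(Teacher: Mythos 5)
Your proof is correct. Note that the paper offers no proof of this theorem at all --- it is stated as a well-known fact --- so there is nothing internal to compare against; your argument is the standard one, and every step checks out: the unit and counit are the right maps, the unit is handled by $L^{\Gal(L/K)}=K$ applied coefficientwise to a $K$-basis, and the counit is handled in the finite case by Galois averaging $v_j=\sum_{\sigma}\sigma(\xi_j)\varphi_\sigma(w)$ (these are invariant precisely because $\varphi$ is a group homomorphism, i.e.\ the cocycle condition) together with the invertibility of the matrix $\bigl(\sigma(\xi_j)\bigr)_{\sigma,j}$, which is Dedekind's independence of characters; your injectivity argument via the same matrix is also fine, since the system $\sum_j\sigma(\xi_j)w_j=0$ lives over $L$ and the matrix is invertible over $L$. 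One point worth making explicit: your reduction of the infinite case to finite Galois subextensions rests on every vector having open stabilizer, which is not part of the paper's definition of a $K$-structure but is imposed by the standing convention in the Notation section that all Galois actions are continuous for the discrete topology on the target --- without that hypothesis the theorem is false for infinite $L/K$, so you are right to invoke it, and you should cite that convention when you do. Your passage to the limit is also sound because $\bigl(W^{\Gal(L/M)}\bigr)^{\Gal(M/K)}=W^{\Gal(L/K)}$, the comparison maps are all multiplication maps, and tensoring with $W^{\Gal(L/K)}$ commutes with the filtered colimit $L=\varinjlim_M M$.
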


\subsection{Rational structures on quivers}

\begin{definition}[Quiver (Gabriel)]
  A (classical) quiver is a quadruple $\Gamma=(V,E,s,t)$ consisting of a finite set of vertices $V$, a finite set of edges $E$, and two maps $s,t\colon E\to V$, which assign to each edge $e\in E$ a source $s(e)\in V$ and a target $t(e)\in V$.
\end{definition}

\begin{definition}[$K$-rational (\'etale) quivers]\label{def:rationalquivers}
  An {\em (\'etale) $K$-rational structure} on a quiver $\Gamma=(V,E,s,t)$ is a group homomorphism $\rho\colon\Gal(L/K)\to\Aut(\Gamma)$ into the automorphism group of $\Gamma$ for a Galois extension $L/K$ with Galois group $\Gal(L/K)$, such that the actions of $\Gal(L/K)$ on $V$ and $E$ induced by $\rho$ are continuous when $V$ and $E$ are considered as discrete sets. We write $\Gamma_K:=(\Gamma,\rho)$ for the quiver $\Gamma$ endowed with the rational structure $\rho$. We say that $\Gamma_K$ is an {\em (\'etale) quiver over $K$}.
\end{definition}

\begin{definition}[Split rational quivers]\label{def:splitrationalquivers}
  We call a quiver $\Gamma_K=(\Gamma,\rho)$ over $K$ {\em constant} or {\em split over $K$} if $\rho$ is the trivial group homomorphism, i.\,e.\ if the $\Gal(L/K)$ actions on $V$ and $E$ are both trivial.
\end{definition}

\begin{remark}
A $K$-rational quiver is a diagram in the category of finite $\Gal(L/K)$-sets:
   \begin{center}
   \begin{tikzpicture}[>=Stealth]
    \node (E) at (0,1) {$E$};
    \node (V_left) at (-1.3,0) {$V$};
    \node (V_right) at (1.3,0) {$V$};
    \draw[->] (E) -- node[above] {$s$} (V_left);
    \draw[->] (E) -- node[above] {$t$} (V_right);
    \end{tikzpicture}
   \end{center}
\end{remark}

\begin{remark}
Specifically, a $K$-rational structure $\rho$ on a quiver $\Gamma=(V,E,s,t)$ is given by:
\begin{itemize}
\item[(i)] A (continuous left) $\Gal(L/K)$-action
  \[\rho_V\colon\Gal(L/K)\to\Sym(V)\] on the set of vertices $V$,
  \item[(ii)] a (continuous left) $\Gal(L/K)$-action \[\rho_E\colon\Gal(L/K)\to\Sym(E)\] on the set of edges,
\end{itemize}
such that $s$ and $t$ are homomorphisms of $\Gal(L/K)$-sets, i.\,e., the following compatibilities must be satisfied: For all $\sigma\in\Gal(L/K)$ and all edges $e\in E$ it holds that
\begin{equation}
  s(\sigma e)=\sigma s(e)\quad\text{and}\quad t(\sigma e)=\sigma t(e).
  \label{eq:stgaloisequivariance}
\end{equation}
In other words: We reformulated Gabriel's original definition in the category of (continuous discrete) $\Gal(L/K)$-sets.
\end{remark}

\begin{remark}[Relations]
  If $\Gamma$ is a quiver with relations, we require that the Galois action respects them, i.e., that each element of the Galois group maps a given relation to a given relation or, equivalently, that the image of $\rho$ lies in the automorphism group of the quiver $\Gamma$ with relations.
\end{remark}

\begin{remark}
  Let $K^{\rm sep}$ denote a separable algebraic closure of $K$. Then any of the Galois groups $\Gal(L/K)$ occuring in Definition \ref{def:rationalquivers} may be (non-canonically) considered as a quotient of the absolute group $\Gal(K^{\rm sep}/K)$. Therefore all actions in Definition \ref{def:rationalquivers} arise from continuous actions of $\Gal(K^{\rm sep}/K$ on $E$ and $V$. $E$ and $V$ being finite, any $K$-rational structure on the quiver $\Gamma$ factors over a finite quotient $\Gal(K^{\rm sep}/K)\to\Gal(L_0/K)$ with $L_0/K$ a finite Galois extension. Therefore we may often assume that $L/K$ is a finite Galois extensions without loss of generality.

  That being said, in certain situations, such as when considering categorical aspects of $K$-rational quivers, the discussion simplifies if we consider $\Gal(K^{\rm sep}/K)$-actions, as is the case in the following definition.
\end{remark}

\begin{definition}[Rational homomorphisms of quivers]\label{def:rationalquiverhom}
  A homomorphism $\phi\colon\Gamma_K\to \Gamma_K'$ of two quivers $\Gamma_K$ and $\Gamma_K'$ over $K$ is a homomorphism of quivers $\phi\colon\Gamma\to\Gamma'$ which commutes with the $\Gal(K^{\rm sep}/K)$-Galois actions on $\Gamma_K$ and $\Gamma_K'$.
\end{definition}

\begin{definition}[Base change]\label{def:quiverbasechange}
  For any quiver $\Gamma_K=(\Gamma,\rho)$ with rational structure given by an action of $\Gal(L/K)$, we have for any intermediate field $L/E/K$, $E/K$ finite, the $E$-rational quiver
  \[
  \Gamma_E=E\otimes_K\Gamma_K:=(\Gamma,\rho|_{\Gal(L/E)}),
  \]
  which we call the {\em base change} of $\Gamma_K$ to $E$.
\end{definition}

\begin{remark}
  If $\Gamma$ is a quiver and $L/K$ is a Galois extension, then all isomorphism classes of $K$-rational structures on $\Gamma$ can be classified using Galois cohomology
  \[
  H^1(\Gal(K^{\rm sep}/K);\Aut(\Gamma)).
  \]
  Here, we can assume without loss of generality that $\Aut(\Gamma)$ is endowed with the trivial Galois action. If $\Aut(\Gamma)$ is abelian, it follows in particular that
  \[
  H^1(\Gal(K^{\rm sep}/K);\Aut(\Gamma))=\Hom(\Gal(K^{\rm sep}/K),\Aut(\Gamma)).
  \]
\end{remark}

\begin{definition}[Restriction]\label{def:quiverrestriction}
  For any intermediate field $L/E/K$, $E/K$ finite, and any $E$-rational quiver $\Gamma_E=(\Gamma,\rho)$, with $\Gamma=(V,E,s,t)$ we consider the quiver
  \begin{align*}
    \res_{E/K}\Gamma_E=(&\Gal(L/K)\times_{\Gal(L/E)}V,\\
    &\Gal(L/K)\times_{\Gal(L/E)}E,\\
    &\Gal(L/K)\times_{\Gal(L/E)}s,\\
    &\Gal(L/K)\times_{\Gal(L/E)}t\,)
  \end{align*}
  which we call the {\em restriction} of $\Gamma_E$ to $K$.
\end{definition}

\begin{proposition}\label{prop:quiverrestrictionbasechangeadjunction}
  For any intermediate field $L/E/K$, $E/K$ finite, and any $E$-rational quiver $\Gamma_E=(\Gamma,\rho)$, and any $K$-rational quiver $\Gamma_K'$, we have a natural isomorphism
  \begin{equation}
    \Hom_{E}(\Gamma_E,E\otimes_K\Gamma_K')\cong
    \Hom_{K}(\res_{E/K}\Gamma_E,\Gamma_K').
  \end{equation}
\end{proposition}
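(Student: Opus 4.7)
The plan is to derive the adjunction directly from the standard induction–restriction (Frobenius) adjunction for $G$-sets, applied separately on vertices and on edges, and then to verify that the bijection respects the source/target structure maps. Since $\Gal(L/E)$ is (canonically) a subgroup of $\Gal(L/K)$, for any $\Gal(L/E)$-set $X$ and any $\Gal(L/K)$-set $Y$, one has the natural bijection
\[
\Hom_{\Gal(L/K)}\bigl(\Gal(L/K)\times_{\Gal(L/E)} X,\,Y\bigr)\;\cong\;\Hom_{\Gal(L/E)}\bigl(X,\,\res Y\bigr),
\]
explicitly: a $\Gal(L/K)$-equivariant map $\Phi$ on the left corresponds to its restriction $\phi(x):=\Phi([1,x])$ on the right, with inverse $\phi\mapsto\bigl([\sigma,x]\mapsto\sigma\cdot\phi(x)\bigr)$.

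First, I would write $\Gamma=(V,E,s,t)$ and $\Gamma'=(V',E',s',t')$, and apply the above adjunction once to the vertex sets ($X=V$, $Y=V'$) and once to the edge sets ($X=E$, $Y=E'$). This gives, at the level of underlying equivariant set maps, bijections between candidates for $\phi_V,\phi_E$ on the one hand and $\tilde\phi_V,\tilde\phi_E$ on the other. It remains to check that these bijections identify quiver morphisms with quiver morphisms, i.e.\ that the source/target compatibility passes through.

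For this, given a $\Gal(L/E)$-equivariant quiver morphism $\phi\colon\Gamma_E\to E\otimes_K\Gamma_K'$, I would verify by direct computation that
\[
\tilde\phi_V\bigl(\bigl[\sigma,s(e)\bigr]\bigr)=\sigma\cdot\phi_V(s(e))=\sigma\cdot s'(\phi_E(e))=s'\bigl(\sigma\cdot\phi_E(e)\bigr)=s'\bigl(\tilde\phi_E([\sigma,e])\bigr),
\]
where the crucial step is $\sigma\cdot s'=s'\cdot\sigma$, which is exactly the Galois equivariance \eqref{eq:stgaloisequivariance} of $s'$ as part of the $K$-rational structure on $\Gamma_K'$; the same argument applies to $t'$. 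Conversely, starting from a $K$-rational $\Phi\colon\res_{E/K}\Gamma_E\to\Gamma_K'$, the restriction to $[1,\cdot]$ gives $\phi$, and the quiver compatibility for $\Phi$ restricted via $[1,s(e)]=\Gal(L/K)\times_{\Gal(L/E)}s([1,e])$ yields $\phi_V\circ s=s'\circ\phi_E$, and similarly for $t$. Hence the two constructions are mutually inverse and give the desired bijection.

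The only mildly subtle step is the compatibility check with source and target, and this is handled by the $\Gal(L/K)$-equivariance of $s',t'$ together with the fact that $s,t$ are extended to $\res_{E/K}\Gamma_E$ precisely by $\Gal(L/K)\times_{\Gal(L/E)}(-)$, so source and target are interwoven with the adjunction formula in exactly the right way. Naturality in $\Gamma_E$ and $\Gamma_K'$ is inherited directly from the naturality of the underlying $G$-set adjunction, so no additional verification is needed beyond the two base cases already treated.
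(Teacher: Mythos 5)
Your proof is correct and follows essentially the same route as the paper, which simply invokes the adjunction between $\Gal(L/K)\times_{\Gal(L/E)}(-)$ and the forgetful functor on finite Galois-sets; you merely make explicit the unit/counit formulas and the verification that source and target maps (which, by the definition of $\res_{E/K}$, are themselves obtained by applying this induction functor) are respected. Nothing is missing, and the extra compatibility check you spell out is exactly the "elementary application of the universal property" the paper leaves to the reader.
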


\begin{proof}
  This is an elementary application of the universal property of the functor
  \[
  \Gal(L/K)\times_{\Gal(L/E)}(-)\colon \{\text{finite $\Gal(L/E)$-sets}\}\to\{\text{finite $\Gal(L/K)$-sets}\},
  \]
  which is left adjoint to the forgetful functor
  \[
  \{\text{finite $\Gal(L/K)$-sets}\}\to\{\text{finite $\Gal(L/M)$-sets}\}.
  \]
\end{proof}

\begin{remark}
  We remark that quivers over finite fields $\FF_q$ together with an automorphism (of order $2$) have been previously studied in \cite{DengDuParshallWang2008} in the context of quantum groups.
\end{remark}

\subsection{Rational quiver representations}\label{sec:rationalquiverrepresentations}

  \begin{definition}[Representations of rational quivers]\label{def:rationalrepresentations}
    Let $\Gamma_K$ be a quiver over $K$ with respect to a Galois extension $L/K$. A ($K$-rational) {\em representation} $M_K$ of $\Gamma_K$ consists of:
    \begin{itemize}
    \item[(i)]
      A family $M=(M(v))_{v\in V}$ of $L$-vector spaces.
    \item[(ii)] Semi-linear isomorphisms $\varphi_{v,\sigma}\colon M(v)\to M(\sigma v)$ for each $v\in V$ and $\sigma\in\Gal(L/K)$ such that the cocycle condition
      \begin{equation}
        \varphi_{\tau v,\sigma}\circ\varphi_{v,\tau}=\varphi_{v,\sigma\tau}
        \label{eq:semi-linearactiononrep}
      \end{equation}
      is satisfied.
    \item[(iii)] A classical quiver representation of $\Gamma$ on $M$, i.e., for each edge $e\in E$, an $L$-linear map $\phi_e\colon M(s(e))\to M(t(e))$ is given, which is compatible with the Galois action in the following sense:
      For each edge $e\in E$ and each $\sigma\in\Gal(L/K)$, we have
      \begin{equation}
        \phi_{\sigma e}\circ\varphi_{s(e),\sigma}=\varphi_{t(e),\sigma}\circ\phi_e.
        \label{eq:koecheractiononrep}
      \end{equation}
    \end{itemize}
  \end{definition}

  \begin{definition}[Homomorphisms of rational quiver representations]\label{def:rationalrepresentationhom}
    A {\em homomorphism} of $K$-rational representations $M=(M(v))_v$, $N=(N(v))_v$ is a classical homomorphism $\left(\psi_v\colon M(v)\to N(v)\right)_v$ over $L$, which is compatible with the rational structures on $M$ and $N$: For all $v\in V$ and all $\sigma\in\Gal(L/K)$, we have
    \begin{equation}
      \psi_{\sigma v}\circ\varphi_{v,\sigma}=\varphi_{v,\sigma}\circ\psi_v.
      \label{eq:koecheractiononrephom}
    \end{equation}
  \end{definition}
  
  \begin{remark}
    Semi-linearity in point (ii) means specifically
    \begin{equation}
      \varphi_{v,\sigma}(a\cdot m)=\sigma(a)\cdot\varphi_{v,\sigma}(m)
    \end{equation}
    for all $v\in V$, $m\in M(v)$, $a\in L$ and $\sigma\in\Gal(L/K)$.
  \end{remark}

  \begin{definition}
    For a $K$-rational quiver $\Gamma_K$, we define $\Rep(\Gamma_K)$ as the category of finite-dimensional $K$-rational representations of $\Gamma_K$. Moreover we define the full subcategory $\Rep^{\rm nil}(\Gamma_K)$ in $\Rep(\Gamma_K)$ consisting of finite-dimensional {\em nilpotent} $K$-rational representations of $\Gamma_K$.
  \end{definition}

  \begin{proposition}[Base change for representations]
    If $M_K$ is a (nilpotent) representation of a $K$-rational quiver $\Gamma_K$, then for any intermediate field $L/E/K$, the $E$-vector space $M_E:=E\otimes_KM_K$ carries a natural structure as a (nilpotent) representation of the base change $E\otimes_K\Gamma_K$. Base change of (nilpotent) representations is functorial.
  \end{proposition}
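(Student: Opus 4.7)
The plan is to observe that the statement is essentially a bookkeeping exercise once we unravel the definitions via Galois descent. Under Theorem \ref{thm:galoisdescent}, the data of a $K$-rational representation $M_K$ of $\Gamma_K$ is equivalent to the data of a family of $K$-vector spaces $M(v)^{\Gal(L/K)}$ for $v \in V$ together with the system of maps induced by the $\phi_e$ and the Galois actions on $V$ and $E$. Tensoring each $M(v)^{\Gal(L/K)}$ with $E$ over $K$ will yield the $E$-vector spaces underlying $M_E$, and by transitivity of Galois descent applied to the tower $L/E/K$, the resulting $E$-vector space at each vertex corresponds to the same $L$-vector space $M(v)$ equipped with the restricted semi-linear action of $\Gal(L/E) \subseteq \Gal(L/K)$.

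With this identification in hand, I would carry out the verification in three steps. First, I would check that the data $(M_E(v))_v$ together with the restricted semi-linear maps $\varphi_{v,\sigma}$ for $\sigma \in \Gal(L/E)$ and the unchanged $L$-linear maps $\phi_e$ satisfy the axioms of Definition \ref{def:rationalrepresentations} relative to the base-changed quiver $E \otimes_K \Gamma_K$: the cocycle condition \eqref{eq:semi-linearactiononrep} and the compatibility \eqref{eq:koecheractiononrep} are literal restrictions of the conditions satisfied by $M_K$, since the $\Gal(L/E)$-action on vertices and edges of $\Gamma_E$ is by definition the restriction of the $\Gal(L/K)$-action. Continuity of the restricted action is immediate.

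Second, I would verify that nilpotency is preserved: since the structure maps $\phi_e$ and their compositions along paths are literally unchanged by the restriction of the Galois action, any bound on their nilpotency on $M_K$ transfers verbatim to $M_E$.

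Third, functoriality: given a homomorphism $\psi \colon M_K \to N_K$ consisting of $L$-linear maps $\psi_v \colon M(v) \to N(v)$ satisfying \eqref{eq:koecheractiononrephom} for all $\sigma \in \Gal(L/K)$, the same family $\psi_v$ trivially satisfies the corresponding identity for all $\sigma \in \Gal(L/E)$, hence defines a homomorphism $\psi_E \colon M_E \to N_E$. Compatibility with composition and identities is evident. No step of this argument presents a real obstacle; the only conceptual point is to unambiguously identify $E\otimes_K M_K$, computed via Galois descent at the level of $K$-vector spaces, with the underlying $L$-vector spaces of $M_K$ equipped with the restricted semi-linear $\Gal(L/E)$-action, and this is exactly the content of the transitivity of Galois descent along $L/E/K$.
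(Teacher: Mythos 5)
Your proposal is correct and follows the same route as the paper, whose entire proof is ``this follows by $E$-linear extension from the definitions'': the base-changed representation is just the same family $(M(v))_v$ and the same maps $\phi_e$, with the semi-linear data restricted to $\Gal(L/E)$ acting through the restricted quiver action, so the cocycle condition, the edge compatibility, nilpotency and functoriality all restrict verbatim. One small imprecision: for a vertex $v$ not fixed by the Galois group, ``$M(v)^{\Gal(L/K)}$'' is not literally defined, since $\varphi_{v,\sigma}$ maps $M(v)$ to $M(\sigma v)$; the descent datum lives on the orbit sum $\bigoplus_{v'\in \Gal(L/K)v}M(v')$ (or on $M(v)^{G_v}$ over the fixed field of the stabilizer, as in the species construction), but this does not affect your verification, which only uses the restriction of the semi-linear structure to $\Gal(L/E)$.
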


  \begin{proof}
    This follows by $E$-linear extension from the definitions.
  \end{proof}

  \begin{remark}
    Considering everything over the absolute Galois group $\Gal(K^{\rm sep}/K)$, we see that $(\Rep(\Gamma_E))_{K\subseteq E\subseteq K^{\rm sep}}$ and $(\Rep^{\rm nil}(\Gamma_E))_{K\subseteq E\subseteq K^{\rm sep}}$ define stacks in the \'etale topology on $K$.
  \end{remark}

\subsection{Base change for Hom}

  \begin{proposition}\label{prop:homdescent}
    Let $\Gamma_K$ be a quiver with a $K$-rational structure. For $K$-rational representations $M_K$ and $N_K$ of $\Gamma_K$, we have for any intermediate field $L/E/K$ a canonical isomorphism
    \begin{equation}
      E\otimes_K\Hom_{\Gamma_K}(M_K,N_K) \cong \Hom_{\Gamma_E}(M_E,N_E)
    \end{equation}
    of finite-dimensional $E$-vector spaces.
  \end{proposition}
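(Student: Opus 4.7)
The plan is to realize $\Hom_{\Gamma_K}(M_K,N_K)$ as the $\Gal(L/K)$-invariants of a single natural $L$-vector space and then to apply Galois descent (Theorem \ref{thm:galoisdescent}) twice: once for $\Gal(L/K)$ and once for $\Gal(L/E)$.

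First I would introduce the finite-dimensional $L$-vector space
\[
H \;:=\; \Bigl\{\psi=(\psi_v)_{v\in V}\in\prod_{v\in V}\Hom_L(M(v),N(v))\,\Bigm|\, \psi_{t(e)}\circ\phi_e^M=\phi_e^N\circ\psi_{s(e)}\ \forall e\in E\Bigr\}
\]
of classical $L$-linear quiver homomorphisms (ignoring the Galois structures) and endow it with the $\Gal(L/K)$-action defined componentwise by
\[
(\sigma\cdot\psi)_v \;:=\; \varphi^N_{\sigma^{-1}v,\sigma}\circ\psi_{\sigma^{-1}v}\circ\varphi^M_{v,\sigma^{-1}}.
\]
That this is a well-defined left action, and that each $\sigma\cdot(-)$ is $\sigma$-semilinear with respect to the ambient $L$-action, is an immediate consequence of the cocycle relation \eqref{eq:semi-linearactiononrep}. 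The genuinely nontrivial step -- and the only real obstacle in the whole argument -- is to verify that the action preserves the subspace $H$ cut out by the edge-compatibility conditions: given $e\in E$, one uses $s(\sigma^{-1}e)=\sigma^{-1}s(e)$ and $t(\sigma^{-1}e)=\sigma^{-1}t(e)$ from \eqref{eq:stgaloisequivariance}, combined with the compatibility \eqref{eq:koecheractiononrep} applied to both $M$ and $N$, to rewrite $\varphi^M_{t(e),\sigma^{-1}}\circ\phi_e^M$ as $\phi_{\sigma^{-1}e}^M\circ\varphi^M_{s(e),\sigma^{-1}}$, then to apply the edge-compatibility of $\psi$ at the edge $\sigma^{-1}e$, and finally to reassemble via the analogous identity for $N$. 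Once this is done, $H$ is an $L$-vector space with a $K$-rational structure in the sense of Theorem \ref{thm:galoisdescent}.

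Next I would identify the fixed points: Definition \ref{def:rationalrepresentationhom} says exactly that $H^{\Gal(L/K)}=\Hom_{\Gamma_K}(M_K,N_K)$, and since base change to $E$ only restricts the Galois datum without altering the underlying $L$-linear structure, the same identification applied to the subgroup $\Gal(L/E)\subseteq\Gal(L/K)$ gives $H^{\Gal(L/E)}=\Hom_{\Gamma_E}(M_E,N_E)$. Galois descent then furnishes a canonical isomorphism $H\cong L\otimes_K\Hom_{\Gamma_K}(M_K,N_K)$ of $L$-vector spaces with $K$-rational structure, and taking $\Gal(L/E)$-invariants on both sides yields
\[
\Hom_{\Gamma_E}(M_E,N_E) \;=\; H^{\Gal(L/E)} \;\cong\; \bigl(L\otimes_K\Hom_{\Gamma_K}(M_K,N_K)\bigr)^{\Gal(L/E)} \;=\; E\otimes_K\Hom_{\Gamma_K}(M_K,N_K),
\]
the last equality being the standard identification $(L\otimes_K W)^{\Gal(L/E)}=E\otimes_K W$ for a $K$-vector space $W$ with trivial Galois action. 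Finite-dimensionality is inherited from $M$ and $N$. The case of an infinite Galois extension $L/K$ causes no difficulty, since the finite data of $\Gamma$, $M$, $N$ factor through a finite Galois subextension by continuity, and $L\otimes_K(-)$ commutes with the relevant colimit.
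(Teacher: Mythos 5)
Your proposal is correct and follows essentially the same route as the paper: realize the classical $L$-linear quiver homomorphisms as an $L$-vector space with a semilinear $\Gal(L/K)$-action, identify $\Hom_{\Gamma_K}(M_K,N_K)$ (resp.\ $\Hom_{\Gamma_E}(M_E,N_E)$) with the $\Gal(L/K)$- (resp.\ $\Gal(L/E)$-) invariants, and conclude by Galois descent. Your explicit verification that the action preserves the edge-compatibility conditions (via \eqref{eq:stgaloisequivariance} and \eqref{eq:koecheractiononrep}) fills in a step the paper leaves to the reader, and your index bookkeeping in the definition of the action is in fact the correct form of the formula the paper intends.
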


  \begin{proof}
    We first consider the case $E=L$. We aim to show the existence of a canonical isomorphism of $L$-vector spaces
    \[
    L\otimes_K\Hom_{\Gamma_K}(M_K,N_K) \cong \Hom_{\Gamma}(M, N),
    \]
    where $\Hom_{\Gamma}(M, N)$ denotes the space of homomorphisms of the underlying classical quiver representations over $L$.
    
    Let $G = \Gal(L/K)$. We consider the $L$-vector space $W := \Hom_{\Gamma}(M, N)$. An element $\psi \in W$ is a collection of $L$-linear maps $\psi = (\psi_v)_{v\in V}$ such that for every edge $e \in E$, we have $\psi_{t(e)}\circ \phi_{e,M} = \phi_{e,N}\circ\psi_{s(e)}$.
    
    We define a semilinear action of $G$ on $W$. For $\sigma \in G$ and $\psi \in W$, let $\psi^\sigma$ be the homomorphism with components
    \[
    (\psi^\sigma)_v := \varphi_{N,v,\sigma} \circ \psi_{\sigma^{-1}v} \circ \varphi_{M,\sigma^{-1}v,\sigma}^{-1}.
    \]
    This defines a map $(\psi^\sigma)_v\colon M(v) \to N(v)$. It is $L$-linear, because for any $\alpha \in L$, the $\sigma$-linearity of $\varphi_{N,v,\sigma}$ and $\sigma^{-1}$-linearity of $\varphi_{M,\sigma^{-1}v,\sigma}^{-1}$ ensure that for all $v\in M(v)$:
    \begin{align*}
      (\psi^\sigma)_v(\alpha\cdot v) &= \varphi_{N,v,\sigma} \circ \psi_{\sigma^{-1}v} \circ \varphi_{M,\sigma^{-1}v,\sigma}^{-1}(\alpha\cdot v) \\
      &= \varphi_{N,v,\sigma} \circ \psi_{\sigma^{-1}v} (\sigma^{-1}(\alpha)\cdot\varphi_{M,\sigma^{-1}v,\sigma}^{-1})(v) \\
      &= \varphi_{N,v,\sigma} (\sigma^{-1}(\alpha)\cdot\psi_{\sigma^{-1}v} \circ \varphi_{M,\sigma^{-1}v,\sigma}^{-1})(v) \\
      &= \sigma(\sigma^{-1}(\alpha)) \cdot (\psi^\sigma)_v(v)\\
      &= \alpha \cdot (\psi^\sigma)_v(v).
    \end{align*}
    One can verify that $\psi^\sigma$ respects the edge relations and thus $\psi^\sigma \in W$. The map $\psi \mapsto \psi^\sigma$ defines a semilinear action of $G$ on the $L$-vector space $W$.
    
    The space of $K$-rational homomorphisms $\Hom_{\Gamma_K}(M_K, N_K)$ consists of those $\psi \in W$ that are invariant under this action. An element $\psi$ is invariant if $\psi^\sigma = \psi$ for all $\sigma \in G$. The condition $\psi_v = (\psi^\sigma)_v$ for all $v$ is equivalent to
    \[
    \psi_v = \varphi_{N,v,\sigma} \circ \psi_{\sigma^{-1}v} \circ \varphi_{M,\sigma^{-1}v,\sigma}^{-1}.
    \]
    Replacing $v$ with $\sigma v$, we get $\psi_{\sigma v} = \varphi_{N,\sigma v,\sigma} \circ \psi_v \circ \varphi_{M,v,\sigma}^{-1}$, which is precisely the rationality condition from Definition \ref{def:rationalrepresentationhom} after rearranging. Thus, we have identified $\Hom_{\Gamma_K}(M_K, N_K)$ as the space of $G$-invariants $W^G$.
    
    By the theorem on Galois descent for vector spaces (cf.\ Theorem \ref{thm:galoisdescent}), the canonical map $L\otimes_K W^G \to W$ given by $\alpha\otimes w \mapsto \alpha w$ is an isomorphism. Substituting $W = \Hom_{\Gamma}(M, N)$ and $W^G = \Hom_{\Gamma_K}(M_K, N_K)$ concludes the proof in the case $E=L$.

    The general case for a general intermediate field $L/E/K$ follows from the observation that we have in the already established notation the canonical map
    \[
    \Hom_{\Gamma_K}(M_K, N_K)=W^{\Gal(L/K)} \to W^{\Gal(L/E)} =\Hom_{E\otimes_K\Gamma_K}(E\otimes_K M_K, E\otimes_K N_K)
    \]
    extends to an isomorphism over $E$, again by Galois descent.
  \end{proof}

  \section{\'Etale $K$-species}
  
  \subsection{The notion of \'etale $K$-species}

  \begin{definition}[{$K$-species (Garbiel, Dlab, Ringel)}]
    A {\em $K$-species} $S=(L_i,{}_iM_j)_{i,j\in I}$ is a finite collection $(L_i)_{i\in I}$ of division ring containing $K$ in their center, together with for each pair $i,j\in I$ an $L_i,L_j$-bimodule ${}_iM_j$, on which the left and right $K$-vector space structures agree and that ${}_iM_j$ is finite-dimensional over $K$.
  \end{definition}

  We refine this definition for our context.
  \begin{definition}[\'Etale $K$-species]
    An {\em \'etale $K$-species} is a $K$-species $S=(L_i,{}_iM_j)_{i,j\in I}$ where $L_i$ is a finite separable extension of $K$ and ${}_iM_j$ is a commutative $L_i,L_j$-bialgebra which is finite \'etale as $K$-algebra.
  \end{definition}
  
  \begin{remark}
    A representations of an \'etale $K$-species $S$ is understood to be a representation of the underlying $K$-species. We insist on the additional algebra structure on ${}_iM_j$ only to ensure that we can reconstruct the original $K$-rational quiver from the \'etale quiver associated to a $K$-rational quiver.
  \end{remark}

  \begin{definition}[Split \'etale $K$-species]
    An \'etale $K$-species $S=(L_i,{}_iM_j)_{i,j\in I}$ is {\em split}, if for all $i,j\in I$ we have $K\cong L_i$ and ${}_iM_j=\prod\limits_{\varepsilon} K$ as $K$-bialgebras.
  \end{definition}

  \begin{definition}[Morphisms of \'etale $K$-species]
    Let $S=(L_i,{}_iM_j)_{i,j\in I}$ and $T=(\widetilde{L}_i,{}_i\widetilde{M}_j)_{i,j\in \widetilde{I}}$ denote two \'etale $K$-species. A {\em morphism $f\colon S\to T$ of \'etale $K$-species} consists of the following data:
    \begin{itemize}
      \item[(i)] A map $\iota\colon I\to\widetilde{I}$,
      \item[(ii)] For each $i\in I$ a $K$-algebra homomorphism $\iota_i\colon L_i\to\widetilde{L}_{\iota(i)}$,
      \item[(iii)] For all $i,j\in I$ an $L_i,L_j$-bialgebra homomorphism $\iota_{ij}\colon {}_iM_j\to{}_{\iota(i)}\widetilde{M}_{\iota(j)}$.
    \end{itemize}
  \end{definition}

  \begin{remark}
    The resulting category of \'etale $K$-species is {\bf not} a full subcategory of the category of all $K$-species.
  \end{remark}
  
  For \'etale $K$-species we formulate
  \begin{definition}[Base change for \'etale $K$-species]\label{def:basechangeforspecies}
    Let $N/K$ denote an extension.
    \begin{itemize}
      \item[(i)] For any \'etale $K$-species $S$ as above we decompose for each $i\in I$
    \begin{equation}
      L_i\otimes_K N=\prod_{j\in J(i)}L_{j}'
      \label{eq:speciesvertexdecomposition}
    \end{equation}
    where each $L_{j}'$ is a finite separable extension of $N$ and without loss of generality the indexing sets $J(i)$ are pairwise disjoint for $i\in I$. We put
    \[
    I_N:=\bigsqcup_{i\in I}J(i)
    \]
    and write $\pi\colon I_N\to I$ for the corresponding fiber map, which maps all $j\in J(i)$ to $i$.
    \item[(ii)] For any $k,\ell\in I_N$ we define
    \[
      {}_kM_\ell':=
      L_k'\otimes_{L_{\pi(k)}}({}_{\pi(k)}M_{\pi(\ell)})\otimes_{L_{\pi(\ell)}}L_\ell'.
    \]
    \end{itemize}
    Then ${}_kM_\ell'$ is canonically an $L_k',L_\ell'$-bialgebra and we call the \'etale $N$-species $S\otimes_K N:=(L_k',{}_kM_\ell')_{k,\ell\in I_N}$ the {\em base change} of $S$ in $N/K$.
  \end{definition}

  \begin{remark}
    We remark that we have the canonical decomposition
    \begin{equation}
      N\otimes_K({}_iM_j)\otimes_K N=\bigoplus_{k\in J_i,\ell\in J_j}L_i'\otimes_K({}_iM_j)\otimes_KL_k'
      \label{eq:speciesmoduledecomposition}
    \end{equation}\
    which mirrors \eqref{eq:speciesvertexdecomposition}.
  \end{remark}

  \begin{remark}[Transitivity of base change]
    For any tower $N'/N/K$ we have a canonical identification $I_{N'}=(I_N)_{N'}$ and a canonical isomorphism
    \[
    S\otimes_K N'=(S\otimes_K N)\otimes_N N'
    \]
    of $N'$-species. The verification is straightforward. The statement is also a consequence of Proposition \ref{prop:quiverrestrictionbasechangeadjunction} below.
  \end{remark}

  \begin{remark}\label{rmk:simplealgebraspecies}
    If $D$ is a finite-dimensional division algebra over $K$ containing $K$ in its center, for any extension $L/K$ $D\otimes_K L$ is a semi-simple algebra over $L$ containing $L$ in its center. Decomposing $D\otimes_K L$ into simple $L$-algebras and invoking Artin--Wedderburn to pass to Morita equivalent division algebras over $L$, we may extend Definition \ref{def:basechangeforspecies} to base change for arbitrary $K$-species.
  \end{remark}

  \begin{definition}[Restriction for \'etale $K$-species]\label{def:restrictionforspecies}
    Let $N/K$ denote a finite extension. For any \'etale $N$-species $S$ the consider the $K$-species
    \[
    \res_{N/K}S=(\res_{N/K} L_i,\res_{N/K}{}_iM_j)_{i,j\in I},
    \]
    where $\res_{N/K} V$ denotes for any $N$-vector space $V$ the underlying $K$-vector space.
  \end{definition}

  \begin{proposition}\label{prop:speciesrestrictionbasechangeadjunction}
    For any finite separable extension $N/K$, any $N$-species $S$ and any $K$-species $S'$ we have a natural isomorphism
    \begin{equation}
      \Hom_K(S\otimes_N K,S')\cong \Hom_N(S,\res_{N/K}S').
      \label{eq:restrictionadjunction}
    \end{equation}
  \end{proposition}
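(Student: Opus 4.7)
The strategy I would follow is to reduce this adjunction to its quiver counterpart, Proposition \ref{prop:quiverrestrictionbasechangeadjunction}, via the categorical anti-equivalence between $K$-rational quivers and \'etale $K$-species established in Theorem \ref{thm:KrationalquiveretaleKspeciesantiequivalence}. Since an anti-equivalence turns Hom-sets in the species category into Hom-sets of reversed arrows in the quiver category, any adjoint pair of functors on the quiver side yields an adjoint pair on the species side, with the roles of left and right adjoint preserved.

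The first substantive step is to check that base change and restriction commute with the anti-equivalence. Recall that the anti-equivalence assigns to a $K$-rational quiver $\Gamma_K$ the species whose index set is $\Gal(L/K)\backslash V$ and whose separable extensions are $L_o := L^{H_v}$, where $H_v\subseteq\Gal(L/K)$ is the stabilizer of some $v\in o$. Quiver base change $N\otimes_K\Gamma_K$ (Definition \ref{def:quiverbasechange}) restricts the Galois action to $\Gal(L/N)$; each $\Gal(L/K)$-orbit $o$ then refines into $\Gal(L/N)$-orbits parametrized by the double cosets $H_v\backslash\Gal(L/K)/\Gal(L/N)$, and the canonical decomposition $L^{H_v}\otimes_K N\cong\prod L^{H_{v'}}$ is precisely the decomposition \eqref{eq:speciesvertexdecomposition} in Definition \ref{def:basechangeforspecies}. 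Similarly, the induced set $\Gal(L/K)\times_{\Gal(L/N)}V$ used in quiver restriction (Definition \ref{def:quiverrestriction}) has the same orbit structure under $\Gal(L/K)$ as $V$ has under $\Gal(L/N)$, and the associated extensions are literally the same fields viewed as $K$-algebras — which is exactly Definition \ref{def:restrictionforspecies}. The edge-level data is treated analogously: $\Gal(L/K)$-orbits on $E$ correspond to the bialgebras ${}_iM_j$, and the formula \eqref{eq:speciesmoduledecomposition} matches the orbit-splitting on edges induced by restricting the Galois action.

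Granted these compatibilities, the adjunction isomorphism \eqref{eq:restrictionadjunction} follows by applying Proposition \ref{prop:quiverrestrictionbasechangeadjunction} to the $K$-rational quivers corresponding to $S$ and $S'$ and translating the resulting bijection through the anti-equivalence. The main technical obstacle is verifying that the full \emph{bialgebra} structure on the modules ${}_iM_j$ (not merely the underlying bimodule structure) is preserved under all these constructions, since morphisms of \'etale $K$-species are required to respect multiplication; this is what makes the category of \'etale species genuinely equivalent (rather than just related) to the quiver category, and it must be tracked carefully through the orbit decompositions in step one. An alternative, more pedestrian route would be to unpack a morphism on either side of \eqref{eq:restrictionadjunction} into its three constituents — the map of index sets, the algebra homomorphisms between separable extensions, and the bialgebra homomorphisms between the \'etale modules — and invoke the classical adjunction between restriction and base change for finite \'etale algebras (ultimately a consequence of Galois descent, Theorem \ref{thm:galoisdescent}) pointwise. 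Both routes converge to the same verification and I would prefer the conceptual one via the anti-equivalence.
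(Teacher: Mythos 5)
Your proposal takes essentially the same route as the paper, which likewise deduces the statement from Proposition \ref{prop:quiverrestrictionbasechangeadjunction} combined with the anti-equivalence of Theorem \ref{thm:KrationalquiveretaleKspeciesantiequivalence}, by observing that the anti-equivalences commute with restriction and base change; you merely make explicit the orbit/double-coset compatibility checks that the paper leaves as an observation. One small slip: under a \emph{contravariant} equivalence the handedness of an adjunction is reversed rather than preserved (the translated statement makes species base change left adjoint to species restriction), but this does not affect the validity of translating the Hom-bijection itself.
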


  \begin{proof}
    Although the proof is elementary, it will be a consequence of Proposition \ref{prop:quiverrestrictionbasechangeadjunction} combined with Theorem \ref{thm:KrationalquiveretaleKspeciesantiequivalence} below, by observing that the given antiequivalences commute with the corresponding restriction and base change functors.
  \end{proof}

  \subsection{The \'etale $K$-species associated to a $K$-rational quiver}

    Consider an (\'etale) $K$-rational quiver $\Gamma_K$ with underlying vertex set $V$ for the Galois extension $L/K$. For notational simplicity, we write $G:=\Gal(L/K)$. We associate to $\Gamma_K$ an \'etale $K$-species $S(\Gamma_K)$ as follows.

  Let $v\in V$ be arbitrary and let $G_v\subseteq G$ denote the stabilizer of $v$. Then the conjugacy class of $G_v$ is an invariant of the orbit $Gv$ and we have an intermediate field $L(v):=L^{G_v}$ of $L/K$ which is associated to the orbit $Gv$ and unique up to $G$-conjugation. In particular, the isomorphism class of the field $L(v)$ only depends on the Galois orbit $Gv$ of $v$. By abuse of notation we put
  \begin{equation}
    L_{Gv}:=L(v).
    \label{eq:abusivedefinitionofLv}
  \end{equation}
  Then $L(G)$ is unique up to $K$-isomorphism.

  Moreover, we observe that for any edge $e\in E$ of the underlying quiver, we have 
  \[
  L(e):=L^{G_{e}},
  \]
  which provides us with a diagram
  \begin{equation}
    L(s(e))\xrightarrow\subseteq L(e)\xleftarrow\supseteq L(t(e)).
    \label{eq:edgediagram}
  \end{equation}
  Therefore $L(e)$ is an $L(Gs(e)),L(Gt(e))$-bimodule in a canonical way. Replacing $e$ with any Galois-conjugate $\sigma e$ provides us with
  \[
  L(s(\sigma e))\xrightarrow\subseteq L(\sigma e)\xleftarrow\supseteq L(t(\sigma e)),
  \]
  which in light of \eqref{eq:stgaloisequivariance} $K$-isomorphic to \eqref{eq:edgediagram}.
  
  Once again the isomorphism class of $L(e)$ with its additional structure as $L(s(e)),L(s(t))$-bimodule does not depend on the choice of representative of the Galois orbit of $e$ and by abuse of notation we put
  \begin{equation}
    L_{Ge}:=L(e).
    \label{eq:abusivedefinitionofLe}
  \end{equation}
  is unique up to $K$-ismorphism with respect to the choice of representative inside the Galois orbit of $e$.

  \begin{definition}[Species associated to a rational quiver]
    Let $\Gamma_K:=(\Gamma,\rho)$ be a quiver over $K$ with underlying quiver $\Gamma=(V,E,s,t)$. We put $I:=G\backslash V$.
    \begin{itemize}
      \item[(i)] For each $i\in I$ we use convention \eqref{eq:abusivedefinitionofLv} to define
      \[
      L_i:=L(v_i),\quad\text{for any fixed } v_i\in i.
      \]
      \item[(ii)] For each $i,j\in I$ we put $E_{ij} = \{ e \in E \mid s(e) \in i, t(e) \in j \}$ and consider the $L_i$-$L_j$-bimodule
      \[
        {}_iM_j :=\bigoplus_{\varepsilon\in G\backslash E_{ij}} L(e_\varepsilon)= \bigoplus_{Ge \in G\backslash E_{ij}} L_{Ge_\varepsilon}.
      \]
      in the sense of \eqref{eq:abusivedefinitionofLe} for any fixed representatives $e_\varepsilon\in\varepsilon$.
    \end{itemize}
    We call $S(\Gamma_K):=(L_i,{}_iM_j)_{i,j\in I}$ the {\em (\'etale) $K$-species associated to $\Gamma_K$}.
  \end{definition}

  \subsection{The $K$-rational quiver associated to an \'etale $K$-species}
  
  Let conversely $S=(L_i,{}_iM_j)_{i,j\in I}$ be an \'etale $K$-species. We will associate to $S$ a $K$-rational quiver $\Gamma_K(S)$ as follows.

  Let $L$ denote a splitting field for the collection of the finitely many separable extensions $L_i/K$ for $i\in I$ and the finite \'etale commutative $K$-algebras ${}_iM_j$, $i,j\in I$. We may assume that $L/K$ is finite and Galois with Galois group $G=\Gal(L/K)$. We put
  \[
  V := \bigsqcup_{i\in I}\Hom_K(L_i,L).
  \]
  Then $V$ is a left $G$-set and each subset $\Hom_{K}(L_i,L)$ corresponds to a Galois orbit in $V$. We formulate this observation structurally.

  \begin{proposition}\label{prop:Viorbits}
  The fibers of the canonical map
  \[
  \pi\colon V\to I,\;\sigma\mapsto i,\text{ if }\sigma\in\Hom_K(L_i,L),
  \]
  are in canonical bijection with the $G$-orbits in $V$.
  \end{proposition}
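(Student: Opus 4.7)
The plan is to exhibit the $G$-action on $V$ explicitly, observe that it preserves the fibers of $\pi$, and then prove that $G$ acts transitively on each fiber. The canonical bijection between fibers and orbits then drops out automatically.

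First I would note that $G$ acts on $V$ by post-composition: for $\sigma\in G$ and an embedding $\phi\in\Hom_K(L_i,L)$, we set $\sigma\cdot\phi:=\sigma\circ\phi$. Since each $\sigma\in G$ fixes $K$ pointwise, $\sigma\circ\phi$ is again a $K$-algebra homomorphism $L_i\to L$, so it lies in the same fiber $\Hom_K(L_i,L)$ as $\phi$. Hence $\pi$ is $G$-equivariant for the trivial action on $I$, and every $G$-orbit in $V$ is contained in a unique fiber of $\pi$. This produces a canonical well-defined map from orbits to fibers.

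Second, I would show that $G$ acts transitively on each fiber $\Hom_K(L_i,L)$. By construction, $L/K$ is a finite Galois splitting field for all the finitely many finite separable extensions $L_i/K$, so in particular $\Hom_K(L_i,L)$ is non-empty, and the image $\phi(L_i)\subseteq L$ of any $\phi\in\Hom_K(L_i,L)$ is a subfield whose normal closure in $L$ is contained in $L$. Given two embeddings $\phi_1,\phi_2\in\Hom_K(L_i,L)$, the map $\phi_2\circ\phi_1^{-1}\colon\phi_1(L_i)\to\phi_2(L_i)$ is a $K$-algebra isomorphism between subfields of $L$; by the standard extension theorem for isomorphisms of subfields of a Galois extension, it extends to an element $\sigma\in G=\Gal(L/K)$, and then $\sigma\circ\phi_1=\phi_2$, i.e.\ $\sigma\cdot\phi_1=\phi_2$. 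This is the only technical point, and it is entirely classical Galois theory.

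Combining the two steps, each fiber $\pi^{-1}(i)=\Hom_K(L_i,L)$ is a single $G$-orbit, and distinct fibers are (trivially) disjoint unions of orbits. Therefore the canonical map sending each orbit to the unique fiber containing it is a bijection, as claimed. The main (and only) obstacle is the transitivity statement in the second step, which reduces to the standard fact that any two $K$-embeddings of a finite separable extension into a Galois extension of $K$ are conjugate by an element of the Galois group.
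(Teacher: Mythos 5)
Your proof is correct, and it fills in exactly the standard Galois-theoretic argument that the paper leaves implicit (the proposition is stated there as an immediate observation, with no written proof): the key point in both cases is the classical fact that $G=\Gal(L/K)$ acts transitively on $\Hom_K(L_i,L)$ for a finite separable $L_i/K$ split by $L$. Nothing is missing.
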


  Write $\Specm{}_iM_j$ for the set of maximal ideals in ${}_iM_j$ and for $\varepsilon\in\Specm{}_iM_j$ put
  \[
  L_{\varepsilon}:={}_iM_j/\varepsilon.
  \]
  Then we have a canonical decomposition
  \begin{equation}
    {}_iM_j=\prod_{\varepsilon\in\Specm{}_iM_j}L_\varepsilon.
  \end{equation}
  Over $L$ we have 
  \begin{equation}
    {}_iM_j\otimes_{L_j} L=\bigoplus_{\varepsilon\in\Specm{}_iM_j}
    \prod_{\tau}L_\varepsilon\otimes_{\sigma,L_{\varepsilon},\tau}L,
    \label{eq:etalebialgebradecomposition}
  \end{equation}
  canonically as $L_i,L$-bialgebra, where for each $\varepsilon$ the pair $(\sigma,\tau)$ runs through the set \(\Hom_{L_j}(L_{\varepsilon},L)\). We define
  \[
  E:=\bigsqcup_{i,j\in I}\bigsqcup_{\varepsilon\in\Specm{}_iM_j}\Hom_{K}(L_{\varepsilon},L).
  \]
  Note that $E$ carries a canonical left $G$ action and we have
  \begin{proposition}\label{prop:iMjorbits}
  The fibers of the canonical map
  \[
  p\colon E\to \bigsqcup_{i,j\in I}\Specm{}_iM_j=:\mathcal{I},\;
  \sigma\mapsto \varepsilon\in \Specm{}_iM_j,\text{ if }\sigma\in\Hom_{K}(L_{\varepsilon},L),
  \]
  are in canonical bijection with the $G$-orbits in $E$.
  \end{proposition}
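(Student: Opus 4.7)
The plan is to mimic the strategy behind Proposition \ref{prop:Viorbits}, reducing the statement to the classical fact from Galois theory that $\Gal(L/K)$ acts transitively on $K$-embeddings of any finite separable subextension of $L/K$. Concretely, I would prove that (a) the map $p$ is $G$-invariant, so each $G$-orbit in $E$ lies in a single fiber of $p$, and (b) each fiber of $p$ is a single $G$-orbit; combining (a) and (b) yields the asserted canonical bijection.

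For (a), I would first note that the hypothesis that ${}_iM_j$ is a finite \'etale $K$-algebra forces each quotient $L_\varepsilon = {}_iM_j/\varepsilon$ (with $\varepsilon \in \Specm{}_iM_j$) to be a finite separable field extension of $K$. Since $L/K$ was chosen as a splitting field for this finite collection of extensions, every $K$-embedding $L_\varepsilon \hookrightarrow K^{\rm sep}$ factors through $L$, so $\Hom_K(L_\varepsilon, L)$ really is the set of all $K$-embeddings. The left $G$-action on $E$ is post-composition: $\sigma \cdot \tau := \sigma \circ \tau$ for $\sigma \in G$ and $\tau \in \Hom_K(L_\varepsilon, L)$. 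This action clearly fixes the component indexed by $\varepsilon$, hence $p(\sigma \cdot \tau) = p(\tau)$, and each $G$-orbit is contained in a fiber.

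For (b), given two embeddings $\tau_1, \tau_2 \in \Hom_K(L_\varepsilon, L)$ in the same fiber, I would extend the $K$-isomorphism $\tau_2 \circ \tau_1^{-1}\colon \tau_1(L_\varepsilon) \to \tau_2(L_\varepsilon)$ between subfields of $L$ to an element $\sigma \in \Gal(L/K)$; this is possible because $L/K$ is Galois, hence normal, so any $K$-embedding of a subfield into $L$ extends to an automorphism of $L$. Then $\sigma \circ \tau_1 = \tau_2$, proving transitivity of $G$ on the fiber.

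There is no real obstacle here: the entire content is the standard Galois-theoretic transitivity statement, and the only ingredient specific to this setting is the separability of each $L_\varepsilon$, which is supplied by the \'etale hypothesis on ${}_iM_j$. If anything requires care, it is merely bookkeeping to check that the $G$-action described via the decomposition \eqref{eq:etalebialgebradecomposition} really coincides with the post-composition action on $\Hom_K(L_\varepsilon, L)$; this is immediate from the way \eqref{eq:etalebialgebradecomposition} was derived.
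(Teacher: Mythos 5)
Your proof is correct and follows exactly the argument the paper implicitly relies on: the paper states Proposition \ref{prop:iMjorbits} (like Proposition \ref{prop:Viorbits}) without proof, treating it as the standard Galois-theoretic fact that $\Gal(L/K)$ acts transitively on $\Hom_K(L_\varepsilon,L)$ for each separable quotient $L_\varepsilon$ of the \'etale algebra ${}_iM_j$, with $G$-invariance of $p$ being immediate from the post-composition action. Your two steps (orbits lie in fibers; fibers are single orbits via extending $\tau_2\circ\tau_1^{-1}$ to an automorphism of the normal extension $L/K$) supply precisely this reasoning.
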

  
  The source and target maps $s,t\colon E\to V$ are defined as follows. For each $\varepsilon$ we have canonical embeddings $\tau_\varepsilon\colon L_i\to L_{\varepsilon}$ and $\sigma_\varepsilon\colon L_j\to L_{\varepsilon}$. Then for $\tau\in\Hom_K(L_\varepsilon,L)$ we put
  \begin{equation}
    s(\tau):=\tau\circ\tau_{\varepsilon}\in \Hom_K(L_i,L)\subseteq V,
    \label{eq:tauespilon}
  \end{equation}
  and
  \begin{equation}
  t(\tau):=\tau\circ\sigma_{\varepsilon}\in \Hom_K(L_j,L)\subseteq V.
    \label{eq:sigmaespilon}
  \end{equation}
  Then $s,t$ are evidently $G$-equivariant and we may formulate
  \begin{definition}[{$K$-rational quiver associated to an \'etale $K$-species}]
    Let $S=(V_i,{}_iM_j)_{i,j\in I}$ denote an \'etale $K$-species. Then we call
    \[
    \Gamma_K(S):=(V,E,s,t)
    \]
    with $V,E,s,t$ as above together with its canonical $\Gal(L/K)$-module structure the {\em $K$-rational quiver associated to $S$}.
  \end{definition}

  \begin{theorem}\label{thm:KrationalquiveretaleKspeciesantiequivalence}
    The maps
    \[
    S(-)\colon \{\text{\rm $K$-rational quivers}\}\to\{S\;\text{\rm \'etale $K$-species}\},
    \]
    \[
    \Gamma_K(-)\colon \{\text{\rm \'etale $K$-species}\}\to\{\text{\rm $K$-rational quivers}\},
    \]
    extend to quasi-inverse contravariant functors of the underlying categories, inducing anti-equivalences of the underlying full subcategories of split objects.
  \end{theorem}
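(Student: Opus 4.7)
The plan is to reduce the theorem to Grothendieck's reformulation of Galois theory, namely the anti-equivalence between the category of finite continuous $\Gal(K^{\rm sep}/K)$-sets and the category of finite \'etale $K$-algebras: if the action factors through $G=\Gal(L/K)$ for a finite Galois extension $L/K$, this anti-equivalence is realized by $X\mapsto\prod_{Gx\in G\backslash X}L^{G_x}$ and $A\mapsto\Hom_K(A,L)$. Transitive $G$-sets correspond to finite separable field extensions, disjoint unions correspond to direct products, and $G$-equivariant maps correspond contravariantly to $K$-algebra homomorphisms.

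First I would interpret a $K$-rational quiver $\Gamma_K$ as the diagram $E\rightrightarrows V$ in finite $G$-sets and apply the above anti-equivalence vertex-wise and edge-wise. Vertices produce the family $(L_i)_{i\in I}$ with $L_i=L^{G_{v_i}}$; after decomposing $E$ by orbit pair $(Gs(e),Gt(e))\in I\times I$ into $\bigsqcup_{i,j}E_{ij}$ and further into $G$-orbits, edges produce the bimodules ${}_iM_j$. The $G$-equivariant maps $s,t\colon E\to V$ dualize under Galois theory to the $K$-algebra homomorphisms $L_i\to{}_iM_j$ and $L_j\to{}_iM_j$ that constitute the bialgebra structure, so $S(\Gamma_K)$ is well-defined and contravariantly functorial in $\Gamma_K$. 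Going the other way, $\Gamma_K(S)$ reads off the $G$-sets $V=\bigsqcup_i\Hom_K(L_i,L)$ and, via the decomposition \eqref{eq:etalebialgebradecomposition}, $E=\bigsqcup_{i,j,\varepsilon}\Hom_K(L_\varepsilon,L)$, with source and target maps \eqref{eq:tauespilon}, \eqref{eq:sigmaespilon} coming from the embeddings $\tau_\varepsilon,\sigma_\varepsilon$ which encode the bialgebra structure of ${}_iM_j$.

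The natural isomorphisms $\Gamma_K\cong\Gamma_K(S(\Gamma_K))$ and $S\cong S(\Gamma_K(S))$ would then follow from the canonical identification $\Hom_K(L^H,L)\cong G/H$ of $G$-sets, applied vertex-wise and edge-wise, together with the uniqueness up to canonical isomorphism of the orbit-wise decompositions. Finally, the statement about split objects reduces to the observation that $\rho$ is trivial if and only if $G_v=G$ for every vertex and $G_e=G$ for every edge, which under Galois duality is precisely the condition that all $L_i$ equal $K$ and all ${}_iM_j$ are products of copies of $K$, that is, that the associated \'etale $K$-species is split. Hence the anti-equivalence restricts to one between the respective subcategories of split objects.

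The main obstacle will be the bookkeeping required to verify that the canonical embeddings $\tau_\varepsilon$ and $\sigma_\varepsilon$ arising from the \'etale bialgebra decomposition \eqref{eq:etalebialgebradecomposition} dualize precisely to the original $G$-equivariant source and target maps, and that the identification is independent, up to canonical isomorphism, of the choices of orbit representatives $v_i\in i$ and $e_\varepsilon\in\varepsilon$. While these statements are formally consequences of Grothendieck's Galois theory applied to the underlying diagrams of $G$-sets, the careful parsing of the indexing sets and the tracking of the left and right module structures on the ${}_iM_j$ is where the routine but extensive verification lies.
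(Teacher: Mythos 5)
Your proposal is correct and is essentially the paper's own argument: the paper likewise rests on the Galois duality between finite $G$-sets and finite \'etale $K$-algebras, merely unwinding it explicitly by constructing the isomorphisms $\Gamma_K\cong\Gamma_K(S(\Gamma_K))$ and $S\cong S(\Gamma_K(S))$ via orbit representatives (i.e.\ the identification $\Hom_K(L^{H},L)\cong G/H$), checking compatibility with $s,t$ and treating the split case exactly as you outline. The only difference is packaging: you invoke Grothendieck's formulation as a black box applied to the diagram $E\rightrightarrows V$ of $G$-sets, while the paper carries out by hand the orbit-and-representative bookkeeping that you flag as the remaining routine verification.
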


  \begin{proof}
    The proof consists of two main parts. First, we show that for any $K$-rational quiver $\Gamma_K$, the canonical map $\Gamma_K \to \Gamma_K(S(\Gamma_K))$ is an isomorphism. Second, we show that for any \'etale $K$-species $S$, the canonical map $S \to S(\Gamma_K(S))$ is an isomorphism. Throughout the proof, we let $G = \Gal(L/K)$ with $L$ sufficiently big such that all actions are defined as actions of $\Gal(L/K)$ and all occuring finite \'etale algebras split over $L$. Alternatively, we may choose $L=K^{\rm sep}$ at the expense of working with an infinite Galois extension throughout.

    \paragraph{Step 1: $\Gamma_K(S(\Gamma_K)) \cong \Gamma_K$.}\ 

    \smallskip
    Let $\Gamma_K = (V,E,s,t)$ be a $K$-rational quiver with its $G$-action. We apply the functor $S(-)$ to obtain the \'etale $K$-species $S(\Gamma_K) = (L_i, {}_iM_j)_{i,j \in I}$. Recall that $I=G\backslash V$ is the set of $G$-orbits of vertices. For each orbit $i\in I$, we fix a representative $v_i \in i$, so $L_i=L(v_i)=L^{G_{v_i}}$. For any two orbits $i,j \in I$, the bimodule ${}_iM_j$ is defined as
    \[
      {}_iM_j = \bigoplus\limits_{\varepsilon \in G\backslash E_{ij}} L(e_\varepsilon),
    \]
    where $E_{ij}=\{e\in E \mid s(e)\in i, t(e)\in j\}$, and for each orbit $\varepsilon=Ge \in G\backslash E_{ij}$, we choose a representative $e_\varepsilon \in \varepsilon$, so $L(e_\varepsilon) = L^{G_{e_\varepsilon}}$.

    Now we apply the functor $\Gamma_K(-)$ to $S(\Gamma_K)$ to obtain a new $K$-rational quiver $\Gamma'_K = (V',E',s',t')$. We must construct a $G$-equivariant isomorphism $(\Phi_V, \Phi_E): \Gamma_K \to \Gamma'_K$.

    \smallskip
    \noindent{\bf Natural bijection on the vertices:} By definition,
    \[
    V' = \bigsqcup\limits_{i\in I} \Hom_K(L_i, L).
    \]
    For a given vertex $v \in V$, let $i=Gv$ be its orbit. We have $v=\tau v_i$ for some $\tau \in G$. We define a map $\Phi_V\colon V \to V'$ by sending $v$ to the $K$-algebra homomorphism $\phi_v \in \Hom_K(L_i,L)$ given by $\phi_v(\alpha) = \tau \alpha$ for all $\alpha\in L_i = L(v_i)$. This map is well-defined, since if $v=\tau'v_i$ for another $\tau'\in G$, then $\tau'=\tau \eta$ for some $\eta\in G_{v_i}$, and for $\alpha\in L(v_i)$, $\tau'(\alpha) = \tau \eta \alpha = \tau \alpha$. The map $\Phi_V$ is a bijection of $G$-sets. The inverse maps an embedding $\phi \in \Hom_K(L(v_i),L)$ to $\tau v_i$, where $\tau\in G$ is any element such that $\phi(\alpha)=\tau \alpha$ for all $\alpha\in L(v_i)$. Such a $\tau\in G$ exists and is unique up to left multiplication by an element of $G_{v_i}$. The $G$-equivariance is readily checked: for $\sigma\in G$, $\Phi_V(\sigma v)=\Phi_V(\sigma \tau v_i)$ is the map $\alpha \mapsto \sigma \tau \alpha$, which is precisely $\sigma \Phi_V(v)$.

    \smallskip
    \noindent{\bf Natural bijection on the edges:} By definition,
    \[
    E' = \bigsqcup\limits_{i,j\in I}\bigsqcup\limits_{\varepsilon'\in\Specm({}_iM_j)}\Hom_{K}(L_{\varepsilon'},L).
    \]
    The set of maximal ideals $\Specm({}_iM_j)$ of
    \[
      {}_iM_j = \bigoplus\limits_{\varepsilon \in G\backslash E_{ij}} L(e_\varepsilon)
    \]
    is in natural bijection with the set of edge orbits $G\backslash E_{ij}$. For an orbit $\varepsilon \in G\backslash E_{ij}$, the corresponding residue field is $L_{\varepsilon'} = L(e_\varepsilon)$. Thus, we can write
    \[
    E' = \bigsqcup\limits_{i,j\in I}\bigsqcup_{\varepsilon \in G\backslash E_{ij}} \Hom_{K}(L(e_\varepsilon),L).
    \]
    This is the disjoint union over all edge orbits in $G\backslash E$.

    For an edge $e\in E$, let $\varepsilon=Ge$ be its orbit. We have $e=\sigma e_\varepsilon$ for some $\sigma\in G$. We define $\Phi_E\colon E \to E'$ by sending $e$ to the $K$-algebra homomorphism $\phi_e \in \Hom_{K}(L(e_\varepsilon),L)$ given by $\phi_e(\alpha)=\sigma \alpha$. As with vertices, this map is well-defined and provides a $G$-equivariant bijection $E \to E'$.

    \smallskip
    \noindent{\bf Compatibility with source and target maps:} We must check that $\Phi_V(s(e)) = s'(\Phi_E(e))$ and $\Phi_V(t(e)) = t'(\Phi_E(e))$. Let $e=\sigma e_\varepsilon \in E$ for $\sigma\in G$, with $s(e_\varepsilon)\in i=Gv_i$ and $t(e_\varepsilon)\in j=Gv_j$. We may choose $v_i=s(e_\varepsilon)$ and $v_j=t(e_\varepsilon)$.
    
    The source map $s'$ for $\Gamma'_K$ is defined as $s'(\phi_e) = \phi_e \circ \tau_\varepsilon$, where $\tau_\varepsilon\colon L_i \to L(e_\varepsilon)$ is the structure map, which in this context is the inclusion $L(s(e_\varepsilon))\hookrightarrow L(e_\varepsilon)$.
So, for $\alpha\in L_i = L(s(e_\varepsilon))$, we have $s'(\phi_e)(\alpha) = \phi_e(\alpha) = \sigma \alpha$.

    On the other hand, $s(e) = s(\sigma e_\varepsilon)=\sigma s(e_\varepsilon)$. Thus $\Phi_V(s(e))$ is the map $\alpha \mapsto \sigma \alpha$. Hence $\Phi_V(s(e)) = s'(\Phi_E(e))$. The argument for $t$ is identical. This concludes the proof that $\Gamma_K(S(\Gamma_K))\cong \Gamma_K$.

    \paragraph{Step 2: $S(\Gamma_K(S)) \cong S$.}\ 

    \smallskip
Let $S=(L_i, {}_iM_j)_{i,j\in I}$ be an \'etale $K$-species. We construct the $K$-rational quiver $\Gamma_K(S)=(V,E,s,t)$. Then we apply $S(-)$ to obtain $S' = S(\Gamma_K(S))=(L'_{i'},{}_{i'}M'_{j'})_{i',j'\in I'}$. We need to construct an isomorphism of \'etale $K$-species from $S$ to $S'$.

    \smallskip
    \noindent{\bf Natural bijection on the index sets:} The index set of $S'$ is $I' = G\backslash V$. By construction,
    \[
    V=\bigsqcup\limits_{i\in I}\Hom_K(L_i,L).
    \]
    Each set $\Hom_K(L_i,L)$ is a single $G$-orbit in $V$. Thus, there is a natural bijection $\iota\colon I \to I'$ where $\iota(i)$ is the orbit $\Hom_K(L_i,L)$ (cf. Propositition \ref{prop:Viorbits}).

    \smallskip
    \noindent{\bf Natural isomorphisms of the fields:} For $i'=\iota(i)\in I'$, the field $L'_{i'}$ is $L^{G_{v'}}$ for a chosen representative $v' \in i'$. Let us choose $v'\colon L_i \to L$ to be an embedding. The stabilizer $G_{v'}$ consists of all $\sigma\in G$ such that $\sigma\circ v' = v'$. By Galois theory, the fixed field $L^{G_{v'}}$ is exactly the image $v'(L_i)$, which is $K$-isomorphic to $L_i$ via $v'$. Thus, we have $K$-algebra isomorphisms $\iota_i\colon L_i \to L'_{i'}$ for all $i$.

    \smallskip
    \noindent{\bf Natural isomorphisms of the bimodules (bialgebras):} For $i'=\iota(i), j'=\iota(j) \in I'$, the bimodule is ${}_{i'}M'_{j'} = \bigoplus_{\varepsilon'' \in G\backslash E_{i'j'}} L(e_{\varepsilon''})$.
    
    The set of edges $E_{i'j'}$ consists of all $\tau\in E$ with $s(\tau)\in i'$ and $t(\tau)\in j'$. By construction of
    \[
    E = \bigsqcup\limits_{k,l\in I}\bigsqcup\limits_{\varepsilon\in\Specm({}_kM_l)}\Hom_K(L_\varepsilon,L),
    \]
    an edge $\tau\in E$ has source in $\Hom_K(L_k,L)$ and target in $\Hom_K(L_l,L)$. Thus $E_{i'j'}$ consists of edges constructed from the bimodules ${}_iM_j$, i.e.,
    \[
    E_{i'j'} = \bigsqcup\limits_{\varepsilon\in\Specm({}_iM_j)}\Hom_K(L_\varepsilon,L).
    \]
    The $G$-orbits $\varepsilon''$ in $E_{i'j'}$ are precisely the sets $\Hom_K(L_\varepsilon,L)$ for $\varepsilon \in\Specm({}_iM_j)$.
    
    For such an orbit $\varepsilon''$, we pick a representative $e_{\varepsilon''}$, which is an embedding $\tau\colon L_\varepsilon \to L$. The field $L(e_{\varepsilon''}) = L^{G_{\tau}}$ is isomorphic to $L_\varepsilon$.
    
    The $L'_{i'},L'_{j'}$-bimodule structure on $L(e_{\varepsilon''}) \cong L_\varepsilon$ is induced by the maps $s(e_{\varepsilon''})$ and $t(e_{\varepsilon''})$. These are $s(\tau) = \tau\circ\tau_\varepsilon \in i'$ and $t(\tau) = \tau\circ\sigma_\varepsilon \in j'$, where $\tau_\varepsilon: L_i\to L_\varepsilon$ and $\sigma_\varepsilon: L_j\to L_\varepsilon$ are the algebra structure maps. This structure corresponds exactly to the original $L_i,L_j$-bimodule structure on $L_\varepsilon$. Therefore,
    \[
      {}_{i'}M'_{j'} = \bigoplus_{\varepsilon \in \Specm({}_iM_j)} L_\varepsilon.
    \]
    Since ${}_iM_j$ is a finite \'etale $K$-algebra, it decomposes into a product of fields
    \[
      {}_iM_j = \prod\limits_{\varepsilon \in \Specm({}_iM_j)} L_\varepsilon.
    \]
    For a finite index set, the direct sum and direct product of bimodules coincide. Thus we have an isomorphism of $L_i,L_j$-bimodules (and $K$-bialgebras) $\iota_{ij}\colon {}_iM_j \to {}_{i'}M'_{j'}$.
    
    This collection of maps $(\iota, (\iota_i)_i, (\iota_{ij})_{i,j})$ defines an isomorphism of \'etale $K$-species $S \to S(\Gamma_K(S))$.
    
    \paragraph{Functoriality and split objects:} The constructions are natural. A morphism of quivers induces a morphism of species in the opposite direction and vice versa by the well known anti-equivalence between $G$-sets and finite commutative \'etale $K$-algebras split over $L$.
    
    If a quiver $\Gamma_K$ is split over $K$, its $G$-action is trivial. Then $G_v=G$ for all $v\in V$ and $G_e=G$ for all $e\in E$. Consequently, $L_i=L^G=K$ and $L(e_\varepsilon)=K$ for all $i$ and $\varepsilon$. The resulting species $S(\Gamma_K)$ is a split \'etale $K$-species. Conversely, starting from a split species $S$ (with all $L_i=K$ and ${}_iM_j$ a product of copies of $K$), the resulting quiver $\Gamma_K(S)$ will have trivial $G$-action on its vertices and edges, making it a split $K$-rational quiver. The anti-equivalence thus restricts to an anti-equivalence between the full subcategories of split objects.
  \end{proof}

  \subsection{Examples}

    In the following examples we choose $G=\Gal(\QQ[\sqrt{-1}]/\QQ)$, but one may also choose $G=\Gal(\CC/\RR)$ or any other quadratic extension $L/K$ for that matter. We denote by $c\in G$ the non-trivial element.

    \begin{example}\label{ex:twoloops}
    We consider the restriction of the quiver
    \begin{center}
            \hspace*{2em}\begin{tikzpicture}
              \node (v1) at (1,0) {$\diamond$};
              \draw[loopstyle] (v1) to [loop left] node[midway, left, draw=none] {$a$} (v1);
            \end{tikzpicture}
    \end{center}
    in the extension $\QQ[\sqrt{-1}]/\QQ$ in the sense of Definition \ref{def:quiverrestriction}. This is the disconnected quiver 
    \begin{center}
            \hspace*{2em}\begin{tikzpicture}
                \node (v1) at (-0.25,0) {$-$};
                \node (v2) at (2.25,0) {$+$};
                \draw[loopstyle] (v1) to [loop left] node[midway, left, draw=none] {$a_-$} (v1);
                \draw[loopstyle] (v2) to [loop right] node[midway, right, draw=none] {$a_+$} (v2);
            \end{tikzpicture}
    \end{center}
    endowed the following action of $G$, i.\,e.\ the non-trivial element $c\in G$ acts via:
    \[
    c\pm :=\mp,\quad c a_\pm:=\mp.
    \]
    The vertices $+$ and $-$ lie in the same orbit $\{+,-\}$ and $a_+$ and $a_-$ lie in one orbit $\{a_+,a_-\}$. Hence we have $I=\{+,-\}$. The stabilizers of vertices and edges are trivial in all cases, so that the associated \'etale $\QQ$-species is given by
    \begin{center}
            \hspace*{2em}\begin{tikzpicture}
              \node (v1) at (1,0) {$\QQ[\sqrt{-1}]=L_{\{\pm\}}\quad$};
                \draw[loopstyle] (v1) to [loop right] node[midway, right, draw=none] {${}_{\{\pm\}}M_{\{\pm\}}=\QQ[\sqrt{-1}]$} (v1);
            \end{tikzpicture}
    \end{center}
    Here the $\QQ[\sqrt{-1}],\QQ[\sqrt{-1}]$-bimodule
    \[
    {}_{\{\pm\}}M_{\{\pm\}}=\QQ[\sqrt{-1}]
    \]
    carries the canonical bimodule structure given by left and right multiplication.
    \end{example}

    \begin{example}\label{ex:cyclic}
    We consider the cyclic quiver $Q_{\rm cyclic}$:
    \begin{center}
            \begin{tikzpicture}
                \node (v1) at (-0.25,0) {$-$};
                \node (v2) at (2.25,0) {$+$};

                \draw[arr] (v1) to [bend left=20] node[midway, above, draw=none] {$a$} (v2);
                \draw[arr] (v2) to [bend left=20] node[midway, below, draw=none] {$b$} (v1);
            \end{tikzpicture}
    \end{center}
    and endow it with an action of by defining:
    \[
    c\pm :=\mp,\quad c a:=b,\quad cb:=a.
    \]
    Then $+$ and $-$ lie in the same orbit $\{+,-\}$ and $a$ and $b$ lie in one orbit $\{a,b\}$. Again we have $I=\{+,-\}$ and again the stabilizers of vertices and edges are trivial in all cases. However, in this case
    \[
    {}_{\{\pm\}}M_{\{\pm\}}=\QQ[\sqrt{-1}]\otimes_{\QQ[\sqrt{-1}],\tau}\QQ[\sqrt{-1}]=:\QQ[\sqrt{-1}]^{\rm twisted},
    \]
    is isomorphic to $\QQ[\sqrt{-1}]$ with a non-trivial bimodule structure: One of the two sides acts via a complex conjugate action, while the other side acts with the standard action.

    The associated \'etale $\QQ$-species is given by
    \begin{center}
            \hspace*{2em}\begin{tikzpicture}
                \node (v1) at (1,0) {$\QQ[\sqrt{-1}]=L_{\{\pm\}}\quad$};
                \draw[loopstyle] (v1) to [loop right] node[midway, right, draw=none] {${}_{\{\pm\}}M_{\{\pm\}}=\QQ[\sqrt{-1}]^{\rm twisted}$} (v1);
            \end{tikzpicture}
    \end{center}
  \end{example}

  \begin{example}\label{ex:gelfand}
    We consider the Gelfand quiver $Q_{\rm Gelfand}$, which is the quiver with relation
    \begin{center}
    \begin{tikzpicture}[baseline=(current bounding box.center)]
    \node (A) {$-$};
    \node (B) [right=6em of A] {$\star$};
    \node (C) [right=6em of B] {$+$};
    \node (D) [right=2em of C] {$a_{-} b_{-} = a_{+} b_{+}$};
    \draw [arr, bend left=20] (A) to node [midway, above] {$a_{-}$} (B);
    \draw [arr, bend left=20] (B) to node [midway, below] {$b_{-}$} (A);
    \draw [revarr, bend left=20] (B) to node [midway, above] {$a_{+}$} (C);
    \draw [revarr, bend left=20] (C) to node [midway, below] {$b_{+}$} (B);
    \end{tikzpicture}
    \end{center}
    We define the $G$-action as:
    \[
    c\pm :=\mp,\quad c\star := \star,\quad c a_{\pm}:=a_{\mp},\quad cb_{\pm}:=b_{\mp}.
    \]
    This action of $c$ is compatible with the given relation $a_{-} b_{-} = a_{+} b_{+}$ and thus defines a $\QQ$-structure (or $\RR$-structure or $K$-structure) on the Gelfand quiver. Moreover, the relation is a consequence of $G$-equivariance and hence may be dropped when considering representations over $\QQ$ (resp.\ $\RR$ or $K$).

    Because of $G\pm=\{+,-\}$ and $G\star = \{\star\}$, the resulting \'etale $\QQ$-species has associated field extensions
    \begin{align*}
    L(\star)&=\QQ[\sqrt{-1}]^{G_\star}=\QQ[\sqrt{-1}]^{G}=\QQ,\\
    L(\pm)&=\QQ[\sqrt{-1}]^{G_{\pm}}=\QQ[\sqrt{-1}]^{\{\bf1\}}=\QQ[\sqrt{-1}].
    \end{align*}
    Likewise, for the two arrows $a_\pm$ we have
    \begin{align*}
    L(a_\pm)&=\QQ[\sqrt{-1}]^{G_{a_\pm}}=\QQ[\sqrt{-1}]^{\{\bf1\}}=\QQ[\sqrt{-1}],\\
    L(b_\pm)&=\QQ[\sqrt{-1}]^{G_{b_\pm}}=\QQ[\sqrt{-1}]^{\{\bf1\}}=\QQ[\sqrt{-1}].
    \end{align*}
    The index set is $I=\{\{\star\},\{+,-\}\}$. Therefore, the associated \'etale $\QQ$-species is
    \begin{center}
    \begin{tikzpicture}[baseline=(current bounding box.center)]
    \node (Mstar) {$\QQ=L_{\{\star\}}$};
    \node (M+)    [right=4em of Mstar] {\quad\quad$L_{\{\pm\}}=\QQ[\sqrt{-1}]$};
    \draw [arr, bend right=20] (Mstar) to node [below] {$\QQ[\sqrt{-1}]$} (M+);
    \draw [arr, bend right=20] (M+) to node [above] {$\QQ[\sqrt{-1}]$} (Mstar);
    \end{tikzpicture}
    \end{center}
    Here the $\QQ,\QQ[\sqrt{-1}]$- and $\QQ[\sqrt{-1}],\QQ$-bimodules
    \[
      {}_{\{\star\}}M_{\{\pm\}}=\QQ[\sqrt{-1}],\quad
      {}_{\{\pm\}}M_{\{\star\}}=\QQ[\sqrt{-1}],
    \]
    carry the natural bimodule structures. The $c$-twisted structures would provide us with isomorphic $K$-species.

  To illustrate Theorem \ref{thm:KrationalquiveretaleKspeciesantiequivalence} in this case, observe that
  \begin{align*}
  \Hom_\QQ(L_{\{\star\}}\,\QQ[\sqrt{-1}])&=
  \Hom_\QQ(\QQ,\QQ[\sqrt{-1}])=\{\iota\colon\QQ\to\QQ[\sqrt{-1}]\},\\[0.5em]
  \Hom_\QQ(L_{\{\pm\}},\QQ[\sqrt{-1}])&=
  \Hom_\QQ(\QQ[\sqrt{-1}],\QQ[\sqrt{-1}])=G=\{{\bf1},c\},
  \end{align*}
  Hence, the set of vertices of $\Gamma_K(S(Q_{\rm Gelfand}))$ is
  \[
  V=\{\iota,{\bf1},c\}\;\cong\;\{\star,+,-\},
  \]
  and the edge set is given by
  \begin{align*}
  E&=
  \Hom_{\QQ}(L_{\{\star\}\to\{\pm\}},\QQ[\sqrt{-1}])\sqcup
  \Hom_{\QQ}(L_{\{\pm\}\to\{\star\}},\QQ[\sqrt{-1}])\\
  &=
  \Hom_{\QQ}(\QQ[\sqrt{-1}],\QQ[\sqrt{-1}])\sqcup
  \Hom_{\QQ}(\QQ[\sqrt{-1}],\QQ[\sqrt{-1}])\\
  &=G\sqcup G.
  \end{align*}
  The elements of $G\sqcup G$ give rise to the four directed edges of the Gelfand quiver. We obtain source and targets by precomposition with $\iota$ and $\sigma\in G$. Which composition is the source and which composition is the target depends on the direction of the arrow between $\{\star\}$ and $\{\pm\}$.
  \end{example}

  \subsection{Representations of \'etale $K$-species}

  \begin{definition}[Representation of a $K$-species]
  Let $S=(L_i, {}_iM_j)_{i,j\in I}$ be a $K$-species. A \emph{representation} of $S$ is a pair $(W, f)$ consisting of a collection $W = (W_i)_{i \in I}$ of finite-dimensional right $L_i$-vector spaces, and a collection $f = (f_{ij})_{i,j \in I}$ of $L_j$-linear maps
  \[
    f_{ij} \colon W_i \otimes_{L_i} {}_iM_j \to W_j.
  \]
  A \emph{morphism} $\psi \colon (W, f) \to (W', f')$ between two representations of $S$ is a collection of $L_i$-linear maps $\psi_i \colon W_i \to W'_i$ such that for all $i,j \in I$, the following diagram commutes:
  \begin{center}
  \begin{tikzpicture}[>=Stealth]
  \matrix (m) at (0,0) [matrix of math nodes, row sep=3em, column sep=4em]
  {
    W_i \otimes_{L_i} {}_iM_j & W_j \\
    W'_i \otimes_{L_i} {}_iM_j & W'_j \\
  };
  \path[-Stealth] (m-1-1) edge node[above] {$f_{ij}$} (m-1-2);
  \path[-Stealth] (m-1-1) edge node[left] {$\psi_i \otimes {\bf1}$} (m-2-1);
  \path[-Stealth] (m-1-2) edge node[right] {$\psi_j$} (m-2-2);
  \path[-Stealth] (m-2-1) edge node[below] {$f'_{ij}$} (m-2-2);
  \end{tikzpicture}
  \end{center}
  The category of finite-dimensional representations of $S$ is denoted by $\Rep(S)$.
  \end{definition}

  \begin{remark}
      Since the fields $L_i$ are commutative, we may identify left and right vector spaces, but we maintain the distinction for clarity in tensor products. We consider ${}_iM_j$ as left $L_i$ and as a right $L_j$-vector space.
  \end{remark}

  \begin{definition}[Base change for representations of \'etale $K$-species]
    Let $S=(L_i, {}_iM_j)_{i,j\in I}$ be a $K$-species and $W=(W_i,f_{ij})_{i,j\in I}$ a representation of $S$. Write $S\otimes_K N=(L_i',{}_iM_j')_{i,j\in I_N}$ for the base change of $S$ inside an extension $N/K$.
    \begin{itemize}
    \item[(i)] For any $i\in I$ and any $k\in J(i)\subseteq I_N$ put
      \[W_k':=W_i\otimes_{L_i}L_k'.\]
    \item[(ii)] For any $i,j\in I$ and any $k\in J(i)$ and $\ell\in J(\ell)$ put
      \[
      f_{k\ell}':=f_{ij}\otimes{\bf1}_{L_k'}\colon\;
      W_k'\otimes_{L_k'}{}_kM_\ell'=
      (W_i\otimes_{L_i}{}_iM_j)\otimes_{L_j} L_\ell'\to
      W_j\otimes_{L_j} L_\ell'=W_\ell'.
      \]
    \end{itemize}
    Then $W\otimes_KN:=(W_k',f_{k\ell}')_{k,\ell\in I_N}$ is a representation of $S\otimes_K N$ which we call the {\em base change} of $W$.
  \end{definition}

  \begin{remark}
    In light of Remark \ref{rmk:simplealgebraspecies} this notion of base change naturally extends to representations of $K$-species.
  \end{remark}

  \subsection{Categorical equivalence of representations}
  
  We construct a pair of quasi-inverse functors
  \[
  F \colon \Rep(\Gamma_K) \to \Rep(S(\Gamma_K))\quad\text{and}\quad H \colon \Rep(S(\Gamma_K)) \to \Rep(\Gamma_K).
  \]

  \paragraph{The functor $F \colon \Rep(\Gamma_K) \to \Rep(S(\Gamma_K))$.}
  Let
  \[
  M_K = ((M(v))_{v\in V}, (\varphi_{v,\sigma})_{v\in V,\sigma\in G}, (\phi_e)_{e\in E})
  \]
  be a $K$-rational representation of $\Gamma_K$. In order to define its image $F(M_K) = (W_i, f_{ij})_{i,j\in I}$ in $\Rep(S(\Gamma_K))$, we define the $L_i$-right vector spaces $W_i$ first. 

  \smallskip
  For each orbit $i \in I = G\backslash V$, we choose a representative $v_i \in i$ and define the $L_i$-vector space $W_i := M(v_i)^{G_{v_i}}$. The semi-linear action of $G_{v_i}$ on the $L$-vector space $M(v_i)$ makes the space of invariants $M(v_i)^{G_{v_i}}$ into a vector space over the fixed field $L_i = L^{G_{v_i}}$. By Galois descent (Thm.~\ref{thm:galoisdescent}), we have a canonical isomorphism of $L$-vector spaces
    \begin{equation}
      W_i\otimes_{L_i} L\cong M(v_i).
      \label{eq:WitoMvi}
    \end{equation}

    \smallskip
    The definition of the maps $f_{ij}$ is more involved. For each orbit $\varepsilon=Ge \in E/G$, we choose a representative $e_\varepsilon \in E$ and assume
    \[
    L_{\varepsilon}=L(e_\varepsilon).
    \]
    Write $i,j\in I$ for the $G$-orbits of $s(e_\varepsilon)$ and $t(e_\varepsilon)$ respectively. We fix $\sigma_\varepsilon,\tau_\varepsilon\in G$ with
    \begin{equation}
      s(e_\varepsilon)=\sigma_\varepsilon v_i\quad\text{and}\quad
      t(e_\varepsilon)=\tau_\varepsilon v_j.
      \label{eq:endsofedgetranslations}
    \end{equation}
    Then $L(e_\varepsilon)$ is an $L(v_i),L(v_j)$-bimodule via $(\sigma_\varepsilon,\tau_\varepsilon)$.

    \begin{proposition}
      For any right $L(e_\varepsilon)$-vector space $V$ we have a canonical isomorphism of $L$-vector spaces
    \begin{equation}
      V\otimes_{\tau_\varepsilon,L(v_j)}L
      =
      \prod_{\begin{subarray}c\eta G_{\tau_\varepsilon^{-1}e_\varepsilon}\in\\G_{v_j}/G_{\tau_\varepsilon^{-1}e_\varepsilon}\end{subarray}} V\otimes_{L(e_\varepsilon),\eta\tau_\varepsilon^{-1}}L.
      \label{eq:Leepsilondecomposition}
    \end{equation}
    On pure tensors, this isomorphism is given by
    \(
    v\otimes a\mapsto (v\otimes a)_{\eta\in G_{v_j}}.
    \)
    \end{proposition}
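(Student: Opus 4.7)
The plan is to reduce to the universal case $V = L(e_\varepsilon)$ by writing $V = V \otimes_{L(e_\varepsilon)} L(e_\varepsilon)$ and invoking associativity of the tensor product:
\[
V \otimes_{\tau_\varepsilon, L(v_j)} L \;\cong\; V \otimes_{L(e_\varepsilon)} \bigl(L(e_\varepsilon) \otimes_{\tau_\varepsilon, L(v_j)} L\bigr).
\]
Once a bialgebra decomposition of $L(e_\varepsilon) \otimes_{\tau_\varepsilon, L(v_j)} L$ as a product over the index set $G_{v_j}/G_{\tau_\varepsilon^{-1}e_\varepsilon}$ is established, with the $\eta$-factor having left $L(e_\varepsilon)$-action via $\eta\tau_\varepsilon^{-1}$, tensoring through with $V$ over $L(e_\varepsilon)$ yields the claim, and a pure tensor $v \otimes w$ is sent in each component to $v \otimes w \in V \otimes_{L(e_\varepsilon), \eta\tau_\varepsilon^{-1}} L$ by construction.

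The heart of the proof is thus a Galois-theoretic classification of the $L(v_j)$-algebra embeddings $\phi\colon L(e_\varepsilon) \to L$, where $L(e_\varepsilon)$ is regarded as an $L(v_j)$-algebra via $\tau_\varepsilon$ and $L$ via the canonical inclusion $L(v_j) = L^{G_{v_j}} \hookrightarrow L$. Each such $\phi$ extends to some $\tilde\phi \in G$, and the compatibility $\phi \circ \tau_\varepsilon = \mathrm{incl}_{L(v_j)}$ translates to $\tilde\phi \tau_\varepsilon \in G_{v_j}$, i.e.\ $\tilde\phi \in G_{v_j}\tau_\varepsilon^{-1}$. Two extensions $h_1\tau_\varepsilon^{-1}$ and $h_2\tau_\varepsilon^{-1}$ with $h_i \in G_{v_j}$ restrict to the same embedding of $L(e_\varepsilon) = L^{G_{e_\varepsilon}}$ exactly when they differ by an element of $G_{e_\varepsilon}$ on the right, which after a short computation is equivalent to $h_1^{-1}h_2 \in \tau_\varepsilon^{-1} G_{e_\varepsilon} \tau_\varepsilon = G_{\tau_\varepsilon^{-1}e_\varepsilon}$. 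Using that $G_{e_\varepsilon} \subseteq G_{t(e_\varepsilon)} = \tau_\varepsilon G_{v_j}\tau_\varepsilon^{-1}$ (which follows from $t(e_\varepsilon) = \tau_\varepsilon v_j$), one sees that $G_{\tau_\varepsilon^{-1}e_\varepsilon}$ is indeed a subgroup of $G_{v_j}$. Hence the embeddings are in canonical bijection with $G_{v_j}/G_{\tau_\varepsilon^{-1}e_\varepsilon}$, a coset $\eta G_{\tau_\varepsilon^{-1}e_\varepsilon}$ corresponding to the map $x \mapsto \eta\tau_\varepsilon^{-1}(x)$.

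To conclude, I would invoke the standard fact that for a finite separable extension $F/k$ and a field $L$ containing enough roots, $F \otimes_k L$ is a finite \'etale $L$-algebra whose spectrum is canonically the set of $k$-algebra embeddings $F \to L$, each giving a factor $L$ via $x \otimes w \mapsto \phi(x)w$. Applied to $F = L(e_\varepsilon)$ (which is separable over $L(v_j)$ since $L/K$ is separable) and using Step 2, this produces the desired product decomposition, and matching constants confirms that the $\eta$-factor realizes the twist $\eta\tau_\varepsilon^{-1}$ as asserted.

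The only non-routine ingredient is the Galois-theoretic bookkeeping in the middle step: one must carefully handle that $\tau_\varepsilon$ is an automorphism of $L$ rather than a fixed inclusion, so that the extension $L(e_\varepsilon)/L(v_j)$ induced by $\tau_\varepsilon$ is twisted relative to the Galois correspondence, and this twist is precisely what produces the shape $\eta\tau_\varepsilon^{-1}$ for the parameterizing embeddings.
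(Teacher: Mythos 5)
Your argument is correct and follows essentially the same route as the paper: decompose $L(e_\varepsilon)\otimes_{\tau_\varepsilon,L(v_j)}L$ as an \'etale algebra into factors indexed by the $L(v_j)$-linear embeddings into $L$, identify those embeddings with the cosets $G_{v_j}/G_{\tau_\varepsilon^{-1}e_\varepsilon}$ via Galois theory, and then tensor with $V$ over $L(e_\varepsilon)$. The only cosmetic difference is that you classify $\tau_\varepsilon$-twisted embeddings of $L(e_\varepsilon)$ directly, whereas the paper phrases the same count in terms of $L(v_j)$-linear embeddings of $L(\tau_\varepsilon^{-1}e_\varepsilon)$; these agree after conjugating by $\tau_\varepsilon$, and your coset bookkeeping matches the paper's.
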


    \begin{proof}
    We have the canonical bimodule decomposition
    \begin{equation}
      L(e_\varepsilon)\otimes_{\tau_\varepsilon,L(v_j)}L
      =
      \prod_{\tau\colon L(\tau_\varepsilon^{-1}e_\varepsilon)\to L}
      L(e_\varepsilon)\otimes_{L(e_\varepsilon),\tau\tau_\varepsilon^{-1}}L,
      \label{eq:Leepsilondecompositionpre}
    \end{equation}
    where $\tau$ runs through all $L(v_j)$-linear embeddings (cf.\ \eqref{eq:endsofedgetranslations}).

    Writing $\iota_{\varepsilon}\colon L(\tau_\varepsilon^{-1}e_\varepsilon)\to L$ for the canonical inclusion corresponding to the inclusion $G_{\tau_\varepsilon^{-1}e_\varepsilon}\to G$, each $L(v_j)$-linear $\tau$ is of the form
    \[
    \tau=\eta\iota_{\varepsilon},\quad\text{for a unique coset }\eta G_{\tau_\varepsilon^{-1}e_\varepsilon} \in G_{v_j}/G_{\tau_\varepsilon^{-1}e_\varepsilon}.
    \]
    Hence \eqref{eq:Leepsilondecompositionpre} implies that for any right $L(e_\varepsilon)$-vector space $V$ we have \eqref{eq:Leepsilondecomposition} and the map has the given description.
    \end{proof}

    \begin{proposition}\label{prop:WiiMjLpre}
      We have a canonical isomorphism of $L$-vector spaces
    \begin{equation}
    W_i \otimes_{L_i} {}_iM_j\otimes_{L_j}L=
    \bigoplus_{\varepsilon\in G\backslash E_{ij}}
    \prod_{\eta}
    M(v_i)^{G_{v_i}} \otimes_{L(v_i),\eta\tau_\varepsilon^{-1}\sigma_\varepsilon}L,
    \label{eq:WiiMjLpre}
    \end{equation}
    where $\eta$ runs through a system of representatives for $G_{v_j}/G_{\tau_\varepsilon^{-1}e_\varepsilon}$.
    \end{proposition}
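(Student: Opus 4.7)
The plan is to combine the direct-sum decomposition of ${}_iM_j$ with repeated applications of the formula \eqref{eq:Leepsilondecomposition} from the preceding proposition, and then rewrite the result via the tautological identification $W_i = M(v_i)^{G_{v_i}}$.

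First, I would distribute the outer tensor products over the direct-sum decomposition
\[
{}_iM_j = \bigoplus_{\varepsilon \in G\backslash E_{ij}} L(e_\varepsilon)
\]
to obtain
\[
W_i \otimes_{L_i} {}_iM_j \otimes_{L_j} L = \bigoplus_{\varepsilon \in G\backslash E_{ij}} W_i \otimes_{L_i,\sigma_\varepsilon} L(e_\varepsilon) \otimes_{\tau_\varepsilon,L_j} L,
\]
where, by \eqref{eq:endsofedgetranslations}, the left $L_i$-action on $L(e_\varepsilon)$ is induced by $\sigma_\varepsilon$ and the right $L_j$-action by $\tau_\varepsilon$.

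Next, I would apply the previous proposition to each summand with $V := W_i \otimes_{L_i,\sigma_\varepsilon} L(e_\varepsilon)$, viewed as a right $L(e_\varepsilon)$-vector space through the right tensor factor. This gives
\[
W_i \otimes_{L_i,\sigma_\varepsilon} L(e_\varepsilon) \otimes_{\tau_\varepsilon,L_j} L = \prod_\eta \bigl( W_i \otimes_{L_i,\sigma_\varepsilon} L(e_\varepsilon) \bigr) \otimes_{L(e_\varepsilon),\eta\tau_\varepsilon^{-1}} L,
\]
with $\eta$ running through representatives of $G_{v_j}/G_{\tau_\varepsilon^{-1}e_\varepsilon}$. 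Collapsing the middle factor via $L(e_\varepsilon) \otimes_{L(e_\varepsilon),\eta\tau_\varepsilon^{-1}} L \cong L$ and composing the embeddings, each factor simplifies to $W_i \otimes_{L_i,\eta\tau_\varepsilon^{-1}\sigma_\varepsilon} L$, and substituting $W_i = M(v_i)^{G_{v_i}}$ yields the right-hand side of \eqref{eq:WiiMjLpre}.

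The only real difficulty is bookkeeping: one has to check that the left $L_i$-action on $L$ obtained from the chain $L_i \xrightarrow{\sigma_\varepsilon} L(e_\varepsilon) \xrightarrow{\eta\tau_\varepsilon^{-1}} L$ composes precisely to $\eta\tau_\varepsilon^{-1}\sigma_\varepsilon$. This is immediate from the definitions of $\sigma_\varepsilon$, $\tau_\varepsilon$ and of the inclusion $\iota_\varepsilon$ used in the previous proposition, and naturality of all the identifications then produces the stated description of the isomorphism on pure tensors. No additional input (beyond distributivity of tensor products over direct sums and \eqref{eq:Leepsilondecomposition}) is required.
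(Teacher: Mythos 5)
Your proposal is correct and follows essentially the same route as the paper: the paper likewise first distributes the tensor product over the decomposition ${}_iM_j=\bigoplus_{\varepsilon}L(e_\varepsilon)$ to get summands $M(v_i)^{G_{v_i}}\otimes_{L(v_i),\sigma_\varepsilon}L(e_\varepsilon)\otimes_{\tau_\varepsilon,L(v_j)}L$ and then applies \eqref{eq:Leepsilondecomposition} to each, exactly as you do. Your bookkeeping of the composite embedding $\eta\tau_\varepsilon^{-1}\sigma_\varepsilon$ fills in the only detail the paper leaves implicit.
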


    \begin{proof}
    Recalling the definition of ${}_iM_j$ we rewrite
    \[
    W_i \otimes_{L_i} {}_iM_j\otimes_{L_j}L=
    \bigoplus_{\varepsilon\in G\backslash E_{ij}}
    M(v_i)^{G_{v_i}} \otimes_{L(v_i),\sigma_\varepsilon} L(e_\varepsilon)\otimes_{\tau_\varepsilon,L(v_j)}L.
    \]
    In light of the decomposition \eqref{eq:Leepsilondecomposition}, identity \eqref{eq:WiiMjLpre} follows.
    \end{proof}

    \begin{proposition}\label{prop:generalextensionofscalars}
      For any $v\in V$ and any $\sigma\in G$ the given semilinar map $\varphi_{v,\sigma}\colon M(v)\to M(\sigma v)$ induces a map $M(v)^{G_{v}}\to M(\sigma v)$ which extends to a unique $L$-linear map
    \begin{equation}
      M(v)^{G_{v}}\otimes_{L(v),\sigma}L\to M(\sigma v),\;m\otimes a\mapsto \varphi_{v,\sigma}(m)\cdot a,
      \label{eq:generalextensionisomorphism}
    \end{equation}
    which is an isomorphism of $L$-vector spaces. Moreover, by the same universal property, we have for any $\tau\in G$ a commutative square
    \begin{equation}
      \begin{tikzpicture}[>=Stealth]
        \matrix (m) at (0,0) [matrix of math nodes, row sep=3em, column sep=6.5em]
                {
                  M(v)^{G_{v}}\otimes_{L(v),\sigma}L & M(v)^{G_{v}}\otimes_{L(v),\tau\sigma}L \\
                  M(\sigma v) & M(\tau\sigma v) \\
                };
                \path[-Stealth] (m-1-1) edge node[font=\scriptsize, above] {$m\otimes a\mapsto m\otimes\tau(a)$} (m-1-2);
                \path[-Stealth] (m-1-1) edge (m-2-1);
                \path[-Stealth] (m-1-2) edge (m-2-2);
                \path[-Stealth] (m-2-1) edge node[below] {$\varphi_{\sigma v,\tau}$} (m-2-2);
      \end{tikzpicture}
      \label{eq:semi-linearsquare}
    \end{equation}
    \end{proposition}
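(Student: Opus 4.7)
The plan is to reduce the isomorphism \eqref{eq:generalextensionisomorphism} to Galois descent (Theorem \ref{thm:galoisdescent}) and to deduce the commutativity of \eqref{eq:semi-linearsquare} from the cocycle condition \eqref{eq:semi-linearactiononrep}. The main technical point is recognizing that the twisted tensor product $\otimes_{L(v),\sigma}L$ is precisely designed to absorb the $\sigma$-semilinearity of $\varphi_{v,\sigma}$; once this is observed everything becomes essentially formal.

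First I will observe that restricting $\varphi_{v,\sigma}$ to the $L(v)$-subspace $M(v)^{G_v}$ yields an additive map into $M(\sigma v)$ which, by $\sigma$-semilinearity, satisfies $\varphi_{v,\sigma}(\alpha m)=\sigma(\alpha)\varphi_{v,\sigma}(m)$ for all $\alpha\in L(v)$. This matches exactly the defining relation $\alpha m\otimes a=m\otimes\sigma(\alpha)a$ of the twisted tensor product, so by the universal property the rule $m\otimes a\mapsto\varphi_{v,\sigma}(m)\cdot a$ extends uniquely to an $L$-linear map \eqref{eq:generalextensionisomorphism}. To show it is an isomorphism, I will apply Galois descent to the finite-dimensional $L$-vector space $M(v)$ equipped with its semilinear $G_v$-action, which yields $\dim_{L(v)}M(v)^{G_v}=\dim_L M(v)=\dim_L M(\sigma v)$, the last equality holding because $\varphi_{v,\sigma}$ is a semilinear bijection. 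Surjectivity then follows from the identity $\beta\cdot\varphi_{v,\sigma}(m)=\varphi_{v,\sigma}(\sigma^{-1}(\beta)m)$ for $\beta\in L$, which shows that the image equals $\varphi_{v,\sigma}(L\cdot M(v)^{G_v})=\varphi_{v,\sigma}(M(v))=M(\sigma v)$, where I invoke Galois descent once more to identify $L\cdot M(v)^{G_v}$ with $M(v)$. Matching finite $L$-dimensions forces injectivity.

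For the commutative square \eqref{eq:semi-linearsquare}, I will evaluate both compositions on a pure tensor $m\otimes a$. The down-then-right path produces $\varphi_{\sigma v,\tau}(\varphi_{v,\sigma}(m)\cdot a)=\tau(a)\cdot\varphi_{\sigma v,\tau}(\varphi_{v,\sigma}(m))=\tau(a)\cdot\varphi_{v,\tau\sigma}(m)$, where the first equality uses $\tau$-semilinearity of $\varphi_{\sigma v,\tau}$ and the second uses the cocycle condition \eqref{eq:semi-linearactiononrep}. The right-then-down path produces $m\otimes\tau(a)\mapsto\varphi_{v,\tau\sigma}(m)\cdot\tau(a)$, which agrees. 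The only nontrivial bookkeeping is to check that the top horizontal map $m\otimes a\mapsto m\otimes\tau(a)$ is well-defined, which reduces to the identity $\tau(\sigma(\alpha)a)=(\tau\sigma)(\alpha)\tau(a)$ for $\alpha\in L(v)$. The hardest part of the whole argument is genuinely only this: keeping the twisting exponents on the successive tensor products straight. No deeper ingredient than Galois descent and the cocycle identity enters.
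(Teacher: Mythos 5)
Your proposal is correct and follows essentially the same route as the paper: the twisted tensor product's universal property gives existence and uniqueness of \eqref{eq:generalextensionisomorphism}, Galois descent for $M(v)$ over $L/L(v)$ (with its semilinear $G_v$-action) gives the isomorphism, and the cocycle condition \eqref{eq:semi-linearactiononrep} yields the commutativity of \eqref{eq:semi-linearsquare} by evaluation on pure tensors. Your surjectivity-plus-dimension-count argument is fine in the paper's finite-dimensional setting (one could equally factor the map as the twist $m\otimes a\mapsto m\otimes\sigma^{-1}(a)$ followed by the descent isomorphism and $\varphi_{v,\sigma}$, which avoids counting dimensions), and your well-definedness check for the top horizontal map is exactly the needed bookkeeping.
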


    \begin{proof}
      Existence and uniqueness of \eqref{eq:generalextensionisomorphism} follows by the universalm property of the left hand side. Galois descent shows that \eqref{eq:generalextensionisomorphism} is an isomorphism of $L$-vector spaces. The commutativity of \eqref{eq:semi-linearsquare} follows by the universal property of $M(v)^{G_{v}}\otimes_{L(v),\sigma}L$.
    \end{proof}

    \begin{corollary}\label{cor:WiiMjL}
    We have canonical $L$-linear isomorphisms
    \begin{equation}
      \psi_{i,j,\varepsilon,\eta}\colon M(v_i)^{G_{v_i}} \otimes_{L(v_i),\eta\tau_\varepsilon^{-1}\sigma_\varepsilon}L\to M(\eta\tau_\varepsilon^{-1}\sigma_\varepsilon).
      \label{eq:Llinearisos}
    \end{equation}
    In particular, we have for all $i,j\in I$ a canonical isomorphism
    \begin{equation}
    W_i \otimes_{L_i} {}_iM_j\otimes_{L_j}L=
    \bigoplus_{\varepsilon\in G\backslash E_{ij}}
    \prod_{\eta}
    M(\eta\tau_\varepsilon^{-1}\sigma_\varepsilon v_i).
    \label{eq:WiiMjL}
    \end{equation}
    \end{corollary}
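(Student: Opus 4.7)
The plan is to assemble Corollary \ref{cor:WiiMjL} as a direct composition of Propositions \ref{prop:WiiMjLpre} and \ref{prop:generalextensionofscalars}; no new ingredients are needed, the work lies in matching up the indices.

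First, I would construct the isomorphisms $\psi_{i,j,\varepsilon,\eta}$ by invoking Proposition \ref{prop:generalextensionofscalars} with $v := v_i$ and $\sigma := \eta\tau_\varepsilon^{-1}\sigma_\varepsilon \in G$. The proposition directly produces a canonical $L$-linear isomorphism
\[
M(v_i)^{G_{v_i}}\otimes_{L(v_i),\eta\tau_\varepsilon^{-1}\sigma_\varepsilon}L\;\xrightarrow{\;\sim\;}\;M(\eta\tau_\varepsilon^{-1}\sigma_\varepsilon v_i),\qquad m\otimes a\mapsto \varphi_{v_i,\eta\tau_\varepsilon^{-1}\sigma_\varepsilon}(m)\cdot a,
\]
which I take as the definition of $\psi_{i,j,\varepsilon,\eta}$. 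Before doing so I would verify that the right-hand side is independent of the choice of representative $\eta$ of the coset $\eta G_{\tau_\varepsilon^{-1}e_\varepsilon}\in G_{v_j}/G_{\tau_\varepsilon^{-1}e_\varepsilon}$: since elements of $G_{\tau_\varepsilon^{-1}e_\varepsilon}$ fix the source $\tau_\varepsilon^{-1}\sigma_\varepsilon v_i$ of the edge $\tau_\varepsilon^{-1}e_\varepsilon$, the vertex $\eta\tau_\varepsilon^{-1}\sigma_\varepsilon v_i$ only depends on the coset, and at the level of the tensor product on the left-hand side the $L(v_i)$-linearity of $\eta\tau_\varepsilon^{-1}\sigma_\varepsilon$ is also unchanged up to canonical identification (the commutative square \eqref{eq:semi-linearsquare} applied to $\tau\in G_{\tau_\varepsilon^{-1}e_\varepsilon}$ making the diagram coherent).

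Second, to obtain \eqref{eq:WiiMjL}, I would plug these isomorphisms into the decomposition of Proposition \ref{prop:WiiMjLpre}. That proposition already provides
\[
W_i \otimes_{L_i} {}_iM_j\otimes_{L_j}L \;\cong\; \bigoplus_{\varepsilon\in G\backslash E_{ij}}\prod_{\eta} M(v_i)^{G_{v_i}} \otimes_{L(v_i),\eta\tau_\varepsilon^{-1}\sigma_\varepsilon}L,
\]
and applying $\psi_{i,j,\varepsilon,\eta}$ summand-wise yields the claimed identification with $\bigoplus_{\varepsilon}\prod_{\eta}M(\eta\tau_\varepsilon^{-1}\sigma_\varepsilon v_i)$.

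I expect no genuine obstacle: the content of the corollary is essentially bookkeeping, recording that the decomposition \eqref{eq:WiiMjLpre} transports along the semi-linear Galois action to a decomposition indexed by the vertices in the relevant $G$-orbits. The only place where a little care is required is the well-definedness check above, where one must confirm that the choice of coset representative $\eta$ is innocuous; this follows directly from the defining property of the stabilizer $G_{\tau_\varepsilon^{-1}e_\varepsilon}\subseteq G_{\tau_\varepsilon^{-1}\sigma_\varepsilon v_i}$ together with the cocycle condition \eqref{eq:semi-linearactiononrep}.
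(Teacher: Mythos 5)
Your proposal is correct and is essentially identical to the paper's proof, which simply applies Proposition \ref{prop:generalextensionofscalars} with $\sigma=\eta\tau_\varepsilon^{-1}\sigma_\varepsilon$ to each summand of the decomposition in Proposition \ref{prop:WiiMjLpre}. The extra well-definedness check on the coset representative $\eta$ is a harmless (and correct) addition that the paper leaves implicit.
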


    \begin{proof}
       We apply Proposition \ref{prop:generalextensionofscalars} to each summand on the right hand side of \eqref{eq:WiiMjLpre}.
    \end{proof}
    
    The identification \eqref{eq:WiiMjL} gives rise to the following $L(v_j)$-rational structure on $W_i \otimes_{L_i} {}_iM_j\otimes_{L_j}L$. For any $\sigma\in \Gal(L/L(v_j))=G_{v_j}$ we consider the map
    \begin{align*}
    \Theta_{ij}(\sigma)\colon\;
    \bigoplus_{\varepsilon\in G\backslash E_{ij}}
    \prod_{\eta\in G_{v_j}}
    M(\eta\tau_\varepsilon^{-1}\sigma_\varepsilon v_i)
    &\to
    \bigoplus_{\varepsilon\in G\backslash E_{ij}}
    \prod_{\eta\in G_{v_j}}
    M(\eta\tau_\varepsilon^{-1}\sigma_\varepsilon v_i)\\
    (m_{\varepsilon,\eta})_{\varepsilon,\eta}&\mapsto
    \left(\varphi_{\sigma^{-1}\eta\tau_\varepsilon^{-1}\sigma_\varepsilon v_i,\sigma}(m_{\varepsilon,\sigma^{-1}\eta})\right)_{\varepsilon,\eta}.
    \end{align*}
    This map is clearly $\sigma$-linear and satisfies the cocycle condition
    \[
    \forall\sigma,\tau\in G_{v_j}:\quad\Theta_{ij}(\sigma)\circ\Theta_{ij}(\tau)=\Theta_{ij}(\sigma\tau).
    \]
    Hence we obtain an $L(v_j)$-rational structure on $W_i \otimes_{L_i} {}_iM_j\otimes_{L_j}L$ as claimed.

    \begin{proposition}\label{prop:rationalstructures}
      The $L(v_j)$-rational structure $\Theta_{ij}$ on $W_i \otimes_{L_i} {}_iM_j\otimes_{L_j}L$ agrees with the canonical $L(v_j)$-rational structure
      \[
      W_i \otimes_{L_i} {}_iM_j\;\subseteq\; W_i \otimes_{L_i} {}_iM_j\otimes_{L_j}L.
      \]
    \end{proposition}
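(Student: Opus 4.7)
My plan is to reduce the claim to verifying that the canonical $L_j$-rational subspace $W_i \otimes_{L_i} {}_iM_j$ is pointwise fixed by the $\Theta_{ij}$-action. Indeed, by Galois descent (Theorem \ref{thm:galoisdescent}), any $L_j$-rational structure on the $L$-vector space $W_i \otimes_{L_i} {}_iM_j \otimes_{L_j} L$ is uniquely determined by its $L_j$-subspace of invariants, since the invariants span over $L$ and the semi-linear action on $a \cdot u$ (with $u$ invariant) must be $\sigma(a)\cdot u$. Both the canonical $L_j$-structure and $\Theta_{ij}$ yield invariant subspaces of $L_j$-dimension equal to $\dim_L(W_i \otimes_{L_i} {}_iM_j \otimes_{L_j} L)$, so it suffices to show the containment $W_i \otimes_{L_i} {}_iM_j \subseteq (W_i \otimes_{L_i} {}_iM_j \otimes_{L_j} L)^{\Theta_{ij}}$; equality then follows by dimension counting.

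By $L$-linearity and the direct sum decomposition of ${}_iM_j$, it suffices to check fixedness on pure tensors of the form $w \otimes m \otimes 1$ where $w \in W_i = M(v_i)^{G_{v_i}}$ and $m \in L(e_\varepsilon) \subseteq {}_iM_j$ for some $\varepsilon \in G\backslash E_{ij}$. Chasing through the identifications that produced \eqref{eq:WiiMjL} — namely applying first the decomposition \eqref{eq:Leepsilondecomposition}, which sends $w \otimes m \otimes 1$ to the tuple with $\eta$-component $w \otimes \eta\tau_\varepsilon^{-1}(m) \in W_i \otimes_{L(v_i),\eta\tau_\varepsilon^{-1}\sigma_\varepsilon} L$, and then the isomorphism \eqref{eq:generalextensionisomorphism} of Proposition \ref{prop:generalextensionofscalars} — we find that the image of $w \otimes m \otimes 1$ in $\bigoplus_\varepsilon \prod_\eta M(\eta\tau_\varepsilon^{-1}\sigma_\varepsilon v_i)$ is the tuple
\[
M_{\varepsilon,\eta} := \varphi_{v_i,\eta\tau_\varepsilon^{-1}\sigma_\varepsilon}(w)\cdot \eta\tau_\varepsilon^{-1}(m).
\]
Here we use that the sources $\tau_\varepsilon^{-1}\sigma_\varepsilon v_i = s(\tau_\varepsilon^{-1}e_\varepsilon)$ are fixed by $G_{\tau_\varepsilon^{-1}e_\varepsilon}$, so the expression $M_{\varepsilon,\eta}$ is independent of the choice of representative $\eta$ inside its coset in $G_{v_j}/G_{\tau_\varepsilon^{-1}e_\varepsilon}$.

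Now apply $\Theta_{ij}(\sigma)$ for $\sigma \in G_{v_j}$ and compute the $\eta$-component:
\[
\varphi_{\sigma^{-1}\eta\tau_\varepsilon^{-1}\sigma_\varepsilon v_i,\sigma}(M_{\varepsilon,\sigma^{-1}\eta}) = \varphi_{\sigma^{-1}\eta\tau_\varepsilon^{-1}\sigma_\varepsilon v_i,\sigma}\bigl(\varphi_{v_i,\sigma^{-1}\eta\tau_\varepsilon^{-1}\sigma_\varepsilon}(w)\cdot \sigma^{-1}\eta\tau_\varepsilon^{-1}(m)\bigr).
\]
Using the $\sigma$-linearity of $\varphi_{\cdot,\sigma}$ the scalar $\sigma^{-1}\eta\tau_\varepsilon^{-1}(m)$ passes through as $\eta\tau_\varepsilon^{-1}(m)$, and the remaining composition of semi-linear maps collapses to $\varphi_{v_i,\eta\tau_\varepsilon^{-1}\sigma_\varepsilon}(w)$ by the cocycle condition \eqref{eq:semi-linearactiononrep}. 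Hence the $\eta$-component of $\Theta_{ij}(\sigma)$ applied to the image equals $M_{\varepsilon,\eta}$, establishing the desired invariance and completing the proof.

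The main technical obstacle I anticipate is purely bookkeeping: keeping track of which coset representative $\eta \in G_{v_j}/G_{\tau_\varepsilon^{-1}e_\varepsilon}$ indexes which factor, and verifying that the permutation $\eta \mapsto \sigma^{-1}\eta$ implicit in the definition of $\Theta_{ij}$ correctly pairs with the shifted argument of the semi-linear map $\varphi_{\cdot,\sigma}$ so that the cocycle relation can be invoked. Once the indexing is made precise and one observes that $\tau_\varepsilon^{-1}\sigma_\varepsilon v_i$ is genuinely $G_{\tau_\varepsilon^{-1}e_\varepsilon}$-fixed (a consequence of $s$ being Galois-equivariant), the rest is a direct application of semi-linearity and the cocycle identity.
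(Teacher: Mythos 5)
Your argument is correct, but it follows a genuinely different route than the paper. You fix the canonical rational subspace pointwise: you trace a pure tensor $w\otimes m\otimes 1$ through \eqref{eq:Leepsilondecomposition} and \eqref{eq:generalextensionisomorphism} to the tuple $M_{\varepsilon,\eta}=\varphi_{v_i,\eta\tau_\varepsilon^{-1}\sigma_\varepsilon}(w)\cdot\eta\tau_\varepsilon^{-1}(m)$, verify $\Theta_{ij}(\sigma)$-invariance via semi-linearity and the cocycle condition \eqref{eq:semi-linearactiononrep}, and conclude by Galois descent. The paper never tests on invariants; it instead transports the canonical semi-linear operator ${\bf1}\otimes{\bf1}\otimes\sigma$ through the identification \eqref{eq:WiiMjLpre} (using its explicit description, formula \eqref{eq:explicitdecompositiontwisting}) and then through \eqref{eq:WiiMjL} via the commutative square \eqref{eq:semi-linearsquare} of Proposition \ref{prop:generalextensionofscalars}, and observes that the transported operator is literally the formula defining $\Theta_{ij}(\sigma)$. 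Your approach buys a more elementary endgame --- once a spanning set of canonical invariants is seen to be $\Theta$-fixed, the two structures must agree; indeed your dimension count is superfluous, since the canonical invariants span over $L$ and any $\sigma$-semi-linear map fixing them is forced to act by $a\cdot u\mapsto\sigma(a)\cdot u$ --- at the cost of an explicit pure-tensor computation; the paper's approach stays at the level of maps and reuses the functorial square. Two points worth tightening, neither a gap: well-definedness of $M_{\varepsilon,\eta}$ on the coset $\eta G_{\tau_\varepsilon^{-1}e_\varepsilon}$ uses not only that $\tau_\varepsilon^{-1}\sigma_\varepsilon v_i$ is fixed by $G_{\tau_\varepsilon^{-1}e_\varepsilon}$, but also that $w$ is $G_{v_i}$-invariant (write $\kappa\tau_\varepsilon^{-1}\sigma_\varepsilon=\tau_\varepsilon^{-1}\sigma_\varepsilon\mu$ with $\mu\in G_{v_i}$) and that $\tau_\varepsilon^{-1}(m)\in L^{G_{\tau_\varepsilon^{-1}e_\varepsilon}}$; and the reindexing $\eta\mapsto\sigma^{-1}\eta$ in $\Theta_{ij}$ should likewise be read on cosets in $G_{v_j}/G_{\tau_\varepsilon^{-1}e_\varepsilon}$.
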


    \begin{proof}
      Note that the canonical $L_j$-rational structure on $W_i \otimes_{L_i} {}_iM_j\otimes_{L_j}L$ corresponds to the semi-linear automorphisms
      \[w\otimes m\otimes a\mapsto w\otimes m\otimes\sigma a,\]
      for $\sigma\in\Gal(L/L(v_j))$.

      In order to see that both rational structures agree, we observe that for any $\sigma\in\Gal(L/L(v_j))$ we consider the semi-linear automorphism
    \[
    {\bf1}_{V}\otimes \sigma\colon V\otimes_{\tau_\varepsilon,L(v_j)}L\to V\otimes_{\tau_\varepsilon,L(v_j)}L,\;v\otimes a\mapsto v\otimes\sigma a.
    \]
    In light of the explicit description of the right $L$-vector space isomorphism \eqref{eq:WiiMjLpre}, we have the commutative square
    \begin{center}
      \begin{tikzpicture}[>=Stealth]
        \node (N1) at (0,0) {$V\otimes_{\tau_\varepsilon,L(v_j)}L$};
        \node (N2) at (5,0) {$\prod\limits_{\eta\in G_{v_j}} V\otimes_{L(e_\varepsilon),\eta\tau_\varepsilon^{-1}\iota_\varepsilon}L$};
        \node (N3) at (0,-2) {$V\otimes_{\tau_\varepsilon,L(v_j)}L$};
        \node (N4) at (5,-2) {$\prod\limits_{\eta\in G_{v_j}} V\otimes_{L(e_\varepsilon),\eta\tau_\varepsilon^{-1}\iota_\varepsilon}L$};
        \draw[-Stealth] (N1.east) -- (N2.west) node[midway,above] {\eqref{eq:WiiMjLpre}};
        \draw[-Stealth] (N1.south) -- (N3.north) node[midway,left] {$\mathbf{1}_{V}\otimes \sigma$};
        \draw[-Stealth] (N2.south) -- (N4.north);
        \draw[-Stealth] (N3.east) -- (N4.west) node[midway,below] {\eqref{eq:WiiMjLpre}};
      \end{tikzpicture}
    \end{center}
    where the right vertical map is given by
    \begin{equation}
      (m_\eta\otimes a_\eta)_{\eta\in G_{v_j}}\mapsto (m_{\sigma^{-1}\eta}\otimes \sigma a_{\sigma^{-1}\eta} )_{\eta\in G_{v_j}}
      \label{eq:explicitdecompositiontwisting}
    \end{equation}
    This map descends mutatis mutandis to a $\sigma$-linear automorphism of the right hand side of \eqref{eq:WiiMjLpre}. By the commutativity of \eqref{eq:semi-linearsquare}, this shows that the corresponding map on the right hand side of \eqref{eq:WiiMjL} is explicitly given by
    \[
    (m_{\varepsilon,\eta})_{\varepsilon,\eta}\mapsto
    \left(\varphi_{\sigma^{-1}\eta\tau_\varepsilon^{-1}\sigma_\varepsilon v_i,\sigma}(m_{\varepsilon,\sigma^{-1}\eta})\right)_{\varepsilon,\eta},
    \]
    which agrees with the Definition of $\Theta_{ij}(\sigma)$. This shows the claim.
    \end{proof}
    
    To define the maps
    \[f_{ij} \colon W_i \otimes_{L_i} {}_iM_j \to W_j\]
    we first base change to $L$ on the domain and codomain to define
    \[f_{ij}\otimes{\bf1}_L \colon W_i \otimes_{L_i} {}_iM_j\otimes_{L_j}L \to W_j\otimes_{L_j}L\]
    using \eqref{eq:WiiMjL}. Then we will define $f_{ij}$ as a suitable Galois descent of the yet-to-be-defined map $f_{ij}\otimes{\bf1}_L$.
    
    As for the definition of $f_{ij}\otimes{\bf1}_L$, we put
    \begin{align*}
    \Phi_{ij}\colon
    \bigoplus_{\varepsilon\in G\backslash E_{ij}}
    \prod_{\eta}
    M(v_i)^{G_{v_i}} \otimes_{L(v_i),\eta\tau_\varepsilon^{-1}\sigma_\varepsilon}L
    &\to
    M(v_j)=M(v_j)^{G_{v_j}}\otimes_{L(v_j)}L,\\
    (m_{\varepsilon,\eta}\otimes a_{\varepsilon,\eta})_{\varepsilon,\eta}&\mapsto
    \sum_{\varepsilon,\eta}
    \varphi_{t(e_\varepsilon),\eta\tau_\varepsilon^{-1}}\phi_{e_\varepsilon}\varphi_{e_i,\sigma_\varepsilon}(m_{\varepsilon,\eta})\cdot a_{\varepsilon,\eta}.
    \end{align*}

    \begin{proposition}
      $\Phi_{ij}$ descends to an $L_j$-linear map
      \[
      f_{ij}\colon W_i \otimes_{L_i} {}_iM_j \to W_j.
      \]
    \end{proposition}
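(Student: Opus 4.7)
The strategy is to apply Galois descent (Theorem \ref{thm:galoisdescent}) to the $L$-linear map $\Phi_{ij}$. First I would verify that the formula for $\Phi_{ij}$ is well-defined on the twisted tensor products in the source: the composition of the $\sigma_\varepsilon$-semilinear map $\varphi_{v_i,\sigma_\varepsilon}$, the $L$-linear map $\phi_{e_\varepsilon}$, and the $\eta\tau_\varepsilon^{-1}$-semilinear map $\varphi_{t(e_\varepsilon),\eta\tau_\varepsilon^{-1}}$ is $(\eta\tau_\varepsilon^{-1}\sigma_\varepsilon)$-semilinear in the argument $m_{\varepsilon,\eta}\in M(v_i)^{G_{v_i}}$, which matches exactly the twist on the $(\varepsilon,\eta)$-summand of the source. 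Multiplication by $a_{\varepsilon,\eta}\in L$ then delivers an $L$-linear $\Phi_{ij}$.

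By Proposition \ref{prop:rationalstructures}, the canonical $L_j$-rational structure on $W_i\otimes_{L_i}{}_iM_j\otimes_{L_j}L$ agrees, under the identification of Corollary \ref{cor:WiiMjL}, with $\Theta_{ij}$, and its $G_{v_j}$-invariants are $W_i\otimes_{L_i}{}_iM_j$. The target $M(v_j)=W_j\otimes_{L_j}L$ carries the rational structure $(\varphi_{v_j,\sigma})_{\sigma}$ with invariants $W_j$. By Theorem \ref{thm:galoisdescent}, the existence of the desired descent $f_{ij}$ therefore reduces to the $G_{v_j}$-equivariance
\[
\varphi_{v_j,\sigma}\circ\Phi_{ij} \;=\; \Phi_{ij}\circ\Theta_{ij}(\sigma),\qquad \sigma\in G_{v_j}.
\]
Once this is verified, $f_{ij}$ is automatically $L_j$-linear, being the restriction of the $L$-linear $\Phi_{ij}$ to the invariant subspaces.

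The equivariance is a component-wise calculation. On the left-hand side, the cocycle relation \eqref{eq:semi-linearactiononrep} yields
\[
\varphi_{v_j,\sigma}\circ\varphi_{t(e_\varepsilon),\eta\tau_\varepsilon^{-1}} \;=\; \varphi_{t(e_\varepsilon),\sigma\eta\tau_\varepsilon^{-1}},
\]
valid because $\eta\tau_\varepsilon^{-1}t(e_\varepsilon)=\eta v_j=v_j$ for $\eta\in G_{v_j}$. Reindexing $\eta\mapsto\sigma\eta$ in the product over $\eta$, the resulting sum coincides termwise with the expansion of $\Phi_{ij}(\Theta_{ij}(\sigma)(\cdot))$ in tensor notation, once one uses the cocycle relation once more to absorb a factor $\varphi_{\sigma^{-1}\eta\tau_\varepsilon^{-1}\sigma_\varepsilon v_i,\sigma}$ on the input side, matching the defining formula of $\Theta_{ij}(\sigma)$. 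The main obstacle is precisely this bookkeeping: the shift $\eta\mapsto\sigma^{-1}\eta$ implicit in $\Theta_{ij}$ must be aligned with the absorption of $\varphi_{v_j,\sigma}$ into $\varphi_{t(e_\varepsilon),\eta\tau_\varepsilon^{-1}}$, and all cocycle factors must cancel precisely. The edge compatibility \eqref{eq:koecheractiononrep} is not needed in the equivariance step itself, since $\Theta_{ij}$ only permutes within the $\eta$-index of a fixed orbit representative $e_\varepsilon$; however, it is what guarantees that the expression defining $\Phi_{ij}$ is intrinsic to the orbit $Ge_\varepsilon$, and hence that the descended map $f_{ij}$ depends only on $W_i\otimes_{L_i}{}_iM_j$ and not on the chosen representatives $e_\varepsilon$, $\sigma_\varepsilon$, $\tau_\varepsilon$.
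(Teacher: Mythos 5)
Your proposal is correct and follows essentially the same route as the paper: one checks the $G_{v_j}$-equivariance $\varphi_{v_j,\sigma}\circ\Phi_{ij}=\Phi_{ij}\circ\Theta_{ij}(\sigma)$ componentwise via the cocycle relation \eqref{eq:semi-linearactiononrep} (absorbing $\varphi_{v_j,\sigma}$ into $\varphi_{t(e_\varepsilon),\eta\tau_\varepsilon^{-1}}$ and reindexing $\eta\mapsto\sigma\eta$), identifies $\Theta_{ij}$ with the canonical $L_j$-rational structure via Proposition \ref{prop:rationalstructures}, and concludes by Galois descent. Your preliminary check that the semi-linearity types match the twists in the source (so that $\Phi_{ij}$ is well defined and $L$-linear) and your remark that \eqref{eq:koecheractiononrep} is not needed in the equivariance step are consistent with, and slightly more explicit than, the paper's argument.
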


    \begin{proof}
      We show that $\Phi_{ij}$ is $G_{v_j}$-equivariant. For a arbitrary element $(m_{\varepsilon,\eta}\otimes a_{\varepsilon,\eta})_{\varepsilon,\eta}$ we compute:
      \begin{align*}
      \Phi_{ij}\left(\Theta_{ij}(\sigma)((m_{\varepsilon,\eta}\otimes a_{\varepsilon,\eta})_{\varepsilon,\eta})\right)
      &=
      \Phi_{ij}\left((m_{\varepsilon,\sigma^{-1}\eta}\otimes \sigma a_{\varepsilon,\sigma^{-1}\eta} )_{\varepsilon,\eta}\right)\\
      &=
      \sum_{\varepsilon,\eta}
      \varphi_{t(e_\varepsilon),\eta\tau_\varepsilon^{-1}}\phi_{e_\varepsilon}\varphi_{e_i,\sigma_\varepsilon}(m_{\varepsilon,\sigma^{-1}\eta})\cdot \sigma a_{\varepsilon,\sigma^{-1}\eta}\\
      &=
      \sum_{\varepsilon,\eta}
      \varphi_{t(e_\varepsilon),\sigma\eta\tau_\varepsilon^{-1}}\phi_{e_\varepsilon}\varphi_{e_i,\sigma_\varepsilon}(m_{\varepsilon,\eta})\cdot \sigma a_{\varepsilon,\eta}\\
      &=
      \varphi_{t(e_j),\sigma}
      \left(
      \sum_{\varepsilon,\eta}
      \varphi_{t(e_\varepsilon),\eta\tau_\varepsilon^{-1}}\phi_{e_\varepsilon}\varphi_{e_i,\sigma_\varepsilon}(m_{\varepsilon,\eta})\cdot a_{\varepsilon,\eta}
      \right)\\
      &=
      \varphi_{t(e_j),\sigma}
      \left(
      \Phi_{ij}\left((m_{\varepsilon,\eta}\otimes a_{\varepsilon,\eta})_{\varepsilon,\eta}\right)
      \right).
      \end{align*}
      where we invoked \eqref{eq:explicitdecompositiontwisting} for the first identity. This proves the Galois-equivariance with respect to the rational structure defined by $\Theta_{ij}$. By Proposition \ref{prop:rationalstructures}, this rational structure agrees with the canonical rational structure. Therefore, $\Phi_{ij}$ descends via Galois descent to an $L_j$-linear map $f_{ij}$ as claimed.
    \end{proof}

    \begin{proposition}
      For any morphism $\psi \colon M_K \to N_K$ in $\Rep(\Gamma_K)$ the collection of $L$-linear maps $\phi_v \colon M(v) \to N(v)$ for $v\in V$ induces for any $i\in I=G\backslash V$ a canonical $L_i$-linear map
      \[
      \psi_{i}\colon M(v)^{G_{v_i}}\to N(v)^{G_{v_i}}.
      \]
      The collection $(\psi_i)_{i\in I}$ defines a morphism $S(M_K)\to S(N_K)$ of $K$-species representations.
      \end{proposition}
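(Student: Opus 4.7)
The plan is to construct the components $\psi_i$ at the level of Galois invariants and then verify the commutativity of the defining square of a species morphism via Galois descent. The first step is a direct consequence of the rationality condition on $\psi$; the second step reduces, after base change to $L$, to a termwise comparison against the explicit formula for $\Phi_{ij}$.

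For the construction of $\psi_i$, I would apply the rationality condition \eqref{eq:koecheractiononrephom} to an arbitrary $\sigma\in G_{v_i}$. Since $\sigma v_i=v_i$, this condition takes the form $\psi_{v_i}\circ\varphi_{M,v_i,\sigma}=\varphi_{N,v_i,\sigma}\circ\psi_{v_i}$, so the $L$-linear map $\psi_{v_i}$ is $G_{v_i}$-equivariant and restricts to a map $\psi_i\colon M(v_i)^{G_{v_i}}\to N(v_i)^{G_{v_i}}$. The restriction of an $L$-linear map to $G_{v_i}$-invariants is automatically $L^{G_{v_i}}=L_i$-linear, which yields the claimed $L_i$-linearity of $\psi_i$.

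To verify that $(\psi_i)_{i\in I}$ defines a morphism of species representations, I would check the commutativity of
\[
f^N_{ij}\circ(\psi_i\otimes\mathbf{1}_{{}_iM_j})=\psi_j\circ f^M_{ij}\colon W^M_i\otimes_{L_i}{}_iM_j\to W^N_j,
\]
where $W^M_i:=M(v_i)^{G_{v_i}}$ and $W^N_i:=N(v_i)^{G_{v_i}}$. By Proposition \ref{prop:homdescent}, it suffices to check this after base change to $L$. Under the identification \eqref{eq:WiiMjL}, the base-changed maps $f^M_{ij}\otimes\mathbf{1}_L$ and $f^N_{ij}\otimes\mathbf{1}_L$ are precisely $\Phi^M_{ij}$ and $\Phi^N_{ij}$, and the tensor $(\psi_i\otimes\mathbf{1}_{{}_iM_j})\otimes\mathbf{1}_L$ corresponds under \eqref{eq:WiiMjL} to the collection of components $\psi_{\eta\tau_\varepsilon^{-1}\sigma_\varepsilon v_i}$ acting termwise. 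Plugging this into the explicit formula for $\Phi_{ij}$ and invoking (i) the compatibility $\psi_{t(e)}\circ\phi_{e,M}=\phi_{e,N}\circ\psi_{s(e)}$ of $\psi$ with edge maps, together with (ii) the semi-linear compatibility \eqref{eq:koecheractiononrephom}, each summand of $\Phi^N_{ij}\circ((\psi_i\otimes\mathbf{1}_{{}_iM_j})\otimes\mathbf{1}_L)$ equals the corresponding summand of $\psi_{v_j}\circ\Phi^M_{ij}$. The resulting $L$-linear identity descends to the required $L_j$-linear identity upon matching rational structures via Proposition \ref{prop:rationalstructures}.

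The main obstacle is purely notational bookkeeping: one has to track the summation indices $\varepsilon\in G\backslash E_{ij}$ and $\eta\in G_{v_j}/G_{\tau_\varepsilon^{-1}e_\varepsilon}$ through the tensor decomposition \eqref{eq:WiiMjL} and confirm that the two compatibility conditions on $\psi$ apply term-by-term. No structural input beyond the definitions is needed, and functoriality (preservation of identities and composition of morphisms) then follows immediately from the same bookkeeping applied to $\mathbf{1}_M$ and to $\psi'\circ\psi$.
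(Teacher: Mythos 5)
Your proposal is correct and follows essentially the same route as the paper: the first claim comes from applying \eqref{eq:koecheractiononrephom} with $\sigma\in G_{v_i}$ (so $\sigma v_i=v_i$) to see that $\psi_{v_i}$ is $G_{v_i}$-equivariant and restricts to an $L_i$-linear map on invariants, and the species-morphism square is checked after extension to $L$ by a termwise comparison with the explicit maps $\Phi_{ij}$, followed by Galois descent. If anything, your write-up is slightly more explicit than the paper's ``elementary calculation,'' since you correctly record that both the edge compatibility of $\psi$ and \eqref{eq:koecheractiononrephom} enter the computation.
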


      \begin{proof}
        We observe that for any $i\in I$ condition \eqref{eq:koecheractiononrephom} implies that $\psi_{v_i}$ is $G_{v_i}$-equivariant and hence descends to an $L^{G_{v_i}}$-linear map as claimed.
        
      Assume that
      \[
      M_K=((V_v)_{v\in V},(\phi_e)_{e\in E})\quad\text{and}\quad
      N_K=((V_v')_{v\in V},(\phi_e')_{e\in E})
      \]
      Then an elementary calculation shows that the $L$-linear maps $\Phi_{ij}$ underlying the defin ition of $f_{ij}$ commute with the collection of the maps $\psi_{v_i}$ by \eqref{eq:koecheractiononrep}. By Galois descent, this implies that the maps $\psi_{i}$ for $i\in I$ define a $K$-species homomorphism as claimed.      
      \end{proof}

      \begin{theorem}\label{thm:quiveretalespeciesrepresentations}
        The maps
        \begin{equation}
        S\colon M_K=((V_v)_{v\in V},(\varphi_{v,\sigma})_{v\in V,\sigma\in G},(\phi_e)_{e\in E})\;\mapsto\;((V_{v_i}^{G_{v_i}})_{i\in I},(f_{ij})_{i,j\in I}),
        \end{equation}
        and
        \begin{equation}
        S\colon \left((\psi_v)_{v\in V}\colon M_K\to N_K\right)\;\mapsto\;(\psi_i)_{i\in I}
        \end{equation}
        defines a functor $\Rep(\Gamma_K)\to\Rep(S(\Gamma_K))$.
      \end{theorem}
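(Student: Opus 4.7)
The plan is to assemble the preceding propositions into a functoriality statement; very little fresh work is needed, as the construction of $(W_i, f_{ij})$ from $M_K$ and of $(\psi_i)_{i\in I}$ from $\psi$ has already been shown to be well-defined. What remains is verifying that these assignments respect identities and composition, together with a brief check of independence from the auxiliary choices of orbit representatives $v_i\in i$ and $e_\varepsilon\in\varepsilon$.

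First, I would record that the object assignment produces a bona fide object of $\Rep(S(\Gamma_K))$. Galois descent (Theorem~\ref{thm:galoisdescent}) endows $W_i := M(v_i)^{G_{v_i}}$ with the structure of a finite-dimensional $L_i$-vector space, since $M(v_i)$ is finite-dimensional over $L$ and equipped with a semi-linear $G_{v_i}$-action via the $\varphi_{v_i,\sigma}$ for $\sigma\in G_{v_i}$. The $L_j$-linearity of $f_{ij}$ is then exactly the content of the proposition immediately preceding the theorem: $\Phi_{ij}$ is $G_{v_j}$-equivariant with respect to $\Theta_{ij}$, which by Proposition~\ref{prop:rationalstructures} coincides with the canonical $L_j$-rational structure on $W_i\otimes_{L_i}{}_iM_j\otimes_{L_j}L$, so $\Phi_{ij}$ descends to the required $L_j$-linear $f_{ij}\colon W_i\otimes_{L_i}{}_iM_j\to W_j$.

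Next, I would handle the morphism assignment. The proposition preceding the theorem already shows that for $\psi\colon M_K\to N_K$, each $\psi_{v_i}$ is $G_{v_i}$-equivariant and descends to an $L_i$-linear $\psi_i\colon W_i\to W_i'$, and that the collection $(\psi_i)_{i\in I}$ intertwines the maps $f_{ij}$ and $f_{ij}'$ by Galois descent applied to the commutativity of $\Phi_{ij}$ with $(\psi_{v_i})_{v_i}$ (which in turn follows from \eqref{eq:koecheractiononrep}). Functoriality is then immediate: the identity morphism $\id_{M_K}$ has components $\id_{M(v)}$, which restrict to identities on $G_{v_i}$-invariants, giving $S(\id_{M_K}) = \id_{S(M_K)}$; and for composable $\psi\colon M_K\to N_K$ and $\chi\colon N_K\to P_K$, the identity $(\chi\circ\psi)_{v_i} = \chi_{v_i}\circ\psi_{v_i}$ restricts on invariants to $\chi_i\circ\psi_i$, yielding $S(\chi\circ\psi) = S(\chi)\circ S(\psi)$.

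The only remaining subtlety, and the mildest of obstacles, is independence of the chosen representatives $v_i$ and $e_\varepsilon$. A different choice $v_i' = \sigma v_i$ yields $M(v_i')^{G_{v_i'}}$, which is canonically isomorphic to $W_i$ via $\varphi_{v_i,\sigma}|_{W_i}$, and the analogous remark applies to the edge representatives. Fixing one system of representatives once and for all thus yields a well-defined functor, and any other choice yields a naturally isomorphic functor. Since all of these compatibilities reduce to the cocycle condition \eqref{eq:semi-linearactiononrep} and the commutativity diagram \eqref{eq:semi-linearsquare}, no new computation is required.
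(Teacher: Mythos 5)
Your proposal is correct and follows the same route as the paper, which simply appeals to the preceding propositions for well-definedness and notes that the functor axioms (preservation of identities and composition) are a straightforward check on $G_{v_i}$-invariants. Your additional remark on independence of the chosen orbit representatives $v_i$ and $e_\varepsilon$ is a reasonable elaboration of a point the paper leaves implicit.
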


      \begin{proof}
        By our previous considerations both maps are well defined. The verification of the functor properties is straightforward.
      \end{proof}

  \paragraph{The functor $H \colon \Rep(S(\Gamma_K)) \to \Rep(\Gamma_K)$.}
  Let
  \[
  (W_{i}, f_{ij})_{i,j\in I}
  \]
  be a representation of $S(\Gamma_K)$. We are going to define a representation of the $K$-rational quiver $\Gamma_K$
  \[
  H((W_i, f_{ij})) = ((M(v))_{v\in V}, (\varphi_{v,\sigma})_{v\in V,\sigma\in G},(\phi_e)_{e\in E})
  \]
  as follows.

  We fix for every $i\in I$ a $K$-rational structure
  \begin{equation}
    W_{i,K}\subseteq W_i,
    \label{eq:KrationalWi}
  \end{equation}
  Then for each vertex $v \in V$, we consider its orbit $i=Gv$, which by our previous choices in the previous section above, agrees with $Gv_{i}$. With these choices we put
  \begin{equation}
    M(v) := W_{i,K}\otimes_{K}L.
    \label{eq:MvforWi}
  \end{equation}
  Then we have for every $\sigma\in G$ a canonical $\sigma$-linear isomorphism
  \begin{equation}
    \varphi_{v,\sigma}\colon M(v)\to M(\sigma v),\quad w\otimes a\mapsto w\otimes \sigma a.
    \label{eq:varphi2}
  \end{equation}
  These isomorphisms satisfy the cocycle condition \eqref{eq:semi-linearactiononrep}.

  In order to define for every $e\in E$ an $L$-linear map $\phi_e\colon V(s(t))\to V(t(e))$ satisfying \eqref{eq:koecheractiononrep}, we proceed as follows. We first consider the following analogue of Corollary \ref{cor:WiiMjL}.

  \begin{proposition}
    For every $i,j\in I$ and every $\varepsilon\in E_{ij}$ recall the chosen representative $e_\varepsilon\in\varepsilon$ and let $\sigma_\varepsilon,\tau_\varepsilon\in G$ be as in \eqref{eq:endsofedgetranslations}. Then in light of definition \eqref{eq:MvforWi}, we have a canonical isomorphism
    \begin{equation}
    W_i \otimes_{L_i} {}_iM_j\otimes_{L_j}L=
    \bigoplus_{\varepsilon\in G\backslash E_{ij}}
    \bigoplus_{\eta}
    M(\eta\tau_\varepsilon^{-1}\sigma_\varepsilon v_i),
    \label{eq:WiiMjL2}
    \end{equation}
    where $\eta$ runs through a system of representatives for $G_{v_j}/G_{\tau_\varepsilon^{-1}e_\varepsilon}$.
  \end{proposition}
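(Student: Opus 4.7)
The plan is to mimic the derivation of \eqref{eq:WiiMjLpre} and Corollary \ref{cor:WiiMjL}, with the sole substantive modification being that the role played by Galois descent \eqref{eq:WitoMvi} in the construction of the functor $F$ is now taken over by the chosen $K$-rational structure \eqref{eq:KrationalWi} together with the definition \eqref{eq:MvforWi}.

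First I would unpack the left-hand side. By the definition of ${}_iM_j=\bigoplus_{\varepsilon\in G\backslash E_{ij}} L(e_\varepsilon)$, where each $L(e_\varepsilon)$ carries its natural $L_i,L_j$-bimodule structure induced by $\sigma_\varepsilon,\tau_\varepsilon$ as in \eqref{eq:endsofedgetranslations}, distributivity of the tensor product over direct sums gives
\[
W_i \otimes_{L_i} {}_iM_j\otimes_{L_j}L = \bigoplus_{\varepsilon\in G\backslash E_{ij}} W_i \otimes_{L_i,\sigma_\varepsilon} L(e_\varepsilon) \otimes_{\tau_\varepsilon,L_j} L.
\]
Next I would insert into each summand the already-established canonical decomposition \eqref{eq:Leepsilondecomposition}, applied with $V=L(e_\varepsilon)$. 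This rewrites $L(e_\varepsilon)\otimes_{\tau_\varepsilon,L_j}L$ as a product indexed by a system of representatives $\eta$ of $G_{v_j}/G_{\tau_\varepsilon^{-1}e_\varepsilon}$, with each factor being $L(e_\varepsilon)\otimes_{L(e_\varepsilon),\eta\tau_\varepsilon^{-1}}L$; since the index set is finite, the product is the direct sum appearing in \eqref{eq:WiiMjL2}. Collapsing the middle $L(e_\varepsilon)$-factor in each term yields $W_i \otimes_{L_i,\eta\tau_\varepsilon^{-1}\sigma_\varepsilon} L$.

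The remaining step, which is the only one specific to the present setting, is to identify each such summand with $M(\eta\tau_\varepsilon^{-1}\sigma_\varepsilon v_i)$. Here I would invoke the $K$-rational structure $W_{i,K}\subseteq W_i$ from \eqref{eq:KrationalWi}, which gives a canonical isomorphism $W_{i,K}\otimes_K L_i\cong W_i$. Consequently, for any $g\in G$ viewed as a $K$-algebra map $L_i\to L$,
\[
W_i \otimes_{L_i,g} L \;\cong\; (W_{i,K}\otimes_K L_i)\otimes_{L_i,g} L \;\cong\; W_{i,K}\otimes_K L,
\]
and the right-hand side is, by the very definition \eqref{eq:MvforWi}, equal to $M(g v_i)$. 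Applying this with $g=\eta\tau_\varepsilon^{-1}\sigma_\varepsilon$ for each pair $(\varepsilon,\eta)$ produces the required isomorphism and establishes \eqref{eq:WiiMjL2}.

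I do not anticipate any serious obstacle: the argument is a bookkeeping exercise parallel to Proposition \ref{prop:WiiMjLpre}, the only point requiring genuine care being consistency of the various twists $\sigma_\varepsilon$, $\tau_\varepsilon$, $\eta$ and their composites when chaining the tensor-product identifications. In particular, no Galois descent is needed in this direction, because the $M(v)$ are defined as base-changes of the $K$-forms $W_{i,K}$ rather than recovered as invariants.
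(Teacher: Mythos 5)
Your proposal is correct and follows essentially the same route as the paper's own proof: unpack ${}_iM_j$ over edge orbits, apply the decomposition \eqref{eq:Leepsilondecomposition}, collapse the twisted tensor factors to $W_i\otimes_{L_i,\eta\tau_\varepsilon^{-1}\sigma_\varepsilon}L$, and then use the chosen $K$-form $W_{i,K}$ together with definition \eqref{eq:MvforWi} to identify each summand with $M(\eta\tau_\varepsilon^{-1}\sigma_\varepsilon v_i)$. The paper phrases the final identification via the semi-linear maps \eqref{eq:varphi2}, which is the same canonical untwisting you carry out explicitly.
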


  \begin{proof}
    Observe first that we have an isomorphism
    \[
    W_{i,K}\otimes_K L_i\to W_i.
    \]
    Recalling our definition ${}_iM_j=L(e_\varepsilon)$ gives us
    \begin{align*}
      W_i \otimes_{L_i} {}_iM_j\otimes_{L_j}L
      &=\bigoplus_{\varepsilon\in G\backslash E_{ij}}W_{i}\otimes_{L(v_i),\sigma_\varepsilon} L(e_\varepsilon)\otimes_{\tau_\varepsilon,L(v_j)}L\\
      &=\bigoplus_{\varepsilon\in G\backslash E_{ij}}W_{i}\otimes_{L(v_i),\sigma_\varepsilon} \prod_{\eta}L(e_\varepsilon)\otimes_{L(e_\varepsilon),\eta\tau_\varepsilon^{-1}}L&\text{(by \eqref{eq:Leepsilondecomposition})}\\
      &=\bigoplus_{\varepsilon\in G\backslash E_{ij}}W_{i}\otimes_{L(v_i)} \prod_{\eta}L(v_i)\otimes_{L(v_i),\eta\tau_\varepsilon^{-1}\sigma_\varepsilon}L,
    \end{align*}
    where $\eta$ runs through a system of representatives for $G_{v_j}/G_{\tau_\varepsilon^{-1}e_\varepsilon}$ as before. By our choice of $K$-rational stucture, this agrees with
    \[
    W_{i,K}\otimes_K L_{v_i}\otimes_{L(v_i)} \prod_{\tau}L(v_i)\otimes_{L(v_i),\eta\tau_\varepsilon^{-1}\sigma_\varepsilon}L=
    \bigoplus_\eta W_{i,K}\otimes_K L(v_i)\otimes_{L(v_i),\eta\tau_\varepsilon^{-1}\sigma_\varepsilon}L,
    \]
    where we identified the finite direct product with a finite direct sum. Now the $\eta$-th direct summand on the right hand side is canonically isomorphic to $M(\eta\tau_\varepsilon^{-1}\sigma_\varepsilon v_i)$ via \eqref{eq:varphi2}. This proves the claim.
  \end{proof}

  For every $i,j\in I$ and every $e\in E_{ij}$ let $\sigma_e\in G$ be such that $\sigma_e e_{\varepsilon(e)}=e$ for $\varepsilon(e)=Ge$. Note that
  \[
  \sigma_e\tau_{\varepsilon(e)} v_j=
  \sigma_e t(e_{\varepsilon(e)})=
  t(\sigma_e e_{\varepsilon(e)})=
  t(e).
  \]
  Then we define $\phi_{e}\colon M(s(e))\to M(t(e))$ via \eqref{eq:WiiMjL2} as the composition
  \begin{align*}
      M(s(e))&= M(\sigma_e s(e_{\varepsilon(e)}))\\
      &=M(\sigma_e \sigma_{\varepsilon(e)} v_i)\\
      &\to M(\tau_{\varepsilon(e)}^{-1}\sigma_{\varepsilon(e)} v_i)&\text{(via $\varphi_{\sigma_e\sigma_{\varepsilon(e)}v_i,\tau_{\varepsilon(e)}^{-1}\sigma_e^{-1}}$)}\\
      &\to  \bigoplus_{\varepsilon\in G\backslash E_{ij}}
    \bigoplus_{\eta}
    M(\eta\tau_\varepsilon^{-1}\sigma_\varepsilon v_i)\\
    &=W_i \otimes_{L_i} {}_iM_j\otimes_{L_j}L\\
    &\to W_j \otimes_{L_j} L&\text{(via $f_{ij}\otimes{\bf1}_L$)}\\
    &\to M(t(e)) & \text{(via $\varphi_{v_j,\sigma_e\tau_{\varepsilon(e)}}$)}
  \end{align*}
  Note that this composition is $L$-linear by definition.
  \begin{proposition}
    For all $\sigma\in G$ and all $e\in E$ the $L$-linear map $\phi_e$ satisfies \eqref{eq:koecheractiononrep}.
  \end{proposition}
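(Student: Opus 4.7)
The plan is to unwind the definition of $\phi_e$ as a five-step composition, do the same for $\phi_{\sigma e}$, and show that the outer pieces absorb the Galois twist $\varphi_{\cdot,\sigma}$ via the cocycle condition \eqref{eq:semi-linearactiononrep}, while the middle piece $f_{ij}\otimes\mathbf{1}_L$ is untouched because it only depends on the pair of orbits $(i,j)$.

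First I would note the combinatorial reduction: since the orbit $\varepsilon(\sigma e)=G(\sigma e)=Ge=\varepsilon(e)$, the chosen representative $e_{\varepsilon(\sigma e)}$ coincides with $e_{\varepsilon(e)}$, the fixed elements $\sigma_{\varepsilon(e)}$ and $\tau_{\varepsilon(e)}$ are the same for both edges, and we may take $\sigma_{\sigma e}=\sigma\sigma_e$. Consequently $s(\sigma e)=\sigma\sigma_e\sigma_{\varepsilon(e)}v_i=\sigma s(e)$ and $t(\sigma e)=\sigma\sigma_e\tau_{\varepsilon(e)}v_j=\sigma t(e)$, so both $\phi_e$ and $\phi_{\sigma e}$ go through the summand of \eqref{eq:WiiMjL2} indexed by $\varepsilon=\varepsilon(e)$ and $\eta=1$, and they factor through the very same linear map $f_{ij}\otimes\mathbf{1}_L$.

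Next I would handle the source side. The defining composition of $\phi_{\sigma e}$ begins with $\varphi_{\sigma\sigma_e\sigma_{\varepsilon(e)}v_i,\,\tau_{\varepsilon(e)}^{-1}\sigma_e^{-1}\sigma^{-1}}$, while $\phi_e\circ\varphi_{s(e),\sigma}^{-1}$ is replaced in the equality to be shown by the composition $\phi_{\sigma e}\circ\varphi_{s(e),\sigma}$. Applying \eqref{eq:semi-linearactiononrep} with $v=s(e)$, $\tau=\sigma$, $\rho=\tau_{\varepsilon(e)}^{-1}\sigma_e^{-1}\sigma^{-1}$, this composition telescopes to $\varphi_{s(e),\,\tau_{\varepsilon(e)}^{-1}\sigma_e^{-1}}$, which is precisely the first map in the definition of $\phi_e$. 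Symmetrically on the target side, $\phi_e$ ends with $\varphi_{v_j,\,\sigma_e\tau_{\varepsilon(e)}}$; applying \eqref{eq:semi-linearactiononrep} with $v=v_j$, $\tau=\sigma_e\tau_{\varepsilon(e)}$ and the outer twist $\varphi_{t(e),\sigma}$ gives $\varphi_{v_j,\,\sigma\sigma_e\tau_{\varepsilon(e)}}$, which is the final map in $\phi_{\sigma e}$. Sandwiching the unchanged middle map $f_{ij}\otimes\mathbf{1}_L$ between these reassembled twists yields the identity \eqref{eq:koecheractiononrep}.

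The main obstacle is notational bookkeeping: one must verify that the particular summand of the direct sum \eqref{eq:WiiMjL2} entered by the $\varphi$-twist step for $\phi_e$ coincides with the one entered by $\phi_{\sigma e}$, so that the $L_i$-linearity of $f_{ij}$ (and of its scalar extension) actually allows us to identify the middle maps. Since $\varepsilon(\sigma e)=\varepsilon(e)$ and the coset representative $\eta=1$ in $G_{v_j}/G_{\tau_{\varepsilon(e)}^{-1}e_{\varepsilon(e)}}$ is the canonical one in both cases, this identification holds, and the two outer cocycle calculations above close the argument.
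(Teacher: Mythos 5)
Your overall strategy---peel off the outer semilinear maps, telescope them against $\varphi_{s(e),\sigma}$ and $\varphi_{t(e),\sigma}$ via the cocycle condition \eqref{eq:semi-linearactiononrep}, and leave the middle map $f_{ij}\otimes\mathbf{1}_L$ alone---has the right shape and matches the paper's proof in outline. But the reduction you make at the outset, ``we may take $\sigma_{\sigma e}=\sigma\sigma_e$,'' is not available, and removing it is exactly where the substance of the proof lies. The elements $\sigma_e$ are fixed once and for all, one per edge, before the $\phi_e$ are defined; $\phi_{\sigma e}$ is already defined using the previously chosen $\sigma_{\sigma e}$, and you are not free to re-choose it as a function of $\sigma$. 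The two candidates agree only up to the edge stabilizer: $\sigma_{\sigma e}^{-1}\sigma\sigma_e\in G_{e_{\varepsilon(e)}}$, and this element can be non-trivial. The cleanest failure case is $\sigma\in G_e\setminus\{\mathbf{1}\}$: then $\sigma e=e$, so $\sigma_{\sigma e}=\sigma_e\neq\sigma\sigma_e$, and \eqref{eq:koecheractiononrep} becomes the genuinely non-trivial assertion that $\phi_e$ itself commutes with the semilinear action of the edge stabilizer---which your argument, having assumed the discrepancy away, never establishes.

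The paper handles this by introducing $\eta_{e,\sigma}:=\tau_{\varepsilon(\sigma e)}^{-1}\sigma_{\sigma e}^{-1}\sigma\sigma_e\tau_{\varepsilon(e)}$, checking that it lies in $G_{v_j}$, and replacing the identity you place in the middle column by the $\eta_{e,\sigma}$-semilinear automorphism $\mathbf{1}\otimes\mathbf{1}\otimes\eta_{e,\sigma}$ of $W_i\otimes_{L_i}{}_iM_j\otimes_{L_j}L$. One must then verify two additional squares: that the inclusion of the $\eta=\mathbf{1}$-summand of \eqref{eq:WiiMjL2} intertwines $\varphi_{\bullet,\eta_{e,\sigma}}$ with this twist, and that $f_{ij}\otimes\mathbf{1}_L$ commutes with it (which uses that $f_{ij}$ is defined over $L_j=L(v_j)$ while $\eta_{e,\sigma}\in\Gal(L/L(v_j))$). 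Your telescoping computations on the source and target sides are correct as far as they go, but they only cover the case $\eta_{e,\sigma}=\mathbf{1}$, i.e., edges with trivial stabilizer; to complete the proof you need the $\eta_{e,\sigma}$-twisted version of your middle step.
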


  \begin{proof}
    Let $i,j\in I$, $e\in E_{ij}$ and $\sigma\in G$. Then
    \begin{align*}
      \tau_{\varepsilon(\sigma e)}^{-1}\sigma_{\sigma e}^{-1}\sigma\sigma_e\tau_{\varepsilon(e)}v_j
      &=
      \tau_{\varepsilon(\sigma e)}^{-1}\sigma_{\sigma e}^{-1}\sigma\sigma_et(e_{\varepsilon(e)})\\
      &=
      \tau_{\varepsilon(\sigma e)}^{-1}t(\sigma_{\sigma e}^{-1}\sigma\sigma_ee_{\varepsilon(e)})\\
      &=
      \tau_{\varepsilon(\sigma e)}^{-1}t(\sigma_{\sigma e}^{-1}\sigma e)\\
      &=
      \tau_{\varepsilon(\sigma e)}^{-1}t(e_{\varepsilon(\sigma e)})\\
      &=
      v_j.
    \end{align*}
    Therefore,
    \[
          \eta_{e,\sigma}:=\tau_{\varepsilon(\sigma e)}^{-1}\sigma_{\sigma e}^{-1}\sigma\sigma_e\tau_{\varepsilon(e)}\in G_{v_j}.
    \]
    Now consider the square
    \begin{center}
    \begin{tikzpicture}[>=Stealth]
    \node (A) at (0,0) {$M(s(e))$};
    \node (B) at (6,0) {$M(s(\sigma e))$};
    \node (C) at (0,-2) {$M(\tau_{\varepsilon(e)}^{-1}\sigma_{\varepsilon(e)} v_i)$};
    \node (D) at (6,-2) {$M(\tau_{\varepsilon(\sigma e)}^{-1}\sigma_{\varepsilon(\sigma e)} v_i)$};

    \draw[->] (A) -- (B) node[midway, above] {$\varphi_{s(e),\sigma}$};
    \draw[->] (A) -- (C) node[midway, left] {$\varphi_{\sigma_e\sigma_{\varepsilon(e)}v_i,\tau_{\varepsilon(e)}^{-1}\sigma_e^{-1}}$};
    \draw[->] (B) -- (D) node[midway, right] {$\varphi_{\sigma_{\sigma e}\sigma_{\varepsilon(\sigma e)}v_i,\tau_{\varepsilon(\sigma e)}^{-1}\sigma_{\sigma e}^{-1}}$};
    \draw[->] (C) -- (D) node[midway, below] {$\varphi_{\tau_{\varepsilon( e)}^{-1}\sigma_{\varepsilon( e)}v_i,\eta_{e,\sigma}}$};
    \end{tikzpicture}
    \end{center}
    To verify its commutativity, we compute with \eqref{eq:semi-linearactiononrep}:
    \begin{align*}
      \varphi_{\tau_{\varepsilon( e)}^{-1}\sigma_{\varepsilon( e)}v_i,\eta_{e,\sigma}}
      \circ
      \varphi_{s(e),\tau_{\varepsilon(e)}^{-1}\sigma_e^{-1}}
      &=
      \varphi_{s(e),\eta_{e,\sigma}\tau_{\varepsilon(e)}^{-1}\sigma_e^{-1}}\\
      &=
      \varphi_{s(e),\tau_{\varepsilon(\sigma e)}^{-1}\sigma_{\sigma e}^{-1}\sigma\sigma_e\sigma_e^{-1}}\\
      &=
      \varphi_{s(e),\tau_{\varepsilon(\sigma e)}^{-1}\sigma_{\sigma e}^{-1}\sigma}\\
      &=
      \varphi_{\sigma_{\sigma e}\sigma_{\varepsilon(\sigma e)}v_i,\tau_{\varepsilon(\sigma e)}^{-1}\sigma_{\sigma e}^{-1}}
      \circ
      \varphi_{s(e),\sigma}.
    \end{align*}
    $\bigoplus\limits_\eta\varphi_{\bullet,\eta_{e,\sigma}}$ induces a canonical $\eta_{e,\sigma}$-linear isomorphism
    \[
    \bigoplus_{\eta}
    M(\eta\tau_{\varepsilon(e)}^{-1}\sigma_{\varepsilon(e)} v_i)
    \to
    \bigoplus_{\eta}
    M(\eta_{e,\sigma}\eta\eta_{e,\sigma}^{-1}\tau_{\varepsilon(\sigma e)}^{-1}\sigma_{\varepsilon(\sigma e)} v_i)
    =
    \bigoplus_{\eta}
    M(\eta\tau_{\varepsilon(\sigma e)}^{-1}\sigma_{\varepsilon(\sigma e)} v_i),
    \]
    which together with the canonical inclusions for the $\eta={\bf1}$-summands induces a commutative square. Adding a further layer of identifications, we obtain a commutative square
    \begin{center}
    \begin{tikzpicture}[>=Stealth]
    \node (A) at (0,0) {$M(s(e))$};
    \node (B) at (6,0) {$M(s(\sigma e))$};
    \node (C) at (0,-2) {$W_i\otimes_{L_i}{}_iM_j\otimes_{L_j} L$};
    \node (D) at (6,-2) {$W_i\otimes_{L_i}{}_iM_j\otimes_{L_j} L$};

    \draw[->] (A) -- (B) node[midway, above] {$\varphi_{s(e),\sigma}$};
    \draw[->] (A) -- (C) node[midway, left] {};
    \draw[->] (B) -- (D) node[midway, right] {};
    \draw[->] (C) -- (D) node[midway, below] {${\bf1}\otimes{\bf1}\otimes\eta_{e,\sigma}$};
    \end{tikzpicture}
    \end{center}
    To conclude the proof, consider the diagram
    \begin{center}
    \begin{tikzpicture}[>=Stealth]
    \node (A) at (0,0) {$W_i\otimes_{L_i}{}_iM_j\otimes_{L_j} L$};
    \node (B) at (6,0) {$W_i\otimes_{L_i}{}_iM_j\otimes_{L_j} L$};
    \node (C) at (0,-2) {$M(t(e))$};
    \node (D) at (6,-2) {$M(t(\sigma e))$};

    \draw[->] (A) -- (B) node[midway, above] {${\bf1}\otimes{\bf1}\otimes\eta_{e,\sigma}$};
    \draw[->] (A) -- (C) node[midway, left] {};
    \draw[->] (B) -- (D) node[midway, right] {};
    \draw[->] (C) -- (D) node[midway, below] {$\varphi_{t(e),\sigma}$};
    \end{tikzpicture}
    \end{center}
    which commutes due to the definition of $M(-)$ via \eqref{eq:MvforWi} and the definition of $\varphi_{t(e),\sigma}$ via \eqref{eq:varphi2}, taking the vertical identifications via $\varphi_{v_j,\sigma_e\tau_{\varepsilon(e)}}$ and $\varphi_{v_j,\sigma_{\sigma e}\tau_{\varepsilon(\sigma e)}}$ into account.
  \end{proof}
  This defines the functor $H$ on objects. A morphism in $\Rep(S(\Gamma_K))$ naturally induces a morphism in $\Rep(\Gamma_K)$ in light of the definition of $M(v)$ via \eqref{eq:MvforWi}, making $H$ a functor.

  \begin{theorem}\label{thm:species_equivalence}
  Let $\Gamma_K$ be a $K$-rational quiver. The category $\Rep(\Gamma_K)$ of finite-dimensional $K$-rational representations of $\Gamma_K$ is equivalent to the category $\Rep(S(\Gamma_K))$ of finite-dimensional representations of its associated $K$-species $S(\Gamma_K)$.
  \end{theorem}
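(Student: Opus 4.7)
My plan is to show that the two functors $F\colon \Rep(\Gamma_K)\to\Rep(S(\Gamma_K))$ from Theorem \ref{thm:quiveretalespeciesrepresentations} and $H\colon \Rep(S(\Gamma_K))\to\Rep(\Gamma_K)$ just constructed form a quasi-inverse pair. The mechanism in both directions is Galois descent (Theorem \ref{thm:galoisdescent}): a $K$-rational quiver representation is an $L$-linear representation of the underlying quiver equipped with compatible semilinear Galois data, while a species representation packages the same information orbit-wise; the isomorphisms \eqref{eq:WitoMvi} and \eqref{eq:WiiMjL} make this exchange explicit on objects, and Proposition \ref{prop:rationalstructures} ensures that the two rational structures arising on the intermediate space $W_i\otimes_{L_i}{}_iM_j\otimes_{L_j}L$ agree.

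For the natural isomorphism $H\circ F\simeq\id_{\Rep(\Gamma_K)}$, I would start from $M_K$, set $(W_i,f_{ij})=F(M_K)$ with $W_i=M(v_i)^{G_{v_i}}$, and pick any $K$-form $W_{i,K}\subseteq W_i$. Such a form exists (any $L_i$-basis of $W_i$ spans one over $K$), and different choices yield naturally isomorphic outputs, so $H$ is well-defined up to natural isomorphism. Combining the tautological $W_{i,K}\otimes_KL_i\cong W_i$ with \eqref{eq:WitoMvi} gives a canonical $L$-linear identification $H(F(M_K))(v_i)=W_{i,K}\otimes_KL\cong M(v_i)$, and propagating via the semilinear maps $\varphi_{v_i,\sigma}$ to the remaining vertices of each orbit produces a compatible identification with the tautological $\varphi$-action defined in \eqref{eq:varphi2}. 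That the reconstructed edge maps agree with the original $\phi_e$ is encoded in the very definition of $\phi'_e$ via $f_{ij}\otimes{\bf1}_L$: unwinding that composition through the decomposition \eqref{eq:WiiMjL2} recovers precisely the summands $\varphi_{t(e_\varepsilon),\eta\tau_\varepsilon^{-1}}\phi_{e_\varepsilon}\varphi_{v_i,\sigma_\varepsilon}$ appearing in $\Phi_{ij}$, and on the $\eta=\mathbf{1}$-summand corresponding to the appropriate edge orbit this composition collapses to $\phi_e$.

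For the converse $F\circ H\simeq\id_{\Rep(S(\Gamma_K))}$, starting from $(W_i,f_{ij})$ with $M_K=H((W_i,f_{ij}))$ and $M(v_i)=W_{i,K}\otimes_KL$, the definition of $F$ produces $W'_i=M(v_i)^{G_{v_i}}=(W_{i,K}\otimes_KL)^{G_{v_i}}$. Since $L_i=L^{G_{v_i}}$, Galois descent applied to $L/L_i$ identifies this canonically with $W_{i,K}\otimes_KL_i\cong W_i$. At the level of morphisms, both $f_{ij}$ (the input) and $f'_{ij}$ (the output of $F\circ H$) arise by Galois descent from the same $L$-linear map on $W_i\otimes_{L_i}{}_iM_j\otimes_{L_j}L$: on the $H$-side it is the composition entering the definition of $\phi_e$ via \eqref{eq:WiiMjL2}, and on the $F$-side it is $\Phi_{ij}$. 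By Proposition \ref{prop:rationalstructures} both descend with respect to the same rational structure, so they yield the same $L_j$-linear map.

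The main obstacle is not conceptual but combinatorial: the many auxiliary choices (representatives $v_i$ and $e_\varepsilon$, translation elements $\sigma_\varepsilon,\tau_\varepsilon$, coset representatives $\eta\in G_{v_j}/G_{\tau_\varepsilon^{-1}e_\varepsilon}$) enter every decomposition, and one must verify that the natural transformations are insensitive to them. This amounts to repeated applications of the cocycle identity \eqref{eq:semi-linearactiononrep} together with Proposition \ref{prop:rationalstructures} and the decompositions \eqref{eq:WiiMjL}, \eqref{eq:WiiMjL2}, all of which have been arranged precisely so that every verification reduces to a routine invocation of Galois descent.
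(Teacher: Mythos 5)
Your proposal is correct and takes essentially the same route as the paper: both establish the equivalence by showing that the functors $F$ and $H$ are quasi-inverse, with Galois descent (via the identifications $W_i\otimes_{L_i}L\cong M(v_i)$ and the decomposition of $W_i\otimes_{L_i}{}_iM_j\otimes_{L_j}L$, together with the comparison of rational structures) providing the natural isomorphisms $H\circ F\cong\id$ and $F\circ H\cong\id$. In fact your write-up is more explicit than the paper's own proof, which leaves the recovery of the edge maps and the independence of auxiliary choices as a ``straightforward verification.''
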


  \begin{proof}
  The constructions are such that they are mutually inverse up to natural isomorphism. For a species representation $(W_i, f_{ij})$, applying $H$ gives a rational quiver representation $M_K$. Applying $F$ to $M_K$ gives back spaces $M(v_i)^{G_{v_i}} = (W_i\otimes_{L_i}L)^{G_{v_i}} \cong W_i$, and the maps are likewise recovered.
  
  Conversely, starting with $M_K$, applying $F$ and then $H$ gives the representation with spaces $M(v_i)^{G_{v_i}}\otimes_{L_i}L$, which are naturally isomorphic to $M(v_i)$ by Galois descent. The maps are also recovered through this process.

  A straihgtforward verification considering morphisms, shows that $F \circ H \cong \mathrm{Id}_{\Rep(S(\Gamma_K))}$ and $H \circ F \cong \mathrm{Id}_{\Rep(\Gamma_K)}$.
  \end{proof}

  \begin{remark}
    Given the base change formalism for $K$-rational quivers, $K$-species and their associated representations, it is easy to verify that the categorical equivalences for varying $K$ define an equivalence of stacks in the \'etale topology on $K$.
  \end{remark}

\section{Rational Harish-Chandra modules}\label{sec:HC}

\subsection{Rational Harish-Chandra modules}

  We understand rational $(\mathfrak{g}, K)$-modules as in \cite{Januszewski18}. Specifically, a \emph{pair} over a field $k$ of characteristic 0 consists of a $k$-Lie algebra $\mathfrak{a}_k$ and a linear algebraic group $B_k$ over $k$, whose identity component $B^\circ$ is reductive. Furthermore, $\mathfrak{b}_k = \Lie_k(B_k) \subseteq \mathfrak{a}_k$, and there is a $k$-rational action of $B_k$ on $\mathfrak{a}_k$ that extends the natural action on $\mathfrak{b}_k$. An $(\mathfrak{a}_k, B_k)$-module over $k$ is a $k$-vector space that is a colimit of rational $B_k$-representations and is endowed with a $k$-linear $\mathfrak{a}_k$-action compatible with the $B_k$-action.

  We consider $\SL_2$ as a linear algebraic group over $\QQ$, to which we associate the $\QQ$-rational semisimple pair $(\mathfrak{sl}_{2,\QQ}, \SO(2)_\QQ)$, where $\mathfrak{sl}_{2,\QQ} \subseteq \QQ^{2\times 2}$ denotes the space of trace-0 matrices and
  \[
  \SO(2)_\QQ = \Spec \QQ[x, y]/\langle x^2 + y^2 - 1\rangle.
  \]
  Here, a specialization $(\xi,\eta)\in\CC^2$ of the pair $(x,y)$ corresponds to the matrix
  \[
  \begin{pmatrix} \xi & -\eta \\ \eta & \xi \end{pmatrix} \in \SO(2,\CC).
  \]
  This defines a $\QQ$-rational structure on $\SO(2, \CC) \subseteq \CC^{2\times 2}$.
  
  For brevity, we will write $(\mathfrak{g}_\QQ, K_\QQ) = (\mathfrak{sl}_{2,\QQ}, \SO(2)_\QQ)$ and $(\mathfrak{g}, K) = (\mathfrak{sl}_{2,\CC}, \SO(2, \CC))$ in the following.

\subsection{$\QQ$-rational quivers for $\SL_2(\RR)$}
  
  We have already endowed the classical Gelfand quiver for $\SL_2(\RR)$ with a $\QQ$-rational structure in Example \ref{ex:gelfand}, as well as the cyclic quiver in Example \ref{ex:cyclic}.

\subsection{Rational Modules}

  We consider the classical triple
  \[
  H:=\begin{pmatrix} 0 & -i \\ i & 0 \end{pmatrix}, \quad
  X:=\frac{1}{2}\begin{pmatrix} 1 & i \\ i & -1 \end{pmatrix}, \quad
  Y:=\frac{1}{2}\begin{pmatrix} 1 & -i \\ -i & -1 \end{pmatrix}.
  \]
  This is a $\CC$-basis of $\mathfrak{g}$ and the usual relations are satisfied:
  \[
    [H, X] = 2X, \quad [H, Y] = -2Y, \quad [X, Y] = H.
  \]
  We let the complex conjugation $c \in G = \Gal(\QQ[\sqrt{-1}]/\QQ)$ act via its canonical action on the matrix entries. This gives:
  \[
  cH = -H, \quad cX = Y, \quad cY = X.
  \]
  The corresponding $\QQ$-rational form $\mathfrak{g}_\QQ$ of $\mathfrak{g}$ then agrees with $\mathfrak{sl}_{2,\QQ}$.
  
  For the Casimir element $C = H^2 - 2H + 4XY + 1 \in U(\mathfrak{g})$, we obtain
  \[
  cC = (cH)^2 - 2(cH) + 4(cX)(cY) + 1 = H^2 + 2H + 4YX + 1 = C.
  \]
  Thus $C$ descends to $\QQ$.
  
  A Harish-Chandra module $M$ for $\SL_2(\RR)$ is a complex $(\mathfrak{g}, K)$-module that is finitely generated and admissible. Admissibility here means that for every character
  \[
  \chi_n\colon \SO(2, \RR) \ni \begin{pmatrix} \xi & -\eta \\ \eta & \xi \end{pmatrix} \mapsto (\xi+i\eta)^n \in \CC^\times,
  \]
  for $n \in \ZZ$, the isotopic component
  \[
  M_n := M[\chi_n]
  \]
  is finite-dimensional. We know that $M = \bigoplus_{n\in\ZZ} M_n$. This gives the abelian category $\HC(\mathfrak{g}, K)$ of Harish-Chandra modules over $\CC$.
  
  \begin{definition}[Generalized infinitesimal character]
    For $\ell \in \ZZ_{\ge 0}$, let $\lambda := \ell^2$. We say that a Harish-Chandra module $M$ defined over a field $E/\QQ$ has \emph{generalized infinitesimal character} $\lambda$ if there exists an $m\ge 0$ such that $(C-\lambda)^m \cdot M = 0$.
  \end{definition}

  \begin{definition}[The Category $\HC_\lambda(\mathfrak{g}, K)_E$]
    We denote the full subcategory of Harish-Chandra modules over $E$ with generalized infinitesimal character $\lambda$, on which $-1_2 \in \SL_2(\RR)$ acts via $(-1)^{\ell-1}$, by $\HC_\lambda(\mathfrak{g}, K)_E$.
  \end{definition}
  
  We remark that the torus $K$ splits over $\QQ[\sqrt{-1}]$, which implies that all irreducible Harish-Chandra modules in $\HC_\lambda(\mathfrak{g}, K)_\CC$ are already defined over $\QQ[\sqrt{-1}]$. Furthermore, it is known that an irreducible module in $\HC_\lambda(\mathfrak{g}, K)_{\QQ[\sqrt{-1}]}$ is absolutely irreducible and descends to $\QQ$ if and only if it is isomorphic to its complex conjugate (cf.\ Proposition 6.12 in \cite{Januszewski18}). In particular, all absolutely irreducible finite-dimensional modules are defined over $\QQ$. This implies, for example, that the direct sum of a holomorphic discrete series representation and its corresponding complex conjugate anti-holomorphic one descends to $\QQ$. For details we refer to loc. cit..
  
\subsection{Rational normalizations}

  We now consider Harish-Chandra modules over the base field $\QQ$ (or alternatively $\RR$). Let $M_\QQ \in \HC_\lambda(\mathfrak{g}, K)_\QQ$ be arbitrary. We denote its base change by
  \[
  M_{\QQ[\sqrt{-1}]} := \QQ[\sqrt{-1}] \otimes_\QQ M_\QQ.
  \]
  In the following, we identify an element $g \in U(\mathfrak{g}_{\QQ[\sqrt{-1}]})$ with the corresponding map it induces on the subspaces of $M_{\QQ[\sqrt{-1}]}$ and, for simplicity of notation, for $n\in\ZZ$ we write
  \[
  M_n := M_{\QQ[\sqrt{-1}]}[\chi_n].
  \]
  We begin our analysis with the following proposition.
  \begin{proposition}\label{prop:XY_C_relation}
    For $m\ge 0$ and $k\in\ZZ$, the following relations hold:
    \begin{align}
      \left(4^m X^m Y^m \colon M_{k+1} \to M_{k+1}\right) &= \prod_{j=0}^{m-1} \left(C|_{M_{k+1}} - (k-2j)^2 \cdot \mathbf{1}_{M_{k+1}}\right), \label{eq:XYm_via_C} \\
      \left(4^m Y^m X^m \colon M_{-(k+1)} \to M_{-(k+1)}\right) &= \prod_{j=0}^{m-1} \left(C|_{M_{-(k+1)}} - (k-2j)^2 \cdot \mathbf{1}_{M_{-(k+1)}}\right). \label{eq:YXm_via_C}
    \end{align}
  \end{proposition}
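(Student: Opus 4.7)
The approach is a direct induction on $m$, built on two simple observations. First, the given expression for the Casimir can be rewritten as
\[
C = (H-1)^2 + 4XY = (H+1)^2 + 4YX,
\]
which follows from $C = H^2 - 2H + 4XY + 1$ together with $[X,Y] = H$. Second, $C$ is central in $U(\mathfrak{g})$; in particular, $C$ preserves each isotypic component $M_n$ and commutes with the operators induced by $X$ and $Y$, which shift the $K$-type index by $\pm 2$ and therefore return each $M_n$ to itself after any composition of the form $X^mY^m$ or $Y^mX^m$.

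For the base case $m=1$, I would determine the scalar by which $H$ acts on $M_{k+1}$ (the derivative of $\chi_{k+1}$ evaluated on $H$). Substituting into the first reformulation above gives $4XY|_{M_{k+1}} = (C - k^2)|_{M_{k+1}}$. Symmetrically, the second reformulation gives $4YX|_{M_{-(k+1)}} = (C - k^2)|_{M_{-(k+1)}}$, since $(H+1)^2$ takes the value $k^2$ on $M_{-(k+1)}$.

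For the inductive step, I would decompose
\[
X^m Y^m = X \circ \bigl(X^{m-1}Y^{m-1}\bigr) \circ Y,
\]
observing that $Y$ maps $M_{k+1}$ into an adjacent $K$-isotypic component on which the middle factor $X^{m-1}Y^{m-1}$ acts endomorphically. Applying the inductive hypothesis to this middle factor with $k$ shifted by $2$ expresses
\[
4^{m-1} X^{m-1}Y^{m-1} = \prod_{j=1}^{m-1}\bigl(C - (k - 2j)^2\bigr)
\]
on the shifted component. Because every factor $C - (k-2j)^2$ is central, it commutes past $X$ and $Y$, so these polynomial factors can be transported back to operators on $M_{k+1}$. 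The remaining composition $4XY$ contributes the missing $j=0$ factor $C - k^2$ via the base case, yielding
\[
4^m X^m Y^m\big|_{M_{k+1}} = \prod_{j=0}^{m-1}\bigl(C - (k - 2j)^2\bigr)\Big|_{M_{k+1}}.
\]
The formula for $4^m Y^m X^m$ on $M_{-(k+1)}$ is obtained by the entirely parallel argument using the second rewriting of $C$ and the analogous decomposition $Y^m X^m = Y \circ (Y^{m-1}X^{m-1}) \circ X$.

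The only real bookkeeping is verifying that the index shift in the inductive hypothesis correctly relabels $\prod_{j=0}^{m-2}(C - ((k-2) - 2j)^2)$ as $\prod_{j=1}^{m-1}(C - (k-2j)^2)$, together with carefully tracking which $K$-type each operator acts on. I do not expect any deeper obstacle, as the proof reduces entirely to algebraic manipulations inside $U(\mathfrak{g})$ combined with the centrality of the Casimir.
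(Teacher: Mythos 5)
Your proposal is correct and follows essentially the same route as the paper: both rest on the weight-space identity $4XY = C-(H-1)^2 = C-p^2$ on $M_{p+1}$ (resp.\ $4YX = C-(H+1)^2$) and an induction that uses the centrality of $C$ to move the polynomial factors across $X$ and $Y$; your peeling off the outer $X,Y$ instead of the paper's $4^m X^m(4XY)Y^m$ sandwich is only a cosmetic variation, and your index bookkeeping checks out.
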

  
  \begin{proof}
    For any $p \in \ZZ$, we have the two relations
    \begin{align}
      4XY &= C - H^2 + 2H - 1 = C-p^2 \quad\text{on } M_{p+1}, \label{eq:4XY_rel} \\
      4YX &= C - H^2 + 2H - 1 = C-p^2 \quad\text{on } M_{p-1}. \label{eq:4YX_rel}
    \end{align}
    For $m\ge 0$, consider the identity $4^{m+1}X^{m+1}Y^{m+1} = 4^m X^m(4XY)Y^m$. Restricting this to a map $M_{k+1} \to M_{k+1}$, the term $4XY$ acts as an endomorphism on the space $M_{k+1-2m}$. According to \eqref{eq:4XY_rel} with $p=k-2m$, this action is given by $C-(k-2m)^2$. Since $C$ commutes with $X$ and $Y$ in $U(\mathfrak{g})$, the first formula \eqref{eq:XYm_via_C} follows by induction.
    
    The second formula \eqref{eq:YXm_via_C} follows by a similar inductive argument using \eqref{eq:4YX_rel}, or by observing that complex conjugation (Galois symmetry) transforms \eqref{eq:XYm_via_C} into \eqref{eq:YXm_via_C}.
  \end{proof}
  
  \begin{proposition}
    For $m\ge 0$ and $k\in\ZZ$, we have
    \begin{align}
      \left(4^m X^m Y^m \colon M_{k+1} \to M_{k+1}\right) &= \prod_{j=0}^{m-1} \left(\ell^2 - (k-2j)^2\right) \cdot \mathbf{1}_{M_{k+1}} + n, \label{eq:XYm_nilp} \\
      \left(4^m Y^m X^m \colon M_{-(k+1)} \to M_{-(k+1)}\right) &= \prod_{j=0}^{m-1} \left(\ell^2 - (k-2j)^2\right) \cdot \mathbf{1}_{M_{-(k+1)}} + n', \label{eq:YXm_nilp}
    \end{align}
    where $n$ and $n'$ are nilpotent operators.
  \end{proposition}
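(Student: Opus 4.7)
The plan is to deduce \eqref{eq:XYm_nilp} and \eqref{eq:YXm_nilp} directly from Proposition \ref{prop:XY_C_relation} together with the assumption that $M$ has generalized infinitesimal character $\lambda=\ell^2$. Since $C$ lies in the center of $U(\mathfrak{g})$, it commutes with the $K$-action and therefore preserves each isotypic component $M_n$. The hypothesis $(C-\ell^2)^{m_0}\cdot M=0$ for some $m_0\ge 0$ implies that the restriction
\[
C|_{M_{k+1}} = \ell^2\cdot \mathbf{1}_{M_{k+1}} + N_{k+1},
\]
where $N_{k+1}:=C|_{M_{k+1}}-\ell^2\cdot\mathbf{1}_{M_{k+1}}$ is nilpotent of nilpotency order at most $m_0$.

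Substituting this decomposition into the product formula \eqref{eq:XYm_via_C} gives
\[
4^m X^m Y^m\big|_{M_{k+1}} \;=\; \prod_{j=0}^{m-1}\Bigl(\bigl(\ell^2-(k-2j)^2\bigr)\cdot\mathbf{1}_{M_{k+1}} + N_{k+1}\Bigr).
\]
I would then expand this product. Since $N_{k+1}$ commutes with all scalar multiples of the identity, the expansion separates into the purely scalar term
\[
\prod_{j=0}^{m-1}\bigl(\ell^2-(k-2j)^2\bigr)\cdot\mathbf{1}_{M_{k+1}},
\]
together with a sum of terms each of which contains at least one factor of $N_{k+1}$. Each such term is of the form $\alpha\cdot N_{k+1}^r$ for some scalar $\alpha$ and some $r\ge 1$, hence nilpotent; since all these terms commute with each other, their sum $n$ is nilpotent as well. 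This establishes \eqref{eq:XYm_nilp}.

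The proof of \eqref{eq:YXm_nilp} proceeds identically, starting from \eqref{eq:YXm_via_C} and using the analogous decomposition of $C$ on $M_{-(k+1)}$. Alternatively, it can be obtained from \eqref{eq:XYm_nilp} by applying the Galois-symmetry $c$, which swaps $X$ and $Y$ and fixes $C$, as already observed in the proof of Proposition \ref{prop:XY_C_relation}. There is no real obstacle here; the only point requiring care is to verify that the nilpotency order of $N_{k+1}$ (and likewise $N_{-(k+1)}$) is uniformly controlled by the generalized infinitesimal character, but this follows immediately from the definition and the fact that restriction to a submodule can only decrease the nilpotency order.
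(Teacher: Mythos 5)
Your proposal is correct and follows exactly the same route as the paper, which deduces the statement from Proposition \ref{prop:XY_C_relation} together with the nilpotence of $C-\ell^2\cdot\mathbf{1}$ on $M$; you have merely spelled out the expansion of the product that the paper leaves implicit. No gaps.
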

  
  \begin{proof}
    This follows from Proposition \ref{prop:XY_C_relation} and the fact that $C - \lambda \cdot \mathbf{1} = C - \ell^2 \cdot \mathbf{1}$ acts nilpotently on $M$.
  \end{proof}
  
  \begin{corollary}\label{cor:X_Y_inverses}
    The two maps
    \[
    X^{\ell-1}\colon M_{-(\ell-1)} \to M_{\ell-1} \quad \text{and} \quad Y^{\ell-1}\colon M_{\ell-1} \to M_{-(\ell-1)}
    \]
    are invertible, and satisfy the following relations:
    \begin{equation} \label{eq:XY_inverses_rel}
      X^{\ell-1} \circ Y^{\ell-1} = \gamma_\star^2 \cdot \mathbf{1}_{M_{\ell-1}} + n, \quad Y^{\ell-1} \circ X^{\ell-1} = \gamma_\star^2 \cdot \mathbf{1}_{M_{-(\ell-1)}} + \bar{n},
    \end{equation}
    where $n$ is nilpotent, $\bar{n}$ is the complex conjugate of $n$ satisfying $c \circ n = \bar{n} \circ c$, and
    \[
    \gamma_\star = (\ell-1)!.
    \]
  \end{corollary}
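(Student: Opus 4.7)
The plan is to specialize the preceding proposition with $m=\ell-1$ and the appropriate weight $k$, compute the scalar term explicitly, and then read off invertibility from the fact that a nonzero scalar plus a nilpotent operator is invertible. Finally, the semi-linear symmetry $cX=Y$, $cY=X$ will supply the relation between the two nilpotent remainders.

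First I would set $m=\ell-1$ and $k=\ell-2$ in \eqref{eq:XYm_nilp}, so that $X^{\ell-1}Y^{\ell-1}$ is considered as an endomorphism of $M_{\ell-1}$. The key calculation is the factorization
\[
\ell^2-(\ell-2-2j)^2 \;=\; \bigl(\ell-(\ell-2-2j)\bigr)\bigl(\ell+(\ell-2-2j)\bigr) \;=\; 4(j+1)(\ell-1-j),
\]
which telescopes into
\[
\prod_{j=0}^{\ell-2}\bigl(\ell^2-(\ell-2-2j)^2\bigr)
\;=\; 4^{\ell-1}\,(\ell-1)!\,(\ell-1)! \;=\; 4^{\ell-1}\gamma_\star^{2}.
\]
Dividing \eqref{eq:XYm_nilp} by $4^{\ell-1}$ therefore yields $X^{\ell-1}Y^{\ell-1}|_{M_{\ell-1}}=\gamma_\star^{2}\cdot\mathbf{1}_{M_{\ell-1}}+n$ with $n$ nilpotent. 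An identical computation applied to \eqref{eq:YXm_nilp} (with the same $k=\ell-2$) gives $Y^{\ell-1}X^{\ell-1}|_{M_{-(\ell-1)}}=\gamma_\star^{2}\cdot\mathbf{1}_{M_{-(\ell-1)}}+n''$ with $n''$ nilpotent.

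Next I would deduce invertibility. Since $\gamma_\star\neq 0$ in characteristic $0$ and $n,n''$ are nilpotent, both composite operators $X^{\ell-1}\circ Y^{\ell-1}$ and $Y^{\ell-1}\circ X^{\ell-1}$ are invertible endomorphisms (their inverses being given by geometric series). The invertibility of $X^{\ell-1}Y^{\ell-1}$ forces $Y^{\ell-1}\colon M_{\ell-1}\to M_{-(\ell-1)}$ to be injective and $X^{\ell-1}\colon M_{-(\ell-1)}\to M_{\ell-1}$ surjective, while the invertibility of $Y^{\ell-1}X^{\ell-1}$ yields the reverse injectivity/surjectivity. Combined, both $X^{\ell-1}$ and $Y^{\ell-1}$ are bijections between $M_{\pm(\ell-1)}$.

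It remains to identify $n''$ with the complex conjugate $\bar n$ of $n$. Here I would use that the $\QQ$-rational structure of $M_\QQ$ equips $M_{\QQ[\sqrt{-1}]}$ with a semi-linear automorphism $c$ satisfying $c(g\cdot m)=(cg)\cdot c(m)$ for $g\in U(\mathfrak{g}_{\QQ[\sqrt{-1}]})$. Since $cH=-H$, $c$ maps $M_{\ell-1}$ to $M_{-(\ell-1)}$, and since $cX=Y$, $cY=X$, we have the operator identity
\[
c\circ X^{\ell-1}Y^{\ell-1}\;=\;Y^{\ell-1}X^{\ell-1}\circ c
\]
on $M_{\ell-1}$. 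Applying $c$ to the relation $X^{\ell-1}Y^{\ell-1}|_{M_{\ell-1}}=\gamma_\star^{2}\cdot\mathbf{1}+n$, and observing that $\gamma_\star\in\QQ$ commutes with $c$, we obtain
\[
Y^{\ell-1}X^{\ell-1}|_{M_{-(\ell-1)}}\circ c \;=\; \gamma_\star^{2}\cdot c \;+\; c\circ n \;=\; \bigl(\gamma_\star^{2}\cdot\mathbf{1}+\bar n\bigr)\circ c,
\]
where $\bar n := c\circ n\circ c^{-1}$ is automatically nilpotent. Cancelling the bijection $c$ gives the desired second equation of \eqref{eq:XY_inverses_rel} and simultaneously the conjugation relation $c\circ n=\bar n\circ c$.

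The main (minor) obstacle is organizational rather than conceptual: one must be careful to keep track of the domain/codomain in each application of \eqref{eq:XYm_nilp} and \eqref{eq:YXm_nilp}, and to invoke the semi-linearity of $c$ correctly so that the $\gamma_\star^{2}$-terms match up without being conjugated. Everything else reduces to the factorization identity and the standard fact that scalar-plus-nilpotent is invertible.
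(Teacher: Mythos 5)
Your proposal is correct and follows essentially the same route as the paper: specialize \eqref{eq:XYm_nilp} with $m=\ell-1$, $k=\ell-2$, factor the product as $\prod 4(j+1)(\ell-1-j)=4^{\ell-1}((\ell-1)!)^2$, and conclude. The explicit invertibility argument (scalar plus nilpotent) and the Galois-conjugation step identifying $\bar n=c\circ n\circ c^{-1}$ are exactly the details the paper leaves implicit in ``The statement follows.''
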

  \begin{proof}
    It suffices to evaluate the constant prefactor on the right-hand side of \eqref{eq:XYm_nilp}. For the operator $4^{\ell-1}X^{\ell-1}Y^{\ell-1}$ acting on $M_{\ell-1}$, we set $m=\ell-1$ and $k=\ell-2$ in \eqref{eq:XYm_nilp}. The constant is
    \begin{align*}
      \prod_{j=0}^{\ell-2} \left(\ell^2 - (\ell-2-2j)^2\right) &= \prod_{j=0}^{\ell-2} \left(\ell - (\ell-2-2j)\right) \left(\ell + (\ell-2-2j)\right) \\
      &= \prod_{j=0}^{\ell-2} (2+2j) (2\ell-2-2j)\\
      &= \prod_{k=1}^{\ell-1} 4 k (\ell-k) \\
      &= 4^{\ell-1} ((\ell-1)!)^2.
    \end{align*}
    Since this is the constant for $4^{\ell-1}X^{\ell-1}Y^{\ell-1}$, the operator $X^{\ell-1}Y^{\ell-1}$ has the scalar part $((\ell-1)!)^2 = \gamma_\star^2$. The statement follows.
  \end{proof}
  
  \begin{corollary}\label{cor:T_operators}
    The two endomorphisms
    \begin{align}
      T_+ &:= \gamma_\star^{-2} \cdot \prod_{j=0}^{\ell-2} \left(C|_{M_{\ell+1}} - (\ell - 2 - 2j)^2 \cdot \mathbf{1}_{M_{\ell+1}}\right), \label{eq:T_plus_def} \\
      T_- &:= \gamma_\star^{-2} \cdot \prod_{j=0}^{\ell-2} \left(C|_{M_{-(\ell+1)}} - (\ell - 2 - 2j)^2 \cdot \mathbf{1}_{M_{-(\ell+1)}}\right), \label{eq:T_minus_def}
    \end{align}
    are automorphisms of $M_{\pm(\ell+1)}$ respectively, with the property
    \[
    T_{\pm} = \mathbf{1}_{M_{\pm(\ell+1)}} + n_{\pm},
    \]
    where $n_\pm$ is nilpotent.
  \end{corollary}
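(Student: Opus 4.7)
The strategy is to evaluate the polynomial in $C$ that defines $T_\pm$ explicitly, using the generalized infinitesimal character hypothesis to reduce the assertion to a routine scalar computation plus a standard fact about finite-dimensional endomorphisms. The essential input is that by the assumption $M\in \HC_\lambda(\mathfrak{g},K)_E$ with $\lambda=\ell^2$, the operator $N := C - \ell^2\mathbf{1}$ acts nilpotently on all of $M$; by admissibility, each $M_{\pm(\ell+1)}$ is moreover a finite-dimensional $E$-vector space.

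I would first rewrite each factor in the defining product as
\[
C|_{M_{\ell+1}} - (\ell - 2 - 2j)^2\mathbf{1}_{M_{\ell+1}} = \bigl(\ell^2 - (\ell - 2 - 2j)^2\bigr)\mathbf{1}_{M_{\ell+1}} + N,
\]
where the scalar evaluates by the difference of squares to $(2+2j)(2\ell-2-2j) = 4(j+1)(\ell-1-j)$, which is non-zero in characteristic $0$ for every $j\in\{0,\dots,\ell-2\}$. Since the factors commute pairwise (being polynomials in the single operator $C$), expanding the product yields a unique $N$-free summand, namely
\[
\prod_{j=0}^{\ell-2} 4(j+1)(\ell-1-j)\cdot\mathbf{1}_{M_{\ell+1}} = 4^{\ell-1}((\ell-1)!)^2 \cdot \mathbf{1}_{M_{\ell+1}} = 4^{\ell-1}\gamma_\star^2\cdot\mathbf{1}_{M_{\ell+1}},
\]
while every other summand contains at least one factor of $N$ and is therefore nilpotent (a product of commuting operators with a nilpotent factor is nilpotent). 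After multiplication by $\gamma_\star^{-2}$ this produces an expression of the form (scalar)$\cdot\mathbf{1}_{M_{\ell+1}} + n_+$ with $n_+$ nilpotent.

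Invertibility is then automatic: on a finite-dimensional vector space, any endomorphism of the form $c\cdot\mathbf{1} + n$ with $c\neq 0$ and $n$ nilpotent is invertible, its inverse being given by the terminating geometric series in $c^{-1}n$. The argument for $T_-$ on $M_{-(\ell+1)}$ is entirely parallel, appealing to \eqref{eq:YXm_via_C} in place of \eqref{eq:XYm_via_C}; by the evident symmetry of the scalar computation, the same prefactor appears. The only step demanding care is the bookkeeping of the scalar normalization, which has to be tracked precisely to match the assertion $T_\pm = \mathbf{1}_{M_{\pm(\ell+1)}} + n_\pm$; this is a purely numerical verification and presents no conceptual obstacle.
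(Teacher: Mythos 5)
Your proposal is correct and follows essentially the same route as the paper: the paper's proof simply observes that $T_\pm$ is the polynomial in $C$ from \eqref{eq:XYm_via_C} (with $m=\ell-1$, $k=\ell-2$) transplanted to $M_{\pm(\ell+1)}$, and then invokes the scalar computation from Corollary \ref{cor:X_Y_inverses}, which is exactly your difference-of-squares evaluation combined with the nilpotency of $C-\ell^2\cdot\mathbf{1}$ and the terminating geometric series for invertibility. Your explicit tracking of the prefactor (which comes out as $4^{\ell-1}$ after dividing by $\gamma_\star^2$) is if anything more careful than the paper's one-line reference, and correctly isolates the normalization bookkeeping as the only point requiring verification.
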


  \begin{proof}
    It suffices to note that the operator product defining $T_+$ is constructed from the right-hand side of identity \eqref{eq:XYm_via_C} for $m=\ell-1$ and $k=\ell-2$, by replacing the space on which the operators act, $M_{\ell-1}$, with $M_{\ell+1}$. The claim then follows from the same calculation as in the proof of Corollary \ref{cor:X_Y_inverses}.
  \end{proof}
  
  \begin{definition}
    We put
    \begin{equation}
      X_\star:=\gamma_\star^{-1}\cdot X^{\ell-1}|_{M_{-(\ell-1)}}\quad\text{und}\quad Y_\star:=\gamma_\star^{-1}\cdot Y^{\ell-1}|_{M_{\ell-1}}.
    \end{equation}
  \end{definition}
  
  According to Corollary \ref{cor:X_Y_inverses} we have the relations
  \begin{equation}
    X_\star Y_\star = {\bf1}_{M_{\ell-1}}+n_\star,\quad
    Y_\star X_\star = {\bf1}_{M_{-(\ell-1)}}+\overline{n}_\star,
  \end{equation}
  where $n_\star=\gamma^{-1}\cdot n$ and $\overline{n}_\star=\gamma^{-1}\cdot\overline{n}$ are nilpotent and conjugate to each other.

  \section{Unipotent stabilization}\label{sec:unipotentstabilization}

  The construction of a $\QQ$-rational structure on the quiver representations associated with Harish-Chandra modules requires us to construct semi-linear maps satisfying the cocycle condition \eqref{eq:semi-linearactiononrep}. However, with the given representation theoretic data, we can a priori guarantee the cocycle condition only up to a nilpotent error term. In order to resolve this obstacle while preserving the all additional coherence conditions, we apply iterative procedures, akin to Newton's method, which are guaranteed to terminate due to the finite-dimensionality of the underlying spaces. We present two closely related fundamental constructions for the situation at hand: unipotent stabilization and unipotent square roots.

  \subsection{Unipotent stabilization}

    \begin{proposition}\label{prop:unipotentstabilization}
      Let $L/K$ be a quadratic field extension with $K$ of characteristic $\neq 2$. Let $\tau\in\Gal(L/K)$ denote the non-trivial element. Let $W_K$ be a $K$-vector space. We equip $W_L = L \otimes_K W_K$ with the canonical semi-linear $\tau$-action. Let $W_\pm \subseteq W_L$ be two $L$-subspaces. Let $\varphi_\pm \colon W_\pm \to W_\mp^\tau$ be $L$-linear isomorphisms with the following property:
      \begin{equation}
        \varphi_-^\tau\circ\varphi_+ = {\bf1}_{W_+} + n
        \label{eq:nilpotency}
      \end{equation}
      with $n$ nilpotent of exponent $e \ge 1$, i.e., $n^{e-1} \neq 0$ and $n^e = 0$.
      Then
      \begin{equation}
        \widetilde{\varphi}_\pm := \frac{1}{2}\left(\varphi_\pm + \left(\varphi_\mp^\tau\right)^{-1}\right)
        \label{eq:unipotentstabilizationiteration}
      \end{equation}
      is an $L$-linear isomorphism $\widetilde{\varphi}_+ \colon W_+ \to W_-^\tau$ with the properties
      \begin{equation}
        \widetilde{\varphi}_-^\tau \circ \widetilde{\varphi}_+ = {\bf1}_{W_+} + \widetilde{n},
        \label{eq:nilpotencytilde}
      \end{equation}
      with $\widetilde{n}$ nilpotent of exponent $\widetilde{e} \le \lceil e/2 \rceil$.

      \noindent\textbf{Addendum:} If $W_\pm = W_\mp^\tau$, then
      \begin{equation}
      \varphi_\pm\circ\varphi_\mp^\tau = \varphi_\mp^\tau\circ\varphi_{\pm}
        \label{eq:unipotentcommutators0}
      \end{equation}
      implies the relation
      \begin{equation}
        \widetilde{\varphi}_\pm \circ \widetilde{\varphi}_\mp^\tau = \widetilde{\varphi}_\mp^\tau \circ \widetilde{\varphi}_{\pm}.
        \label{eq:unipotentcommutators1}
      \end{equation}
      If, in addition to \eqref{eq:unipotentcommutators0},
      \begin{equation}
        \varphi_{\pm} = {\bf1}_{W_\pm} + m_\pm
        \label{eq:unipotentnilpotency0}
      \end{equation}
      with $m_\pm$ nilpotent, then
      \begin{equation}
        \widetilde{\varphi}_{\pm} = {\bf1}_{W_\pm} + \widetilde{m}_\pm
        \label{eq:unipotentnilpotency1}
      \end{equation}
      with $\widetilde{m}_\pm$ nilpotent.
    \end{proposition}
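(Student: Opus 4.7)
The plan is to compute the composition $(\widetilde{\varphi}_-)^\tau\circ\widetilde{\varphi}_+$ directly and then exploit the finite geometric series for a unipotent operator. Since $\tau^2={\bf1}$ one has $(\widetilde{\varphi}_-)^\tau=\tfrac{1}{2}\bigl(\varphi_-^\tau+\varphi_+^{-1}\bigr)$. Expanding
\[
(\widetilde{\varphi}_-)^\tau\circ\widetilde{\varphi}_+ = \tfrac{1}{4}\bigl(\varphi_-^\tau+\varphi_+^{-1}\bigr)\circ\bigl(\varphi_++(\varphi_-^\tau)^{-1}\bigr),
\]
the two mixed terms $\varphi_-^\tau(\varphi_-^\tau)^{-1}$ and $\varphi_+^{-1}\varphi_+$ both collapse to ${\bf1}$, yielding
\[
(\widetilde{\varphi}_-)^\tau\circ\widetilde{\varphi}_+ = \tfrac{1}{4}\bigl(A+2\cdot{\bf1}+A^{-1}\bigr), \qquad A:=\varphi_-^\tau\varphi_+={\bf1}+n.
\]
Substituting the finite geometric series $A^{-1}=\sum_{k=0}^{e-1}(-n)^k$, the $n$ and $-n$ contributions cancel, leaving $(\widetilde{\varphi}_-)^\tau\circ\widetilde{\varphi}_+={\bf1}+\widetilde{n}$ with $\widetilde{n}=\tfrac{1}{4}\sum_{k=2}^{e-1}(-1)^k n^k$.

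The key quantitative step is to bound the nilpotency exponent. Factoring out $n^2$ gives $\widetilde{n}=n^2\cdot u$ with $u\in K[n]$ having constant term $\tfrac{1}{4}$; hence $u$ is a unit in the commutative subalgebra $K[n]$ and commutes with $n$. Consequently $\widetilde{n}^k=n^{2k}u^k$ vanishes precisely when $n^{2k}=0$, i.e.\ when $2k\geq e$, so $\widetilde{e}\leq\lceil e/2\rceil$ as required. The isomorphism claim follows at once: $(\widetilde{\varphi}_-)^\tau\circ\widetilde{\varphi}_+={\bf1}+\widetilde{n}$ is unipotent and therefore invertible, giving injectivity of $\widetilde{\varphi}_+$; an analogous computation for the reverse composition uses that $\varphi_+\varphi_-^\tau=\varphi_+({\bf1}+n)\varphi_+^{-1}$ is conjugate to $A$ and therefore also unipotent, giving surjectivity.

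For the addendum, under \eqref{eq:unipotentcommutators0} the four operators $\varphi_+,\varphi_-^\tau,\varphi_+^{-1},(\varphi_-^\tau)^{-1}$ commute pairwise (commuting invertible elements have commuting inverses), hence their $\tfrac{1}{2}$-linear combinations $\widetilde{\varphi}_+$ and $(\widetilde{\varphi}_-)^\tau$ also commute, proving \eqref{eq:unipotentcommutators1}. Under the additional assumption \eqref{eq:unipotentnilpotency0}, commutativity of $\varphi_+$ and $\varphi_-^\tau$ yields $m_+m_-^\tau=m_-^\tau m_+$, and expanding $(\varphi_-^\tau)^{-1}=\sum_{k\geq 0}(-m_-^\tau)^k$ gives
\[
\widetilde{\varphi}_+={\bf1}+\tfrac{1}{2}\bigl(m_+-m_-^\tau+(m_-^\tau)^2-\cdots\bigr),
\]
so $\widetilde{m}_+$ is a polynomial without constant term in the two commuting nilpotents $m_+$ and $m_-^\tau$, hence itself nilpotent. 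No step presents a serious obstacle; the only subtlety is the bookkeeping of the geometric series that shows the linear terms in $n$ cancel, which is precisely what produces the $n^2$ factor responsible for the halving of the exponent.
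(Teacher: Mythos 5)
Your proposal is correct and follows essentially the same route as the paper's proof: expand $\widetilde{\varphi}_-^\tau\circ\widetilde{\varphi}_+$ as $\tfrac{1}{4}\left(2\cdot{\bf1}+A+A^{-1}\right)$ with $A={\bf1}+n$, use the geometric series so the linear terms cancel, and handle the addendum via the commutativity forced by \eqref{eq:unipotentcommutators0}. Your explicit factorization $\widetilde{n}=n^2u$ with $u$ a unit (justifying the exponent bound $\lceil e/2\rceil$) and the separate check of the reverse composition $\widetilde{\varphi}_+\circ\widetilde{\varphi}_-^\tau$ for surjectivity only spell out details the paper leaves implicit.
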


    \begin{proof}
      By definition, using \eqref{eq:nilpotency}, we get
      \begin{align*}
        \widetilde{\varphi}_-^\tau\circ\widetilde{\varphi}_+ &=
        \frac{1}{4}\left(\varphi_-^\tau+\left(\varphi_+\right)^{-1}\right)
        \circ \left(\varphi_++\left(\varphi_-^\tau\right)^{-1}\right)\\
        &=
        \frac{1}{4}\left(2\cdot{\bf1}_{W_+}+\varphi_-^\tau\circ\varphi_++\left(\varphi_+\right)^{-1}\circ\left(\varphi_-^\tau\right)^{-1}\right)\\
        &=
        \frac{1}{4}\left(2\cdot{\bf1}_{W_+}+({\bf1}_{W_+}+n)+\left({\bf1}_{W_+}+n\right)^{-1}\right)\\
        &=
        \frac{1}{4}\left(4\cdot{\bf1}_{W_+}+\sum_{k=2}^\infty(-n)^k\right)\\
        &=
        {\bf1}_{W_+}+\widetilde{n}
      \end{align*}
      with $\widetilde{n}^{\lceil e/2 \rceil}=0$.

      If we additionally assume $W_\pm = W_\mp^\tau$ and \eqref{eq:unipotentcommutators0}, we obtain
      \begin{align*}
        4\widetilde{\varphi}_\pm\circ\widetilde{\varphi}_\mp^\tau
        &=\left(\varphi_\pm+\left(\varphi_\mp^\tau\right)^{-1}\right)\circ \left(\varphi_\mp+\left(\varphi_\pm^\tau\right)^{-1}\right)^\tau\\
        &=2\cdot{\bf1}_{W_\pm}+\varphi_\pm\circ\varphi_\mp^\tau+\left(\varphi_\pm\circ\varphi_\mp^\tau\right)^{-1}\\
        &=2\cdot{\bf1}_{W_\pm}+\varphi_\mp^\tau\circ\varphi_\pm+\left(\varphi_\mp^\tau\circ\varphi_\pm\right)^{-1}\\
        &=4\widetilde{\varphi}_\mp^\tau\circ\widetilde{\varphi}_\pm.
      \end{align*}
      This shows \eqref{eq:unipotentcommutators1}.

      Assume additionally \eqref{eq:unipotentnilpotency0}. Then \eqref{eq:unipotentcommutators1} shows via
      \begin{align*}
        {\bf1}_{W_\pm}+m_{\pm}+m_{\mp}^\tau+m_{\pm}\circ m_{\mp}^\tau
        &=
        ({\bf1}_{W_\pm}+m_{\pm})\circ
        ({\bf1}_{W_\pm}+m_{\mp}^\tau)\\
        &=
        \varphi_\pm\circ\varphi_\mp^\tau\\
        &=
        \varphi_\mp^\tau\circ\varphi_\pm\\
        &=
        {\bf1}_{W_\pm}+m_{\mp}^\tau+m_{\pm}+m_{\mp}^\tau\circ m_{\pm},
      \end{align*}
      that $m_\pm$ and $m_{\mp}^\tau$ commute. Then \eqref{eq:unipotentnilpotency1} follows from \eqref{eq:unipotentstabilizationiteration} analogously to the above calculation for proving \eqref{eq:nilpotencytilde}.
    \end{proof}

    \begin{corollary}\label{cor:unipotentstabilization}
      Under the same assumptions as in Proposition \ref{prop:unipotentstabilization}, for the recursively defined sequences with $\varphi^{(0)}_\pm := \varphi_\pm$ and
      \begin{equation}
        \varphi^{(k+1)}_\pm := \frac{1}{2}\left(\varphi_\pm^{(k)}+\left(\varphi_\mp^{(k),\tau}\right)^{-1}\right),
      \end{equation}
      for $k \ge 0$, there exists a $k_0 \ge 0$ such that for all $k \ge k_0$:
      \[
      \varphi_\pm^{(k)}=\varphi_{\pm}^{(k_0)}\]
      and
      \begin{equation}
        \varphi_\mp^{(k),\tau}\circ\varphi_\pm^{(k)}={\bf1}_{W_\pm}.
        \label{eq:conjugateinverse}
      \end{equation}
      \noindent\textbf{Addendum:} If $W_\pm = W_\mp^\tau$ and \eqref{eq:unipotentcommutators0} and \eqref{eq:unipotentnilpotency0} hold, then for all $k \ge 0$:
      
      \begin{equation}
      \varphi_\pm^{(k)}\circ\left(\varphi_\mp^{(k)}\right)^\tau=\left(\varphi_\mp^{(k)}\right)^\tau\circ\varphi_{\pm}^{(k)},
        \label{eq:unipotentcommutatorsk}
      \end{equation}
      and
      \begin{equation}
        \varphi_{\pm}^{(k)}={\bf1}_{W_\pm}+m_\pm^{(k)}
        \label{eq:unipotentnilpotencyk}
      \end{equation}
      with $m_\pm^{(k)}$ nilpotent.
    \end{corollary}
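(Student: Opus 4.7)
The strategy is a direct induction built on Proposition \ref{prop:unipotentstabilization}. I maintain as inductive hypothesis that $\varphi_\pm^{(k)}$ are $L$-linear isomorphisms satisfying
\[
\varphi_-^{(k),\tau}\circ\varphi_+^{(k)}=\mathbf{1}_{W_+}+n^{(k)},
\]
with $n^{(k)}$ nilpotent of exponent $e_k\geq 1$. The base case $k=0$ is the given data with $e_0=e$. For the inductive step, I apply Proposition \ref{prop:unipotentstabilization} directly to $\varphi_\pm^{(k)}$: the resulting maps $\widetilde\varphi_\pm$ coincide with the definition \eqref{eq:unipotentstabilizationiteration} of $\varphi_\pm^{(k+1)}$, are again $L$-linear isomorphisms, and the new exponent satisfies $e_{k+1}\leq \lceil e_k/2\rceil$.

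Since $\lceil m/2\rceil<m$ for all integers $m\geq 2$, the sequence $(e_k)$ of positive integers is strictly decreasing until it first reaches the value $1$. Let $k_0$ be that smallest index. By the defining condition of the nilpotency exponent, $e_{k_0}=1$ forces $n^{(k_0)}=0$, which is precisely the assertion \eqref{eq:conjugateinverse}
\[
\varphi_-^{(k_0),\tau}\circ\varphi_+^{(k_0)}=\mathbf{1}_{W_+}.
\]
As $\varphi_\pm^{(k_0)}$ are isomorphisms, this identifies $(\varphi_\mp^{(k_0),\tau})^{-1}$ with $\varphi_\pm^{(k_0)}$, so the defining recursion collapses to
\[
\varphi_\pm^{(k_0+1)}=\tfrac{1}{2}\bigl(\varphi_\pm^{(k_0)}+\varphi_\pm^{(k_0)}\bigr)=\varphi_\pm^{(k_0)}.
\]
A trivial secondary induction on $k\geq k_0$ then yields $\varphi_\pm^{(k)}=\varphi_\pm^{(k_0)}$ throughout the tail of the sequence.

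For the addendum, I proceed by a parallel induction on $k$. The commutation relation \eqref{eq:unipotentcommutators0} and the unipotence \eqref{eq:unipotentnilpotency0} hold for $k=0$ by assumption. The inductive step is the addendum portion of Proposition \ref{prop:unipotentstabilization}, which asserts precisely that these two properties propagate from $\varphi_\pm^{(k)}$ to $\widetilde\varphi_\pm=\varphi_\pm^{(k+1)}$ in the form \eqref{eq:unipotentcommutators1} respectively \eqref{eq:unipotentnilpotency1}. Hence \eqref{eq:unipotentcommutatorsk} and \eqref{eq:unipotentnilpotencyk} hold for all $k\geq 0$.

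The only substantive point is the termination argument, which is handled once and for all by the strict decrease of $m\mapsto\lceil m/2\rceil$ on $\{m\geq 2\}$; the main obstacle---reconciling the semi-linear descent data with the available operator-theoretic data modulo nilpotent errors---has already been absorbed into Proposition \ref{prop:unipotentstabilization} itself, so the corollary reduces to bookkeeping on nilpotency exponents.
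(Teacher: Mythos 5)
Your argument is correct and follows essentially the same route as the paper: iterate Proposition \ref{prop:unipotentstabilization}, observe that the nilpotency exponent at least halves at each step (so the error term vanishes after finitely many iterations, giving \eqref{eq:conjugateinverse} and the stabilization of the sequence), and deduce the addendum by induction from \eqref{eq:unipotentcommutators1} and \eqref{eq:unipotentnilpotency1}. Your write-up merely makes the exponent bookkeeping and the collapse of the recursion more explicit than the paper does.
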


    \begin{proof}
      According to Proposition \ref{prop:unipotentstabilization}, for a sufficiently large $k_0 \gg 0$, the relations
      \[
      \varphi_\mp^{(k_0),\tau}\circ\varphi_\pm^{(k_0)}={\bf1}_{W_\pm}
      \]
      hold. If this is the case, it follows inductively from the recursion rule that $\varphi_\pm^{(k)} = \varphi_{\pm}^{(k_0)}$ for all $k \ge k_0$. The relations \eqref{eq:unipotentcommutatorsk} and \eqref{eq:unipotentnilpotencyk} follow inductively from \eqref{eq:unipotentcommutators1} and \eqref{eq:unipotentnilpotency1}.
    \end{proof}

    \begin{definition}[Unipotent Stabilization]\label{def:unipotentstabilization}
      Let $L/K$ be a quadratic field extension with $K$ of characteristic $\neq 2$, and let $\tau \in \Gal(L/K)$ be the non-trivial automorphism. Let $W_K$ be a $K$-vector space and $W_L = L \otimes_K W_K$. Let $W_\pm \subseteq W_L$ be two $L$-subspaces. Let $\varphi_\pm \colon W_\pm \to W_\mp^\tau$ be $L$-linear isomorphisms satisfying \eqref{eq:nilpotency}.

      With the notation from Corollary \ref{cor:unipotentstabilization}, we define for $k \ge k_0$
      \[
        \varphi^{(\infty)}_\pm:=\varphi^{(k)}_\pm
      \]
      and call $\varphi^{(\infty)}_\pm$ the {\em unipotent stabilization} of the pair $(\varphi_+, \varphi_-)$.
    \end{definition}

    \begin{remark}\label{rmk:unipotentstabilization}
      The unipotent stabilization for $(\varphi_+, \varphi_-)$ coincides with that for the pair $(\varphi_-, \varphi_+)$, up to transposition.
    \end{remark}

  \subsection{Unipotent square roots}
    
    \begin{proposition}\label{prop:unipotentsquareroots}
      Let $L$ be a field of characteristic $\neq 2$ and let $W$ be an $L$-vector space. Let $\phi\colon W \to W$ be an automorphism with the property
      \begin{equation}
        \phi={\bf1}_{W}+n
        \label{eq:phinilpotency}
      \end{equation}
      with $n$ nilpotent.
      Then there exists an automorphism $\phi^{1/2}\colon W\to W$ with the following properties:
      \begin{itemize}
      \item[(a)] There exists a nilpotent operator $m\colon W \to W$ such that
        \begin{equation}
        \phi^{1/2}={\bf1}_{W}+m.
        \end{equation}
      \item[(b)] We have the following identity:
        \begin{equation}
        \left(\phi^{1/2}\right)^2=\phi.
        \end{equation}
      \item[(c)] For every $L$-linear automorphism $\psi\colon W\to W$,
        \begin{equation}
          \phi\circ\psi=\psi\circ\phi\quad\Rightarrow\quad \phi^{1/2}\circ\psi=\psi\circ\phi^{1/2}.
        \end{equation}
      \item[(d)] For every semi-linear isomorphism $\psi\colon W \to W$ in the case where $W=W_L = L \otimes_K W_K$ for a Galois extension $L/K$,
        \begin{equation}
          \left(\psi\circ\phi\circ\psi^{-1}\right)^{1/2}=
          \psi\circ\phi^{1/2}\circ\psi^{-1}.
          \label{eq:semilinearconjugation}
        \end{equation}        
      \end{itemize}
      Furthermore, $\phi^{1/2}$ is uniquely determined by properties (a) and (b).
    \end{proposition}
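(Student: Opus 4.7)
\medskip

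\noindent\textbf{Proof proposal.} The plan is to build $\phi^{1/2}$ explicitly as the truncated binomial series
\[
\phi^{1/2} \;:=\; \sum_{k=0}^{N-1}\binom{1/2}{k}\,n^k,
\]
where $N$ is the nilpotency index of $n$. The rational coefficients $\binom{1/2}{k}$ have denominators which are powers of $2$ and therefore lie in the prime field of $L$ since $\characteristic L\neq 2$. Writing $\phi^{1/2}={\bf1}_W+m$ with $m=\sum_{k\geq 1}\binom{1/2}{k}n^k$, property (a) is immediate because $m$ is a polynomial in $n$ without constant term, hence nilpotent. Property (b) follows from the formal power series identity $((1+x)^{1/2})^2=1+x$ in $\QQ[[x]]$: the corresponding polynomial identity holds modulo $x^N$, and substituting $n$ (with $n^N=0$) promotes this to an actual identity of operators on $W$.

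For (c), any $L$-linear automorphism $\psi$ commuting with $\phi={\bf1}_W+n$ commutes with $n$, hence with every polynomial in $n$, and in particular with $\phi^{1/2}$. For (d), a $\sigma$-semi-linear isomorphism $\psi$ (with $\sigma\in\Gal(L/K)$) satisfies $\psi\circ\phi\circ\psi^{-1}={\bf1}_W+\psi n\psi^{-1}$, and $\psi n\psi^{-1}$ is again nilpotent of index at most $N$. Because the coefficients $\binom{1/2}{k}$ lie in $\QQ$ and are therefore fixed by $\sigma$, the semi-linear conjugation acts termwise, yielding
\[
\psi\circ\phi^{1/2}\circ\psi^{-1}\;=\;\sum_{k=0}^{N-1}\binom{1/2}{k}\,(\psi n\psi^{-1})^k,
\]
which is precisely $(\psi\phi\psi^{-1})^{1/2}$ by the same construction applied to $\psi n\psi^{-1}$.

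For uniqueness, suppose $\phi^{1/2}_{\mathrm{alt}}={\bf1}_W+m'$ with $m'$ nilpotent also satisfies (b). Then $m'^2+2m'=n$, so $m'$ commutes with $n$, and consequently with $m$ (which is a polynomial in $n$). Subtracting the two identities $m^2+2m=n$ and $m'^2+2m'=n$ and using the commutativity of $m$ and $m'$ gives
\[
(m-m')\circ(m+m'+2\cdot{\bf1}_W) \;=\; 0.
\]
The second factor is invertible: $m+m'$ is a sum of two commuting nilpotent operators, hence nilpotent, and $2\in L^\times$, so $m+m'+2\cdot{\bf1}_W$ is unipotent up to a scalar. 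Therefore $m=m'$.

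\medskip\noindent\textbf{Main obstacle.} The only non-routine point is uniqueness, which requires that the two candidate nilpotent square roots commute. This is resolved by the observation that our canonical square root is a polynomial in $n$, while any nilpotent square root $m'$ automatically commutes with $n=m'^2+2m'$; everything else is a direct formal consequence of the binomial-series formalism combined with the fact that $n$ is nilpotent and $\characteristic L\neq 2$.
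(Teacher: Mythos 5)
Your proof is correct, but it takes a genuinely different route from the paper. You construct $\phi^{1/2}$ in closed form as the truncated binomial series $\sum_{k=0}^{N-1}\binom{1/2}{k}n^k$, using the (true, and worth noting) fact that $\binom{1/2}{k}=\frac{(-1)^{k+1}C_{k-1}}{2^{2k-1}}$ has only powers of $2$ in its denominator, so all coefficients live in the prime field when $\characteristic L\neq 2$; properties (a)--(d) then follow formally because $\phi^{1/2}$ is a polynomial in $n$ with Galois-fixed coefficients. The paper instead runs a Newton--Heron iteration $\phi_{k+1}=\frac{1}{2}\bigl(\phi_k+\phi_k^{-1}\circ\phi\bigr)$, which stabilizes after finitely many steps because the nilpotency exponent of $\phi_k^2-\phi$ roughly halves at each step; (c) and (d) are then deduced from the compatibility of the recursion with conjugation. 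The uniqueness arguments also differ: you factor the difference of the two equations $m^2+2m=n=m'^2+2m'$ as $(m-m')\circ(m+m'+2\cdot{\bf1}_W)=0$ with invertible second factor (the needed commutativity of $m$ and $m'$ is correctly justified, since $m'$ commutes with $n=m'^2+2m'$ and $m$ is a polynomial in $n$), whereas the paper shows $\widetilde{\phi}\circ(\phi^{1/2})^{-1}={\bf1}_W+\widetilde{n}$ with $\widetilde{n}$ nilpotent and squares to ${\bf1}_W$, forcing $\widetilde{n}=-\frac{1}{2}\widetilde{n}^2$ and hence $\widetilde{n}=0$ by iteration. Your closed formula makes commutation and Galois-equivariance immediate, at the price of the $2$-integrality bookkeeping for the binomial coefficients; the paper's iterative scheme avoids that arithmetic and deliberately mirrors the unipotent stabilization of Proposition \ref{prop:unipotentstabilization}, whose recursion is reused later (e.g.\ in the proof of Theorem \ref{thm:EEQQequivalence}) -- though by the uniqueness statement both constructions produce the same operator, so nothing downstream is affected.
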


    \begin{proof}
      The proof of the existence of $\phi^{1/2}$ satisfying (a) and (b) proceeds analogously to the proof of Proposition \ref{prop:unipotentstabilization}. Specifically, let $n$ from \eqref{eq:phinilpotency} be nilpotent with exponent $e_0 \ge 1$, i.e., $n^{e_0-1} \neq 0$ and $n^{e_0}=0$.

      We consider the recursively defined sequence with initial value $\phi_0:=\phi$ and the recurrence relation
      \begin{equation}
        \phi_{k+1}:=\frac{1}{2}\left(\phi_k+\phi_{k}^{-1}\circ\phi\right).
      \end{equation}
      These are automorphisms of $W$ with the properties
      \begin{equation}
        \phi_k={\bf1}_{W}+n_k,
      \end{equation}
      with $n_k$ nilpotent,
      \begin{equation}
        \phi_k\circ\phi=\phi\circ\phi_k,
      \end{equation}
      and
      \begin{equation}
        \phi_k^2-\phi = \widetilde{n}_k,
      \end{equation}
      with $\widetilde{n}_{k+1}$ nilpotent with exponent $e_{k+1} \le \lceil e_k/2 \rceil$.

      In particular, for $k \gg 0$, we have
      \[
      \phi_k^2=\phi,
      \]
      and thus, by the recurrence relation, $\phi_{k+1} = \phi_k$, i.e., the sequence converges in the discrete topology. Thus $\phi^{1/2} := \phi_k$ is an automorphism that satisfies (a) and (b).

      For this specific choice of $\phi^{1/2}$, statements (c) and (d) follow from the observation that the recurrence relation commutes with conjugation by $\psi$.

      It remains to show that any other automorphism $\widetilde{\phi}\colon W \to W$ that satisfies (a) and (b) must coincide with $\phi^{1/2}$. To this end, we first observe that $\widetilde{\phi}$ commutes with $\phi$ because
      \[
      \widetilde{\phi}\circ\phi=\widetilde{\phi}^3=\phi\circ\widetilde{\phi}.
      \]
      Therefore, $\phi^{1/2}$ and $\widetilde{\phi}$ commute, according to (c).

      Let $\widetilde{\phi}={\bf1}_W+\widetilde{m}$ with $\widetilde{m}$ nilpotent. Then
      \begin{align*}
        \widetilde{\phi}\circ \left(\phi^{1/2}\right)^{-1}
        &=({\bf1}_W+\widetilde{m})\circ\left({\bf1}_W+\sum_{k\geq 1}^\infty (-m)^k\right)\\
        &={\bf1}_W+\widetilde{m}-m+\sum_{k\geq 1}^\infty \widetilde{m}(-m)^k.
      \end{align*}
      Since $\phi^{1/2}$ and $\widetilde{\phi}$ commute, $m$ and $\widetilde{m}$ also commute, which implies that
      \[
      \widetilde{\phi}\circ \left(\phi^{1/2}\right)^{-1}={\bf1}_W+\widetilde{n}
      \]
      with $\widetilde{n}$ nilpotent. The square of the left-hand side is ${\bf1}_W$, from which we obtain the relation
      \[
      {\bf1}_W=({\bf1}_W+\widetilde{n})^2={\bf1}_W+2\widetilde{n}+\widetilde{n}^2.
      \]
      In other words, we have
      \[
      \widetilde{n}=-\frac{1}{2}\widetilde{n}^2,
      \]
      which upon iteration shows that for all $k \ge 1$:
      \[
      \widetilde{n}=\frac{(-1)^{2^k-1}}{2^k}\widetilde{n}^{2^k}.
      \]
      For $k \gg 0$, the right-hand side vanishes, hence $\widetilde{n}=0$, which was to be shown.
    \end{proof}
    
    \begin{remark}
      The starting value $\phi_0 = {\bf1}_W$ leads after one iteration to the same automorphism $\phi_1$ as the starting value $\phi_0 = \phi$.
    \end{remark}

    \begin{corollary}
      Let $\phi\colon W \to W$ be an automorphism of an $L$-vector space with property \eqref{eq:phinilpotency}. Then
      \begin{equation}
        \left(\phi^{1/2}\right)^{-1}=
        \left(\phi^{-1}\right)^{1/2}.
        \label{eq:squarerootinverses}
      \end{equation}
    \end{corollary}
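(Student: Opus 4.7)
The plan is to invoke the uniqueness clause at the end of Proposition \ref{prop:unipotentsquareroots}. Specifically, having observed that $\phi^{-1} = ({\bf1}_W+n)^{-1} = {\bf1}_W + \sum_{k\geq 1}(-n)^k$ is again of the form ${\bf1}_W + n'$ with $n'$ nilpotent, the unipotent square root $(\phi^{-1})^{1/2}$ is defined and is the unique automorphism satisfying properties (a) and (b) of the proposition applied to $\phi^{-1}$. It therefore suffices to show that $(\phi^{1/2})^{-1}$ satisfies those two properties with respect to $\phi^{-1}$.

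For property (b), I would simply compute
\[
\bigl((\phi^{1/2})^{-1}\bigr)^2 = \bigl((\phi^{1/2})^2\bigr)^{-1} = \phi^{-1},
\]
using that $\phi^{1/2}$ is an automorphism. For property (a), write $\phi^{1/2} = {\bf1}_W + m$ with $m$ nilpotent (this is part (a) of the existence statement for $\phi^{1/2}$). Then the Neumann-type expansion
\[
(\phi^{1/2})^{-1} = \sum_{k=0}^{\infty}(-m)^k = {\bf1}_W + \sum_{k=1}^{\infty}(-m)^k = {\bf1}_W + m',
\]
terminates because $m$ is nilpotent, and the remainder $m'$ is again nilpotent (it is a polynomial in the nilpotent operator $m$ with zero constant term).

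By the uniqueness of the unipotent square root, we conclude $(\phi^{1/2})^{-1} = (\phi^{-1})^{1/2}$, which is the claimed identity \eqref{eq:squarerootinverses}. I do not expect any genuine obstacle here; the only subtle point is to notice that $\phi^{-1}$ lies in the domain of the square root construction, which is immediate from the nilpotence of $n$.
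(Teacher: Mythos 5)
Your argument is correct and is exactly the intended one: the paper states this corollary without proof, leaving it as an immediate consequence of the uniqueness clause (properties (a) and (b)) in Proposition \ref{prop:unipotentsquareroots}, which is precisely what you verify for $(\phi^{1/2})^{-1}$ relative to $\phi^{-1}$. Both the check that $\phi^{-1}$ is again unipotent and that $(\phi^{1/2})^{-1}$ satisfies (a) and (b) are handled correctly via the terminating Neumann series.
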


    \begin{definition}[Unipotent square roots]\label{def:squareroots}
      For a given automorphism $\phi\colon W \to W$ of an $L$-vector space $W$ with the property
      \begin{equation}
        \phi=\gamma^2{\bf1}_{W}+n
        \label{eq:phigammanilpotency}
      \end{equation}
      with $\gamma \in L^\times$ and $n$ nilpotent, we call
      \[
      \phi^{1/2}:=\gamma\cdot\left(\gamma^{-2}\phi\right)^{1/2}\colon W\to W
      \]
      the {\em (unipotent) square root} of $\phi$ (with respect to the square root $\gamma$ of $\gamma^2$).
    \end{definition}

    \begin{remark}
      The square root $\phi^{1/2}$ of $\phi$ is uniquely determined by \eqref{eq:phigammanilpotency} up to the choice of sign (of $\phi^{1/2}$ or, respectively, of $\gamma$) (cf. uniqueness in the unipotent case in the sense of Proposition \ref{prop:unipotentsquareroots}(a) and (b)). In particular, for $W=W_L = L \otimes_K W_K$ for a quadratic extension $L/K$ with non-trivial Galois automorphism $c\colon L \to L$ and a $K$-vector space $W_K$, the relation
      \begin{equation}
        \left(\phi^{c}\right)^{1/2}=\left(\phi^{1/2}\right)^c
        \label{eq:squarerootgaloisequivariance}
      \end{equation}
      holds, provided that the square root of $\phi^c$ is understood with respect to $c\gamma$ (cf. also statement (d)). According to \eqref{eq:squarerootinverses}, the notation
      \[
      \phi^{-1/2}:=\left(\phi^{1/2}\right)^{-1}
      \]
      is well-defined and behaves as expected, provided that square roots of inverses are taken with respect to the corresponding inverses of the square roots of the scalars (in the unipotent case, the latter is not relevant).
    \end{remark}

\section{Rational quiver representations}

    \subsection{$\QQ[\sqrt{-1}]$-rational modules and quiver representations}

    With a view to defining the $\QQ$-rational quiver representation, we consider the commutative diagram
    \begin{equation}
    \begin{tikzcd}[column sep=huge,row sep=large]
      M_{-(\ell+1)} \arrow[r, arr, bend left=10, "X"] \arrow[d, arr, "\gamma_\star^2T_-"]  &
      M_{-(\ell-1)} \arrow[l, arr, bend left=10, "Y"] \arrow[r, arr, bend right=10, "XX^{\ell-1}"'] \arrow[d, arr, "\gamma_\star X_\star"] &
      M_{\ell+1} \arrow[l, arr, bend right=10, "X^{-(\ell-1)}Y"'] \arrow[d, arr, "{\bf1}_{M_{\ell+1}}"] \\
      M_{-(\ell+1)} \arrow[r, arr, bend left=10, "Y^{-(\ell-1)}X"] & M_{\ell-1} \arrow[l, arr, bend left=10, "YY^{\ell-1}"] \arrow[r, arr, bend right=10, "X"'] &
      M_{\ell+1} \arrow[l, arr, bend right=10, "Y"']
    \end{tikzcd}
    \end{equation}
    Commutativity refers to the condition that all $4$ squares, the two \lq{}in the front\rq{} with right facing horizontal arrows, as well as the two \lq{}in the back\rq{} with the left facing arrows, commute.
    
    We recall the normalization $X_\star=\gamma_\star^{-1}X^{\ell-1}|_{M_{-(\ell-1)}}$. In a first step, we renormalize this diagram to
    \begin{equation}
    \begin{tikzcd}[column sep=huge, row sep=large]
      M_{-(\ell+1)} \arrow[r, arr, bend left=10, "X"] \arrow[d, arr, "\gamma_\star T_-"]  &
      M_{-(\ell-1)} \arrow[l, arr, bend left=10, "Y"] \arrow[r, arr, bend right=10, "XX_\star"'] \arrow[d, arr, "X_\star"] &
      M_{\ell+1} \arrow[l, arr, bend right=10, "X_\star^{-1}Y"'] \arrow[d, arr, "{\bf1}_{M_{\ell+1}}"] \\
      M_{-(\ell+1)} \arrow[r, arr, bend left=10, "Y^{-(\ell-1)}X"] &
      M_{\ell-1} \arrow[l, arr, bend left=10, "YY^{\ell-1}"] \arrow[r, arr, bend right=10, "X"'] &
      M_{\ell+1} \arrow[l, arr, bend right=10, "Y"']
    \end{tikzcd}
    \end{equation}
    The normalization $Y_\star=\gamma_\star^{-1}Y^{\ell-1}|_{M_{\ell-1}}$ provides us with the commutative renormalized diagram
    \begin{equation}\label{eq:normalizedcomparison}
    \begin{tikzcd}[column sep=huge, row sep=large]
      M_{-(\ell+1)} \arrow[r, arr, bend left=10, "X"] \arrow[d, arr, "T_-"] &
      M_{-(\ell-1)} \arrow[l, arr, bend left=10, "Y"] \arrow[r, arr, bend right=10, "XX_\star"'] \arrow[d, arr, "X_\star"] &
      M_{\ell+1} \arrow[l, arr, bend right=10, "X_\star^{-1}Y"'] \arrow[d, arr, "{\bf1}_{M_{\ell+1}}"] \\
    	M_{-(\ell+1)} \arrow[r, arr, bend left=10, "Y_\star^{-1}X"] &
        M_{\ell-1} \arrow[l, arr, bend left=10, "YY_\star"] \arrow[r, arr, bend right=10, "X"'] & M_{\ell+1} \arrow[l, arr, bend right=10, "Y"']
    \end{tikzcd}
    \end{equation}
    In this way, we obtain a normalized nilpotent quiver representation $\EE_{\QQ[\sqrt{-1}]}(M_{\QQ[\sqrt{-1}]})$:
    \begin{equation*}
    \begin{tikzcd}[column sep=large]
    	M(-) \arrow[rr, arr, bend left=10, "\phi_{a_-}"] & & M(\star) \arrow[ll, arr, bend left=10, "\phi_{b_-}"] \arrow[rr, arr, bend right=10, "\phi_{b_+}"'] & & M(+) \arrow[ll, arr, bend right=10, "\phi_{a_+}"']
    \end{tikzcd}
    \end{equation*}
    over $\QQ[\sqrt{-1}]$, where
    \[
    M(-):= M_{\QQ[\sqrt{-1}]}[\chi_{-(\ell+1)}],\quad
    M(\star):= M_{\QQ[\sqrt{-1}]}[\chi_{-(\ell-1)}],\quad
    M(+):= M_{\QQ[\sqrt{-1}]}[\chi_{\ell+1}],
    \]
    and
    \[
    \phi_{a_+}:=X_\star^{-1}Y,\quad
    \phi_{a_-}:=X,\quad
    \phi_{b_+}:=XX_\star,\quad
    \phi_{b_-}:=Y.
    \]
    The identity
    \[
    \phi_{a_{-}}\circ\phi_{b_{-}} = \phi_{a_{+}}\circ\phi_{b_{+}}
    \]
    shows that we have indeed defined a nilpotent $\QQ[\sqrt{-1}]$-rational quiver representation. This is well-defined for every $\QQ[\sqrt{-1}]$-rational $(\mathfrak{g}, K)$-module
    \[
    M_{\QQ[\sqrt{-1}]}\in \HC_\lambda(\mathfrak{g},K)_{\QQ[\sqrt{-1}]}
    \]
    regardless of whether the latter module descends to a $(\mathfrak{g}, K)$-module over $\QQ$ or not. The assignment
    \[
    M_{\QQ[\sqrt{-1}]}\;\mapsto\;\EE_{\QQ[\sqrt{-1}]}(M_{\QQ[\sqrt{-1}]})
    \]
    is functorial and defines an equivalence between the category $\HC_\lambda(\mathfrak{g}, K)_{\QQ[\sqrt{-1}]}$ and the category of nilpotent finite-dimensional $\QQ[\sqrt{-1}]$-rational representations of the Gelfand quiver. This follows, for example, by faithfully flat descent for the extension $\CC/\QQ[\sqrt{-1}]$ from the classical case, or alternatively by a direct proof analogous to the classical case (the proof of Theorem \ref{thm:EEQQequivalence} below can be simplified to a proof over $\QQ[\sqrt{-1}]$).

    \subsection{The $\QQ$-rational structure}
    
    We now use the diagram \eqref{eq:normalizedcomparison} in the case $M_{\QQ[\sqrt{-1}]}=\QQ[\sqrt{-1}]\otimes_\QQ M_\QQ$ with $M_\QQ\in\HC_\lambda(\mathfrak{g}, K)_{\QQ}$ to define a $\QQ$-rational structure $\varphi_\bullet$ on $\EE_{\QQ[\sqrt{-1}]}(M_{\QQ[\sqrt{-1}]})$ in the sense of Definition \ref{def:rationalrepresentations}.

    For this, we first observe that complex conjugation transforms the commutative diagram \eqref{eq:normalizedcomparison} into the diagram 
    \begin{equation}\label{eq:conjugatenormalizedcomparison}
    \begin{tikzcd}[column sep=huge, row sep=large]
      M_{-(\ell+1)} \arrow[r, arr, bend left=10, "X"] &
      M_{-(\ell-1)} \arrow[l, arr, bend left=10, "Y"] \arrow[r, arr, bend right=10, "XX_\star"'] &
      M_{\ell+1} \arrow[l, arr, bend right=10, "X_\star^{-1}Y"'] \\
      M_{-(\ell+1)} \arrow[u, arr, "{\bf1}_{M_{-(\ell+1)}}"] \arrow[r, arr, bend left=10, "Y_\star^{-1}X"] &
      M_{\ell-1} \arrow[l, arr, bend left=10, "YY_\star"] \arrow[r, arr, bend right=10, "X"'] \arrow[u, arr, "Y_\star"] &
      M_{\ell+1} \arrow[l, arr, bend right=10, "Y"'] \arrow[u, arr, "T_+"]
    \end{tikzcd}
    \end{equation}
    whose commutativity can also be verified directly.

    We now apply unipotent stabilization from Proposition \ref{prop:unipotentstabilization} simultaneously to the three vertical arrows in \eqref{eq:normalizedcomparison}, i.e., we consider the iterative construction for the three situations simultaneously:
    \begin{itemize}
    \item[($+$)] $W_+=M_{\ell+1}$, $W_-=M_{-(\ell+1)}$, $\varphi_+={\bf1}_{M_{\ell+1}}$, $\varphi_-=T_-$,
    \item[($-$)] $W_+=M_{-(\ell+1)}$, $W_-=M_{\ell+1}$, $\varphi_+=T_-$, $\varphi_-={\bf1}_{M_{\ell+1}}$,
    \item[($\star$)] $W_+=M_{-(\ell-1)}$, $W_-=M_{\ell-1}$, $\varphi_+=X_\star$, $\varphi_-=Y_\star$.
    \end{itemize}
    In all three cases, the prerequisites of Proposition \ref{prop:unipotentstabilization} are satisfied, according to Corollary \ref{cor:T_operators} and Corollary \ref{cor:X_Y_inverses}. In the first iteration, because $T_-^c=T_+$ and $X_\star^c=Y_\star$,
    \begin{itemize}
      \item ${\bf1}_{M_{\ell+1}}$ is replaced by $\varphi_{+,+}^{(1)}:=\frac{1}{2}({\bf1}_{M_{\ell+1}}+T_+^{-1})$ and $T_-$ by $\varphi_{+,-}^{(1)}:=\frac{1}{2}(T_-+{\bf1}_{M_{-(\ell+1)}})$,
      \item $T_-$ is replaced by $\varphi_{-,+}^{(1)}:=\frac{1}{2}(T_-+{\bf1}_{M_{-(\ell+1)}})$ and ${\bf1}_{M_{\ell+1}}$ by $\varphi_{-,-}^{(1)}:=\frac{1}{2}({\bf1}_{M_{\ell+1}}+T_+^{-1})$,
      \item $X_\star$ is replaced by $\varphi_{\star,+}^{(1)}:=\frac{1}{2}(X_\star+Y_\star^{-1})$ and $Y_\star$ by $\varphi_{\star,-}^{(1)}:=\frac{1}{2}(Y_\star+X_\star^{-1})$.
    \end{itemize}
    Then the relations $\varphi_{+,\pm}^{(1),c}=\varphi_{-,\pm}^{(1)}$ and $\varphi_{\star,+}^{(1),c}=\varphi_{\star,-}^{(1)}$ continue to hold, so that in what follows we simply define
    \[
    \varphi_\pm^{(1)}:=\varphi_{\pm,+}^{(1)}\quad\text{and}\quad\varphi_\star^{(1)}:=\varphi_{\star,+}^{(1)}.
    \]

    The commutativity of diagrams \eqref{eq:normalizedcomparison} and \eqref{eq:conjugatenormalizedcomparison} shows that the diagram 
    \begin{equation}\label{eq:firstiteration}
    \begin{tikzcd}[column sep=huge, row sep=large]
      M_{-(\ell+1)} \arrow[r, arr, bend left=10, "X"] \arrow[d, arr, "\varphi_{-}^{(1)}"] &
      M_{-(\ell-1)} \arrow[l, arr, bend left=10, "Y"] \arrow[r, arr, bend right=10, "XX_\star"'] \arrow[d, arr, "\varphi_{\star}^{(1)}"] &
      M_{\ell+1} \arrow[l, arr, bend right=10, "X_\star^{-1}Y"'] \arrow[d, arr, "\varphi_{+}^{(1)}"] \\
      M_{-(\ell+1)} \arrow[r, arr, bend left=10, "Y_\star^{-1}X"] &
      M_{\ell-1} \arrow[l, arr, bend left=10, "YY_\star"'] \arrow[r, arr, bend right=10, "X"'] &
      M_{\ell+1} \arrow[l, arr, bend right=10, "Y"']
    \end{tikzcd}
    \end{equation}
    commutes.

    The unipotent stabilizations $\varphi_{-,\pm}^{(\infty)}$, $\varphi_{+,\pm}^{(\infty)}$, $\varphi_{\star,\pm}^{(\infty)}$ in the sense of Corollary \ref{cor:unipotentstabilization} exist in the three cases $+$, $-$, and $\star$.

    As before, we have the relations $\varphi_{+,\pm}^{(\infty),c}=\varphi_{-,\pm}^{(\infty)}$ and $\varphi_{\star,+}^{(\infty),c}=\varphi_{\star,-}^{(\infty)}$, which follow by induction. Therefore, we define as in the case of the first iteration:
    \[
    \varphi_\pm^{(\infty)}:=\varphi_{\pm,+}^{(\infty)}\quad\text{and}\quad\varphi_\star^{(\infty)}:=\varphi_{\star,+}^{(\infty)}.
    \]
    We remark that, thanks to \eqref{eq:conjugateinverse},
    \begin{equation}
      \varphi_{-}^{(\infty),c}\circ\varphi_+^{(\infty)}= {\bf1}_{M_{+(\ell+1)}} \quad\text{and}\quad \varphi_{+}^{(\infty),c}\circ\varphi_-^{(\infty)}= {\bf1}_{M_{-(\ell+1)}},
      \label{eq:conjugateinversepm}
    \end{equation}
    and analogously
    \begin{equation}
      \varphi_{\star}^{(\infty),c}\circ\varphi_\star^{(\infty)}=
      {\bf1}_{M_{-(\ell-1)}} \quad\text{and}\quad \varphi_{\star}^{(\infty)}\circ\varphi_\star^{(\infty),c}=
      {\bf1}_{M_{\ell-1}}.
      \label{eq:conjugateinversestar}
    \end{equation}

    The compositions
    \begin{align}
      \varphi_+\colon M(+)\to M(-),&\quad\text{given by}\quad (c\colon M(-)\to M(+))\circ\varphi_+^{(\infty)}\label{eq:varphiplus}\\
      \varphi_-\colon M(-)\to M(+),&\quad\text{given by}\quad (c\colon M(+)\to M(-))\circ\varphi_-^{(\infty)}\label{eq:varphiminus}\\
      \varphi_\star\colon M(\star)\to M(\star),&\quad\text{given by}\quad (c\colon M(\star)\to M(\star))\circ\varphi_\star^{(\infty)}\label{eq:varphistar}
    \end{align}
    are semi-linear isomorphisms. In the proof of Theorem \ref{thm:EEQQequivalence}, we show that these define a $\QQ$-rational structure in the sense of Definition \ref{def:rationalrepresentations}.

    \subsection{The $\QQ$-rational categorical equivalence}

    \begin{theorem}\label{thm:EEQQequivalence}
    For $\ell\geq 1$, the assignment
    \[
      \EE_\QQ\colon
      M_{\QQ}\mapsto \left(\EE_{\QQ[\sqrt{-1}]}(M_{\QQ[\sqrt{-1}]}),\varphi_\bullet\right)
    \]
    from $\HC_\lambda(\mathfrak{g}, K)_{\QQ}$ to the category $\Rep^{\mathrm{nil}}(\Gamma_\QQ)$ of $\QQ$-rational finite-dimensional nilpotent representations of the $\QQ$-rational Gelfand quiver is well-defined and an equivalence of categories.
    \end{theorem}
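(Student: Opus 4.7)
My plan is to structure the proof in four steps: (i) verify that $\EE_\QQ(M_\QQ)$ really lies in $\Rep^{\mathrm{nil}}(\Gamma_\QQ)$, (ii) functoriality, (iii) fully-faithfulness, and (iv) essential surjectivity.

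For (i), since $G=\Gal(\QQ[\sqrt{-1}]/\QQ)$ has order two, the cocycle condition for $\varphi_\bullet$ collapses to the three involutive identities $\varphi_-\circ\varphi_+=\mathbf{1}_{M(+)}$, $\varphi_+\circ\varphi_-=\mathbf{1}_{M(-)}$, and $\varphi_\star\circ\varphi_\star=\mathbf{1}_{M(\star)}$, all of which follow from the definitions \eqref{eq:varphiplus}--\eqref{eq:varphistar} combined with \eqref{eq:conjugateinversepm}--\eqref{eq:conjugateinversestar}. The edge-compatibility condition \eqref{eq:koecheractiononrep} reduces, in view of $c\,a_\pm=a_\mp$ and $c\,b_\pm=b_\mp$, to the commutativity of the four squares of diagram \eqref{eq:firstiteration} with $\varphi^{(1)}$ replaced by $\varphi^{(\infty)}$. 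Commutativity of the $k$-th diagram together with that of its Galois conjugate (analogous to \eqref{eq:conjugatenormalizedcomparison}) yields commutativity of the $(k+1)$-st through the averaging recursion \eqref{eq:unipotentstabilizationiteration}, whence $\varphi^{(\infty)}$ inherits it. For (ii), any morphism $f\colon M_\QQ\to N_\QQ$ base-changes to a map intertwining $X,Y$, and hence $T_\pm,X_\star,Y_\star$, and commuting with $c$, so it intertwines each iterate $\varphi^{(k)}$ and hence the stabilization.

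For (iii), I would combine Proposition \ref{prop:homdescent} with the parallel Galois-descent identity $\Hom_{\QQ}(M_\QQ,N_\QQ)=\Hom_{\QQ[\sqrt{-1}]}(M_{\QQ[\sqrt{-1}]},N_{\QQ[\sqrt{-1}]})^G$ on the Harish-Chandra side, which follows from the same general Galois descent for vector spaces with extra structure. Since $\EE_{\QQ[\sqrt{-1}]}$ is an equivalence (by faithfully flat descent from the classical Gelfand equivalence over $\CC$, as noted in the text), passage to $G$-invariants produces the desired isomorphism between Hom-spaces.

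For (iv), given $(N,\varphi_\bullet)\in\Rep^{\mathrm{nil}}(\Gamma_\QQ)$, I would apply a quasi-inverse of $\EE_{\QQ[\sqrt{-1}]}$ to obtain $M_{\QQ[\sqrt{-1}]}\in\HC_\lambda(\mathfrak{g},K)_{\QQ[\sqrt{-1}]}$ whose quiver image recovers $N$. On the three ``boundary'' isotypic components $M_{\pm(\ell+1)}$ and $M_{-(\ell-1)}$ (and on $M_{\ell-1}$ via the Galois conjugate of $\varphi_\star$), the maps $\varphi_\pm,\varphi_\star$ furnish a semi-linear involution $\sigma$. I would propagate $\sigma$ to the remaining $K$-isotypic components by imposing the required Galois-equivariance $\sigma(Xm)=Y\sigma(m)$ and $\sigma(Ym)=X\sigma(m)$; the structure theory of HC modules in the block $\HC_\lambda$ — namely that $X$ and $Y$ provide suitable isomorphisms along the $K$-weight ladder outside the central range and preserve the relevant sub- and quotient-modules within it — ensures $\sigma$ is unambiguously defined on every $M_n$, $\mathfrak{sl}_2$-equivariant, and involutive. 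The descended module $M_\QQ:=M_{\QQ[\sqrt{-1}]}^{\sigma}$ then satisfies $\EE_\QQ(M_\QQ)\cong(N,\varphi_\bullet)$ by unwinding the definitions, using that $\varphi^{(\infty)}$ is uniquely determined by $\sigma$ restricted to the boundary.

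The main obstacle is precisely this propagation step in (iv); unipotent stabilization is what makes it possible. The naive candidates built from the structural maps $T_\pm,X_\star,Y_\star$ only furnish Galois descent data ``up to nilpotent error,'' and only the iterative limits $\varphi^{(\infty)}$ simultaneously satisfy the exact involution property together with all of the interlocking commutativities inside the full weight decomposition of $M_{\QQ[\sqrt{-1}]}$ (Propositions \ref{prop:unipotentstabilization} and \ref{prop:unipotentsquareroots}). This is why Section \ref{sec:unipotentstabilization} is indispensable for the essential-surjectivity part of the argument.
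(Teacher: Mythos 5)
Your steps (i)--(iii) are sound and essentially coincide with the paper's proof: well-definedness via \eqref{eq:conjugateinversepm}--\eqref{eq:conjugateinversestar} and the inductive commutativity culminating in \eqref{eq:limitdiagram}, functoriality through the iterates, and full faithfulness from Proposition \ref{prop:homdescent} together with its Harish-Chandra analogue (the paper phrases this as a dimension comparison rather than passage to $G$-invariants, but both rest on the same descent statements).

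The genuine gap is in (iv). Two things are asserted there that constitute the actual core of the paper's argument. First, the ``propagation'' of $\sigma$ is not merely a matter of invertibility of $X,Y$ along the ladder: the given rational structure $\varphi_\bullet$ intertwines the \emph{quiver arrows}, and under any quasi-inverse of $\EE_{\QQ[\sqrt{-1}]}$ these correspond to the \emph{normalized} operators ($\phi_{b_+}=XX_\star$, $\phi_{a_+}=X_\star^{-1}Y$ in \eqref{eq:normalizedcomparison}), not to the raw $X,Y$; so the compatibility $\sigma(Xm)=Y\sigma(m)$ at the non-invertible transition weights $\pm\ell$, and the consistency/involutivity of $\sigma$ across the interior ladder, are exactly where the discrepancies measured by $X_\star$, $Y_\star$, $T_\pm$ enter. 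The paper resolves this not by propagation from the boundary but by an explicit reconstruction (following \cite{ABR}) in which the interior operators are built from unipotent square roots $\phi_\bullet^{1/2}$, forcing the symmetry $X|_{M_{j-1}}=Y|_{M_{1-j}}$ that makes the verification \eqref{eq:Xconjugation}--\eqref{eq:Yconjugation} possible; your appeal to ``the structure theory of HC modules in the block'' does not supply this. Second, and more importantly, your final claim that $\EE_\QQ(M^\sigma)\cong(N,\varphi_\bullet)$ ``by unwinding the definitions, using that $\varphi^{(\infty)}$ is uniquely determined by $\sigma$ restricted to the boundary'' is false as stated: applying $\EE_\QQ$ to the descended module re-runs unipotent stabilization on the operators $({\bf 1},T_-)$ and $(X_\star,Y_\star)$ of the \emph{new} module, and the resulting rational structure does not literally reproduce the given $\varphi_\bullet$, nor does $\EE_{\QQ[\sqrt{-1}]}(M)$ literally reproduce the given arrows (they differ by the factor $X_\star$). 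The paper's proof has to establish $\varphi_\star^{(\infty)}={\bf 1}_{V(\star)}$ via the uniqueness statement of Proposition \ref{prop:unipotentsquareroots}, derive \eqref{eq:pmsquarerootsinverses}, and then exhibit the explicit comparison isomorphisms $\bigl((\varphi_{+}^{(\infty)})^{1/2},{\bf 1}_{V(\star)},(\varphi_{-}^{(\infty)})^{1/2}\bigr)$ and $({\bf 1},X_\star,{\bf 1})$ conjugating the stabilized structure into the prescribed one. Without an argument of this kind your essential surjectivity step does not close, so the proposal as written has a real hole precisely at the point it identifies as ``the main obstacle.''
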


    \begin{proof}
    Thanks to \eqref{eq:conjugateinversepm} and \eqref{eq:conjugateinversestar}, in the notation above, we have
    \[
    \varphi_+\circ\varphi_-=(c\circ\varphi_+^{(\infty)})\circ(c\circ\varphi_-^{(\infty)})=c^2\circ\varphi_+^{(\infty),c}\circ\varphi_-^{(\infty)}={\bf1}_{M(-)}
    \]
    and similarly $\varphi_-\circ\varphi_+={\bf1}_{M(+)}$ and $\varphi_\star\circ\varphi_\star={\bf1}_{M(\star)}$. Thus $\varphi_\bullet$ satisfy the cocycle condition \eqref{eq:semi-linearactiononrep}.

    As in the case of the first iteration \eqref{eq:firstiteration}, it follows by induction that the diagram
    \begin{equation}\label{eq:limitdiagram}
    \begin{tikzcd}[column sep=huge, row sep=large]
      M_{-(\ell+1)} \arrow[r, arr, bend left=10, "X"] \arrow[d, arr, "\varphi_{-}^{(\infty)}"] &
      M_{-(\ell-1)} \arrow[l, arr, bend left=10, "Y"] \arrow[r, arr, bend right=10, "XX_\star"'] \arrow[d, arr, "\varphi_{\star}^{(\infty)}"] &
      M_{\ell+1} \arrow[l, arr, bend right=10, "X_\star^{-1}Y"'] \arrow[d, arr, "\varphi_{+}^{(\infty)}"] \\
      M_{-(\ell+1)} \arrow[r, arr, bend left=10, "Y_\star^{-1}X"] &
      M_{\ell-1} \arrow[l, arr, bend left=10, "YY_\star"] \arrow[r, arr, bend right=10, "X"'] &
      M_{\ell+1} \arrow[l, arr, bend right=10, "Y"']
    \end{tikzcd}
    \end{equation}
    commutes.

    Due to the commutativity of \eqref{eq:limitdiagram}, $\varphi_\bullet$ also satisfy condition \eqref{eq:koecheractiononrep} and thus define a $\QQ$-rational structure on $\EE_{\QQ[\sqrt{-1}]}(M_{\QQ[\sqrt{-1}]})$ in the sense of Definition \ref{def:rationalrepresentations}.

    \smallskip
    \noindent{\bf Functoriality} follows inductively from the recursive process of unipotent stabilization using the commutative diagrams \eqref{eq:normalizedcomparison}, \eqref{eq:conjugatenormalizedcomparison}, \eqref{eq:firstiteration} and in the limit case \eqref{eq:limitdiagram}.

    \paragraph{Fully faithfulness.}\ 
    We have the following 2-commutative diagram:
    \begin{equation}
    \begin{tikzcd}
    	\HC_\lambda(\mathfrak{g},K)_{\QQ[\sqrt{-1}]} \arrow[r] & \Rep^{\mathrm{nil}}(\Gamma_{\QQ[\sqrt{-1}]}) \\
    	\HC_\lambda(\mathfrak{g},K)_\QQ \arrow[u, arr, "{\QQ[\sqrt{-1}]\otimes -}"] \arrow[r] & \Rep^{\mathrm{nil}}(\Gamma_\QQ) \arrow[u, arr, "{\QQ[\sqrt{-1}]\otimes -}"]
    \end{tikzcd}
    \end{equation}
    The upper horizontal arrow is an equivalence of categories. The two vertical arrows are faithful. In particular, the lower horizontal functor is faithful. If we extend this to a $\QQ[\sqrt{-1}]$-linear functor of $\QQ[\sqrt{-1}]$-linear categories by tensoring all $\Hom$-spaces with $\QQ[\sqrt{-1}]$, the resulting functor remains faithful. Thus, a comparison of dimensions, together with Proposition \ref{prop:homdescent}, shows that the lower horizontal functor is also faithful.

    By the same reasoning, using Proposition \ref{prop:homdescent} and its analogue for $(\mathfrak{g}, K)$-modules (Proposition 1.1 in \cite{Januszewski18}), a comparison of dimensions shows that the functor $\EE_\QQ$ is fully faithful.

    \paragraph{Essential surjectivity.}\ 
    It remains to show that $\EE_\QQ$ is essentially surjective. Let $(V,\varphi_\bullet)$ be an arbitrary nilpotent finite-dimensional representation of the Gelfand quiver. Here, $V$ is a nilpotent finite-dimensional representation of the Gelfand quiver over $\QQ[\sqrt{-1}]$, and we consider the following associated $\QQ[\sqrt{-1}]$-rational $(\mathfrak{g}, K)$-module
    \[
    M:=\bigoplus_{k\in\ZZ} M_{2k+\epsilon},\quad \epsilon\in\{0,1\},\;\epsilon\equiv\ell+1\pmod{2},
    \]
    where
    \[
    M_{2k+\epsilon}:=
    \begin{cases}
      V(+), & \text{if }\ell+1\leq 2k+\epsilon,\\
      V(\star), & \text{if } -(\ell-1)\leq 2k+\epsilon\leq\ell-1,\\
      V(-), & \text{if } 2k+\epsilon\leq -(\ell+1).
    \end{cases}
    \]
    We follow the construction in the proof of Theorem 1.8 and Proposition 1.5 in \cite{ABR}. We define the actions of $\SO(2)$ and $H$ on $M_n$ via the character $\chi_n$. To define the actions of $X$ and $Y$, we consider
    \begin{align*}
    n_{\pm}&:=\phi_{b_\pm}\phi_{a_\pm}\colon V(\pm)\to V(\pm),\\
    n_{\star}&:=\phi_{a_\pm}\phi_{b_\pm}\colon V(\star)\to V(\star),\\
    \phi_{\bullet}&:=\ell^2{\bf1}_{V(\bullet)}+4n_{\bullet}\colon V(\bullet)\to V(\bullet).
    \end{align*}
    The relation of the Gelfand quiver implies that $n_{\star}$ and $\phi_{\star}$ are each independent of the choice of sign ``$\pm$''.

    In the following, let $\phi_\bullet^{1/2}$ denote the square root of $\phi_\bullet$ in the sense of Definition \ref{def:squareroots} with respect to the square root $\ell$ of $\ell^2$. We define for $j\neq\pm\ell$:
    \[
    \xi_{\bullet,\pm}^{(j)}:=\phi_\bullet^{1/2}\pm j{\bf1}_{V(\bullet)}.
    \]
    Then we have
    \begin{equation}
      \xi_{\bullet,\pm}^{(j)}\circ\xi_{\bullet,\mp}^{(j)}
      \;=\;
      \phi_\bullet-j^2{\bf1}_{V(\bullet)}.
    \end{equation}
    We further have
    \begin{align*}
      \varphi_{\bullet,c}\circ
      \xi_{\bullet,\pm}^{(j)}
      \circ\varphi_{c\bullet,c}^{-1}
      &=
      \varphi_{\bullet,c}\circ\left(\phi_\bullet^{1/2}\pm j{\bf1}_{V(\bullet)}\right)\circ\varphi_{\bullet,c}^{-1}
      &\text{(by \eqref{eq:semi-linearactiononrep})}\\
      &=
      \phi_{c\bullet}^{1/2}\pm j{\bf1}_{V(c\bullet)}
      &\text{(by \eqref{eq:semilinearconjugation} \& \eqref{eq:koecheractiononrep})}\\
      &=
      \xi_{c\bullet,\pm}^{(j)}.
    \end{align*}
    We define
    \begin{align*}
      2X|_{M_{-(\ell+1)}}&:=2\phi_{a_-},\quad&
      2X|_{M_{\ell-1}}&:=2\phi_{b_+},\\
      2Y|_{M_{-(\ell-1)}}&:=2\phi_{b_-},\quad&
      2Y|_{M_{\ell+1}}&:=2\phi_{a_+},
    \end{align*}
    and for $j\in\ZZ$ with $j\equiv\ell\pmod{2}$ and $j\neq\pm\ell$:
    \[
    2X|_{M_{j-1}}:=\xi_{\bullet,+}^{(j)}\colon M_{j-1}\to M_{j+1},
    \quad
    2Y|_{M_{j+1}}:=\xi_{\bullet,-}^{(j)}\colon M_{j+1}\to M_{j-1},
    \]
    where
    \[
    \bullet=
    \begin{cases}
      +,&\text{if}\;\ell<j,\\
      \star,&\text{if}\;-\ell<j<\ell,\\
      -,&\text{if}\;j<-\ell.
    \end{cases}
    \]
    Then in these cases we obtain
    \begin{align}
      4X|_{M_{j-1}}Y|_{M_{j+1}}&=\xi_{\bullet,+}^{(j)}\circ\xi_{\bullet,-}^{(j)}=\phi_\bullet-j^2{\bf1}_{M_{j-1}},\label{eq:4XY2}\\
      4Y|_{M_{j+1}}X|_{M_{j-1}}&=\xi_{\bullet,-}^{(j)}\circ\xi_{\bullet,+}^{(j)}=\phi_\bullet-j^2{\bf1}_{M_{j+1}}.\label{eq:4YX2}
    \end{align}
    Furthermore, in these cases we have
    \begin{align}
      \varphi_{\bullet,c}\circ
      X|_{M_{j-1}}
      \circ\varphi_{c\bullet,c}^{-1}
      &=Y|_{M_{-(j+1)}},\label{eq:Xconjugation}\\
      \varphi_{\bullet,c}\circ
      Y|_{M_{j+1}}
      \circ\varphi_{c\bullet,c}^{-1}
      &=X|_{M_{-(j-1)}}.\label{eq:Yconjugation}
    \end{align}
    These four equations also hold by definition in the two cases $j=\pm\ell$, i.\,e., they hold for all $j\in\ZZ$ with $j\equiv\ell\pmod{2}$.
      
    From \eqref{eq:4XY2} and \eqref{eq:4YX2} we obtain the relation
    \begin{align*}
      4[X,Y]|_{M_{j+1}}
      &=4X|_{M_{j-1}}Y|_{M_{j+1}}\,-\,4Y|_{M_{j+3}}X|_{M_{j+1}}\\
      &=(\phi_\bullet-j^2{\bf1}_{M_{j+1}})-(\phi_\bullet-(j+2)^2{\bf1}_{M_{j+1}})\\
      &=(-j^2+(j+2)^2){\bf1}_{M_{j+1}}\\
      &=4(j+1){\bf1}_{M_{j+1}},
    \end{align*}
    which corresponds to the desired action of $H$ on $M_{j+1}$. With this action, the following identities are true tautologically:
    \begin{align*}
      [H,2X]|_{M_{j-1}}&=H|_{M_{j+1}}(2X)|_{M_{j-1}}\,-\,(2X)|_{M_{j-1}}H|_{M_{j-1}}\\
      &=(j+1-(j-1))(2X)|_{M_{j-1}}\\
      &=4X|_{M_{j-1}}
    \end{align*}
    and
    \begin{align*}
      [H,2Y]|_{M_{j+1}}&=H|_{M_{j-1}}(2Y)|_{M_{j+1}}\,-\,(2Y)|_{M_{j+1}}H|_{M_{j+1}}\\
      &=(j-1-(j+1))(2Y)|_{M_{j+1}}\\
      &=-4Y|_{M_{j+1}}.
    \end{align*}
    Thus, we have defined the structure of a $(\mathfrak{g}, K)$-module over $\QQ[\sqrt{-1}]$ on $M$.

    \smallskip
    \noindent{\bf The rational structure on $M$.}
    We now equip $M$ with a semi-linear action of $\Gal(\QQ[\sqrt{-1}]/\QQ)$ to provide $M$ with a $\QQ$-structure. To this end, we define the semi-linear map
    \[
    \varphi_c^M:=\sum_{j\equiv\ell\pmod{2}}\varphi_{c}^{(j)}\colon M\to M,
    \]
    where, with the appropriate choice of $\bullet\in\{+,\star,-\}$,
    \begin{equation}
      \varphi_c^{(j)}:=\varphi_{\bullet,c}\colon M_{j}\to M_{-j}.
      \label{eq:varphicj}
    \end{equation}
    Then \eqref{eq:Xconjugation} and \eqref{eq:Yconjugation} show that $\varphi_c^M$ defines a $\QQ$-rational structure on $M$.

    In the following, we denote by $c \colon M \to M$ the corresponding complex conjugation, which is given explicitly by the semi-linear automorphism $\varphi_c^M$ on $M$ and on the subspaces $M_{j+1}$ by the semi-linear isomorphisms \eqref{eq:varphicj}.

    \smallskip
    \noindent{\bf The natural isomorphism.}
    We claim that $\EE_{\QQ}(M,\varphi_\bullet^M)$ is isomorphic to $(V(\bullet),\varphi_\bullet)$. In this regard, we first observe that
    \[
    2^{\ell-1}X^{\ell-1}|_{M_{-(\ell-1)}}\colon M_{-(\ell-1)}\to M_{\ell-1},
    \]
    as a composition of maps of the form
    \[
    \phi_\star^{1/2}\pm j\cdot{\bf1}_{V(\star)}
    \]
    with $-\ell < j < \ell$ is of the form
    \[
    2^{\ell-1}X^{\ell-1}|_{M_{-(\ell-1)}}=\widetilde{\gamma}\cdot{\bf1}_{V(\star)}+\widetilde{n}\colon M_{-(\ell-1)}\to M_{\ell-1},
    \]
    with $0<\widetilde{\gamma}\in\QQ^\times$ and $\widetilde{n}$ nilpotent. Furthermore, this map commutes with $\phi_\star$.
    Therefore, with the normalization $X_\star = (\ell-1)!^{-1} \cdot 2^{-(\ell-1)} \cdot \widetilde{\gamma} \cdot {\bf 1} + \dots$ in mind, we get
    \[
    X_\star=\widetilde{\gamma}' \cdot( {\bf1}_{V(\star)}+\widetilde{n}_\star )
    \]
    with $\widetilde{\gamma}' \in \QQ^\times$ and $\widetilde{n}_\star\colon V(\star)\to V(\star)$ nilpotent.
    The symmetric definition $X|_{M_{j-1}} = Y|_{M_{1-j}}$ shows that $X_\star=Y_\star$ as maps $V(\star)\to V(\star)$. With the complex structure $c$ on $M$, it follows that $X_\star^c=Y_\star=X_\star$, so that for the unipotent stabilization, $\varphi_\star^{(\infty),c}=\varphi_\star^{(\infty)}$ also holds. From this it follows, on one hand, that
    \[
    \left(\varphi_\star^{(\infty)}\right)^2={\bf1}_{V(\star)}.
    \]
    On the other hand, the conditions for the addendum in Corollary \ref{cor:unipotentstabilization} are met, which implies that $\varphi_\star^{(\infty)}-{\bf1}_{V(\star)}$ is nilpotent. Thus, the uniqueness statement from Proposition \ref{prop:unipotentsquareroots} shows that because ${\bf1}_{V(\star)}^2={\bf1}_{V(\star)}$, we must have
    \begin{equation}
      \varphi_\star^{(\infty)}={\bf1}_{V(\star)}.
      \label{eq:phistaridentity}
    \end{equation}
    In particular, the originally given
    \[
    \varphi_\star\colon V(\star)\to V(\star),
    \]
    which we used in \eqref{eq:varphicj} to define the rational structure on $M$, agrees with the rational structure
    \[
    \varphi_\star\circ\varphi_\star^{(\infty)}
    \]
    on $\EE_{\QQ[\sqrt{-1}]}(M)(\star)=V(\star)$ according to \eqref{eq:varphistar}. In particular, the rational structure on $V(\star)$ induced by $\EE_\QQ(M)$ is the one originally given.

    For the following discussion, we remark that the identity \eqref{eq:phistaridentity}, due to the uniqueness of unipotent square roots (cf. Proposition \ref{prop:unipotentsquareroots}), implies
    \begin{equation}
      (\varphi_{\star}^{(\infty)})^{1/2}={\bf1}_{V(\star)}.
      \label{eq:starsquarerootsinverses}
    \end{equation}
    
    In the two cases $\bullet=\pm$, we consider the unipotent stabilizations $\varphi_{\pm}^{(\infty)}$ for the pairs of maps $({\bf1}_{V(+)},T_-)$ and $(T_-,{\bf1}_{V(+)})$ (these coincide up to a sign change, cf. Remark \ref{rmk:unipotentstabilization}). In these cases, the conditions \eqref{eq:unipotentcommutators0} and \eqref{eq:unipotentnilpotency0} of the addendum in Proposition \ref{prop:unipotentstabilization} and Corollary \ref{cor:unipotentstabilization} are met, and the addendum in Corollary \ref{cor:unipotentstabilization} shows that $\varphi_{\pm}^{(\infty)}-{\bf1}_{V(\pm)}$ is also nilpotent. In particular, the unipotent square roots $(\varphi_{\pm}^{(\infty)})^{1/2}$ exist (Proposition \ref{prop:unipotentsquareroots}).

    The relation \eqref{eq:conjugateinverse} together with \eqref{eq:squarerootgaloisequivariance} and \eqref{eq:squarerootinverses} shows the relation
    \begin{equation}
    \varphi_\mp\circ (\varphi_{\mp}^{(\infty)})^{1/2}=
    \left((\varphi_{\pm}^{(\infty)})^{1/2}\right)^{-1}\circ\varphi_\pm.
    \label{eq:pmsquarerootsinverses}
    \end{equation}
    In other words, the complex conjugate of $(\varphi_{\pm}^{(\infty)})^{1/2}$ with respect to the complex conjugation $c$ on $M$ is the inverse of $(\varphi_{\mp}^{(\infty)})^{1/2}$.
    
    If we apply the iteration from the proof of Proposition \ref{prop:unipotentsquareroots} to the diagram \eqref{eq:limitdiagram}, we see, analogously to the proof of the commutativity of \eqref{eq:limitdiagram}, that thanks to \eqref{eq:starsquarerootsinverses}, the diagram
    \begin{equation}\label{eq:limitsquarerootdiagram0}
    \begin{tikzcd}[column sep=huge, row sep=large]
      V(-) \arrow[r, arr, bend left=10, "\phi_{a_-}"] \arrow[d, arr, "(\varphi_{-}^{(\infty)})^{1/2}"'] &
      V(\star) \arrow[l, arr, bend left=10, "\phi_{b_-}"] \arrow[r, arr, bend right=10, "\phi_{b_+}X_\star"'] \arrow[d, arr, "{\bf1}_{V(\star)}"] &
      V(+) \arrow[l, arr, bend right=10, "X_\star^{-1}\phi_{a_+}"'] \arrow[d, arr, "(\varphi_{+}^{(\infty)})^{1/2}"] \\
      V(-) \arrow[r, arr, bend left=10, "\phi_{a_-}"] &
      V(\star) \arrow[l, arr, bend left=10, "\phi_{b_-}"] \arrow[r, arr, bend right=10, "\phi_{b_+}X_\star"'] &
      V(+) \arrow[l, arr, bend right=10, "X_\star^{-1}\phi_{a_+}"']
    \end{tikzcd}
    \end{equation}
    commutes. Diagram \eqref{eq:limitsquarerootdiagram0} commutes if and only if
    \begin{equation}\label{eq:limitsquarerootdiagram1}
    \begin{tikzcd}[column sep=huge, row sep=large]
      V(-) \arrow[r, arr, bend left=10, "\phi_{a_-}"] \arrow[d, arr, "(\varphi_{-}^{(\infty)})^{1/2}"'] &
      V(\star) \arrow[l, arr, bend left=10, "\phi_{b_-}"] \arrow[r, arr, bend right=10, "\phi_{b_+}"'] \arrow[d, arr, "{\bf1}_{V(\star)}"] &
      V(+) \arrow[l, arr, bend right=10, "\phi_{a_+}"'] \arrow[d, arr, "(\varphi_{+}^{(\infty)})^{1/2}"] \\
      V(-) \arrow[r, arr, bend left=10, "\phi_{a_-}"] &
      V(\star) \arrow[l, arr, bend left=10, "\phi_{b_-}"] \arrow[r, arr, bend right=10, "\phi_{b_+}"'] &
      V(+) \arrow[l, arr, bend right=10, "\phi_{a_+}"']
    \end{tikzcd}
    \end{equation}
    commutes. In summary, we see with \eqref{eq:limitsquarerootdiagram1} and \eqref{eq:pmsquarerootsinverses} that the triple
    \begin{align*}
      &(\varphi_{+}^{(\infty)})^{1/2}\colon V(+)\to V(+),\\
      &(\varphi_{-}^{(\infty)})^{1/2}\colon V(-)\to V(-),\\
      &{\bf1}_{V(\star)}\colon V(\star)\to V(\star),
    \end{align*}
    defines a $\QQ[\sqrt{-1}]$-linear automorphism of the quiver representation $V$, which transforms the $\QQ$-rational structure \eqref{eq:varphiplus}, \eqref{eq:varphiminus}, \eqref{eq:varphistar} on $\EE_{\QQ[\sqrt{-1}]}(M)$ into the originally given $\varphi_\bullet$ via conjugation, because \eqref{eq:pmsquarerootsinverses} shows
    \[
    (\varphi_{\mp}^{(\infty), c})^{1/2}\circ\left(\varphi_\pm\circ\varphi_\pm^{(\infty)}\right)\circ\left((\varphi_{\pm}^{(\infty)})^{1/2}\right)^{-1}
    =
    \varphi_\pm\circ
    \left((\varphi_{\pm}^{(\infty)})^{1/2}\right)^{-1}\circ
    \varphi_\pm^{(\infty)}\circ\left((\varphi_{\pm}^{(\infty)})^{1/2}\right)^{-1}
    =
    \varphi_{\pm}.
    \]
    Finally, we note that the $\QQ$-rational quiver representations appearing in \eqref{eq:limitsquarerootdiagram0} and \eqref{eq:limitsquarerootdiagram1} are isomorphic over $\QQ$ via the isomorphism $({\bf1}_{V(+)},X_\star,{\bf1}_{V(-)})$, thanks to the commutativity of both diagrams. This completes the proof.
    \end{proof}

    Combining Theorem \ref{thm:EEQQequivalence} with Theorem \ref{thm:quiveretalespeciesrepresentations} provides us with

    \begin{corollary}
      For $\ell\geq 1$ we have for the \'etale $\QQ$-species $S(\Gamma_\QQ)$ associated to the Gelfand quiver (cf.\ Example \ref{ex:gelfand}) an equivalence
      \[
      \HC_\ell(\mathfrak{g}, K)_{\QQ} \simeq \Rep^{\mathrm{nil}}(S(\Gamma_\QQ)).
      \]
    \end{corollary}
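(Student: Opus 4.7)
The plan is to obtain the stated equivalence as a composition of two previously established equivalences. By Theorem \ref{thm:EEQQequivalence}, the functor $\EE_\QQ$ furnishes an equivalence
\[
\HC_\lambda(\lieg, K)_\QQ \xrightarrow{\sim} \Rep^{\mathrm{nil}}(\Gamma_\QQ),
\]
where $\lambda = \ell^2$. By Theorem \ref{thm:species_equivalence}, built from the quasi-inverse pair $F$ and $H$ of Theorem \ref{thm:quiveretalespeciesrepresentations}, we have an equivalence
\[
\Rep(\Gamma_\QQ) \xrightarrow{\sim} \Rep(S(\Gamma_\QQ)).
\]
Composing, we obtain a candidate equivalence $F \circ \EE_\QQ$ from $\HC_\lambda(\lieg, K)_\QQ$ to $\Rep(S(\Gamma_\QQ))$, and the only genuine task remaining is to verify that the equivalence $(F,H)$ restricts to the nilpotent subcategories on both sides.

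Nilpotency of a $\QQ$-rational quiver representation is by definition a condition on the underlying classical representation of $\Gamma$ over $L = \QQ[\sqrt{-1}]$: sufficiently long compositions of the edge maps $\phi_e$ must vanish. Under $F$, the vertex spaces $W_i = M(v_i)^{G_{v_i}}$ are $L_i$-rational forms of the $M(v_i)$, and by Corollary \ref{cor:WiiMjL} and the construction of the species maps $f_{ij}$, base changing $(W_i,f_{ij})$ from $L_i,L_j$ to $L$ reassembles exactly the original collection $(M(v), \phi_e)$ via the identifications $W_i \otimes_{L_i} L \cong M(v_i)$ and the decomposition of ${}_iM_j\otimes_{L_j}L$ into summands indexed by edges in $E_{ij}$. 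Conversely, the construction of $H$ in Section \ref{sec:rationalquiverrepresentations} is precisely tailored so that the $\phi_e$ built from $(W_i, f_{ij})$ arise as components of $f_{ij}\otimes \mathbf{1}_L$ under the same decomposition. Defining $\Rep^{\mathrm{nil}}(S(\Gamma_\QQ))$ as the full subcategory of those $(W_i,f_{ij})$ whose base change to $L$ is a nilpotent classical representation of $\Gamma$ (equivalently: requiring vanishing of all sufficiently long compositions of components of the $f_{ij}\otimes\mathbf{1}_L$), this condition corresponds bijectively to nilpotency of $H(W_i,f_{ij})$, so $F$ and $H$ restrict to mutually quasi-inverse functors between $\Rep^{\mathrm{nil}}(\Gamma_\QQ)$ and $\Rep^{\mathrm{nil}}(S(\Gamma_\QQ))$.

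The main point requiring care is therefore purely formal: to spell out the intrinsic characterization of $\Rep^{\mathrm{nil}}(S(\Gamma_\QQ))$ in species-theoretic terms, so that ``nilpotency'' is transported correctly across the anti-equivalence of Theorem \ref{thm:KrationalquiveretaleKspeciesantiequivalence}. Once this is fixed, no further obstacle remains: the composition
\[
\HC_\ell(\lieg,K)_\QQ \xrightarrow{\EE_\QQ} \Rep^{\mathrm{nil}}(\Gamma_\QQ) \xrightarrow{F} \Rep^{\mathrm{nil}}(S(\Gamma_\QQ))
\]
is an equivalence of abelian categories, which is the statement of the corollary.
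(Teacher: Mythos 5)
Your proposal is correct and follows the same route as the paper, which obtains the corollary simply by combining Theorem \ref{thm:EEQQequivalence} with the quiver--species equivalence of Theorems \ref{thm:quiveretalespeciesrepresentations} and \ref{thm:species_equivalence}. Your additional remark on transporting nilpotency across $F$ and $H$ (via the identifications $W_i\otimes_{L_i}L\cong M(v_i)$ and the edge-indexed decomposition of ${}_iM_j\otimes_{L_j}L$) only makes explicit what the paper leaves implicit, so no essential difference remains.
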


    In the case $\ell=0$, the case "$\star$" is obsolete in the above construction, and we obtain analogously by unipotent stabilization:
    \begin{theorem}\label{thm:cyclicEEQQequivalence}
    For $\ell=0$, the assignment
    \[
    \EE_\QQ\colon M_{\QQ}\mapsto \left(\EE_{\QQ[\sqrt{-1}]}(M_{\QQ[\sqrt{-1}]}),\varphi_\bullet\right)
    \]
    from $\HC_0(\mathfrak{g}, K)_{\QQ}$ to the category of $\QQ$-rational finite-dimensional nilpotent representations of the $\QQ$-rational cyclic quiver is well-defined and an equivalence of categories.
    \end{theorem}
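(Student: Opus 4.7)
The proof adapts the strategy of Theorem \ref{thm:EEQQequivalence}, exploiting the substantial simplification that arises for $\ell = 0$: the identifications $\pm(\ell-1) = \mp(\ell+1) = \mp 1$ collapse the three vertices of the Gelfand quiver into the two vertices of the cyclic quiver, so the entire $\star$-vertex apparatus disappears.

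For $M_\QQ \in \HC_0(\lieg, K)_\QQ$ with base change $M_{\QQ[\sqrt{-1}]}$, I first set $M(+) := M_1$, $M(-) := M_{-1}$, $\phi_a := X|_{M_{-1}}$, $\phi_b := Y|_{M_1}$. Identity \eqref{eq:4XY_rel} with $p = 0$ yields $4\phi_a\phi_b = C|_{M_1}$ and $4\phi_b\phi_a = C|_{M_{-1}}$, both nilpotent by the generalized infinitesimal character hypothesis, producing a nilpotent cyclic-quiver representation over $\QQ[\sqrt{-1}]$. The $\QQ$-rational structure $\varphi_{\pm,c} := c|_{M_{\pm 1}}$ is inherited directly from complex conjugation on $M_{\QQ[\sqrt{-1}]}$: involutivity $c^2 = \mathbf{1}$ and the intertwining $c \circ X = Y \circ c$ follow tautologically from the $\QQ$-rationality of $M_\QQ$, so no unipotent stabilization intervenes at this stage. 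Fully faithfulness follows verbatim from the corresponding argument in Theorem \ref{thm:EEQQequivalence}, using the $2$-commutative square with vertical base-change functors to $\QQ[\sqrt{-1}]$, Proposition \ref{prop:homdescent}, its Harish-Chandra counterpart (Proposition 1.1 of \cite{Januszewski18}), and the classical equivalence between $\HC_0(\lieg, K)_{\QQ[\sqrt{-1}]}$ and nilpotent cyclic-quiver representations over $\QQ[\sqrt{-1}]$.

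For essential surjectivity, given $(V, \varphi_\bullet)$, I construct $M = \bigoplus_{k \in \ZZ} M_{2k+1}$ with $M_{2k+1} := V(+)$ for $k \geq 0$ and $M_{2k+1} := V(-)$ for $k \leq -1$, equipped with the characters $\chi_{2k+1}$, and initialize $X|_{M_{-1}} := \phi_a$, $Y|_{M_1} := \phi_b$. For non-extremal weights $n = k+1$ with $k \neq 0$, the relation $4XY|_{M_{k+1}} = 4YX|_{M_{k-1}} = C - k^2 \mathbf{1}$ from Proposition \ref{prop:XY_C_relation} has nonvanishing scalar part $-k^2$; setting $C_\bullet := 4\phi_a\phi_b$ on $V(+)$ (respectively $4\phi_b\phi_a$ on $V(-)$), I define $X|_{M_{k-1}} := Y|_{M_{k+1}}$ as the unipotent square root of $\frac{1}{4}(C_\bullet - k^2 \mathbf{1}_{V(\bullet)})$ in the sense of Definition \ref{def:squareroots}, with scalar $\gamma = \sqrt{-1}\, k/2 \in \QQ[\sqrt{-1}]^\times$. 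The Lie relations $[H, X] = 2X$, $[H, Y] = -2Y$, $[X, Y] = H$ follow by the same tautological computation as in the proof of Theorem \ref{thm:EEQQequivalence}, endowing $M$ with the structure of a $(\lieg, K)$-module over $\QQ[\sqrt{-1}]$ of generalized infinitesimal character $0$. The $\QQ$-rational structure $\varphi_c^M := \sum_{n \text{ odd}} \varphi_c^{(n)}$ is assembled by setting $\varphi_c^{(\pm 1)} := \varphi_{\pm, c}$ and inductively transporting along $X, Y$ to obtain candidates on non-extremal weight spaces; the cocycle $\varphi_c^{(-n)} \circ \varphi_c^{(n)} = \mathbf{1}$ holds only up to a nilpotent error, which is eliminated by simultaneous unipotent stabilization (Corollary \ref{cor:unipotentstabilization}) applied at each weight. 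Galois descent then produces $M_\QQ$, and the natural isomorphism $\EE_\QQ(M_\QQ) \cong (V, \varphi_\bullet)$ is provided by $(\varphi_+^{(\infty)})^{1/2} \oplus (\varphi_-^{(\infty)})^{1/2}$, following the final step of the proof of Theorem \ref{thm:EEQQequivalence} with the $\star$-identity $\varphi_\star^{(\infty)} = \mathbf{1}_{V(\star)}$ simply absent.

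The principal technical point distinguishing this argument from the $\ell \geq 1$ case is that $\phi_\bullet = 4 n_\bullet$ is purely nilpotent when $\ell = 0$, so the global unipotent square root $\phi_\bullet^{1/2}$ defined with respect to $\gamma_\star = (\ell-1)!$ in Theorem \ref{thm:EEQQequivalence} is unavailable. Instead one must take unipotent square roots weight-by-weight, relying on the nonvanishing of $k^2$ for $k \neq 0$ to supply the scalar $\gamma$ at each step; the machinery of unipotent stabilization and all categorical components of the argument are otherwise unchanged from the $\ell \geq 1$ case.
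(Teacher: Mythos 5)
The paper's own proof of this theorem is a one-line deferral: the $\star$-vertex apparatus is declared obsolete and the argument of Theorem \ref{thm:EEQQequivalence} is invoked ``analogously by unipotent stabilization.'' Your proposal fills in that outline correctly in its architecture, and you have isolated the genuinely new technical point for $\ell=0$: since $\phi_\bullet=4n_\bullet$ is purely nilpotent, the global square root $\phi_\bullet^{1/2}$ of Definition \ref{def:squareroots} does not exist, and the operators $2X|_{M_{j-1}}$, $2Y|_{M_{j+1}}$ must be produced weight-by-weight as unipotent square roots of $\tfrac14(C_\bullet-j^2)$ with the purely imaginary scalar $\gamma_j=\sqrt{-1}\,j/2$. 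The well-definedness and full-faithfulness parts are fine as stated.

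The soft spot is your treatment of the rational structure on the constructed module $M$. You propose to define $\varphi_c^{(n)}$ for $|n|>1$ by transporting $\varphi_{\pm,c}$ along $X$ and $Y$ and then repairing the cocycle condition by unipotent stabilization at each weight. As written this is incomplete: you would still have to check that the stabilized maps continue to intertwine $X$ and $Y$ (otherwise $\varphi_c^M$ is not a morphism of $(\lieg,K)$-structures), that the stabilizations at different weights are mutually compatible, and that the maps at weight $\pm1$ are not altered (otherwise the final identification $\EE_\QQ(M)\cong(V,\varphi_\bullet)$ is no longer immediate). In fact the detour is unnecessary, and this is where your choice of scalar pays off: since $\gamma_{-j}=c(\gamma_j)$ for $\gamma_j=\sqrt{-1}\,j/2$, the Galois equivariance \eqref{eq:squarerootgaloisequivariance} of unipotent square roots gives
\[
\varphi_{\bullet,c}\circ\bigl(\tfrac14(C_\bullet-j^2)\bigr)^{1/2}_{\gamma_j}\circ\varphi_{\bullet,c}^{-1}
=\bigl(\tfrac14(C_{c\bullet}-j^2)\bigr)^{1/2}_{\gamma_{-j}},
\]
which is exactly $Y|_{M_{-(j-1)}}$ (resp.\ $X|_{M_{-(j+1)}}$). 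Hence the direct definition $\varphi_c^{(n)}:=\varphi_{\pm,c}$ at every odd weight, as in \eqref{eq:varphicj}, satisfies \eqref{eq:Xconjugation}--\eqref{eq:Yconjugation} and the cocycle condition exactly, with no error term; correspondingly $\EE_\QQ(M,\varphi^M_\bullet)$ returns $(V,\varphi_\bullet)$ on the nose and the closing appeal to $(\varphi_\pm^{(\infty)})^{1/2}$ is vacuous (those stabilizations are the identity here). You should either replace the transport-and-stabilize step by this direct verification, or supply the three compatibility checks listed above.
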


    \begin{remark}\label{rmk:generalbases}
      As the above proof shows, the equivalences from Theorem \ref{thm:EEQQequivalence} and Theorem \ref{thm:cyclicEEQQequivalence} generalize to an equivalence
      \[
      \EE_E\colon\HC_\lambda(\mathfrak{g},K)_E \simeq \Rep^{\mathrm{nil}}(\Gamma_E),
      \]
      of modules and representations, respectively, over arbitrary fields $E$ of characteristic $0$, in a manner compatible with base change for Harish-Chandra modules and quiver representations.
    \end{remark}

    Combining Theorem \ref{thm:cyclicEEQQequivalence} with Theorem \ref{thm:quiveretalespeciesrepresentations} shows

    \begin{corollary}
      For $\ell=0$ we have for the \'etale $\QQ$-species $S(\Gamma_\QQ)$ associated to the cyclic quiver (cf.\ Example \ref{ex:cyclic}) an equivalence
      \[
      \HC_0(\mathfrak{g}, K)_{\QQ} \simeq \Rep^{\mathrm{nil}}(S(\Gamma_\QQ)).
      \]
    \end{corollary}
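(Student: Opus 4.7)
The plan is to obtain the equivalence by composing the two equivalences already established. By Theorem \ref{thm:cyclicEEQQequivalence}, the functor $\EE_\QQ$ provides an equivalence
\[
  \HC_0(\mathfrak{g}, K)_{\QQ} \;\simeq\; \Rep^{\mathrm{nil}}(\Gamma_\QQ),
\]
where $\Gamma_\QQ$ is the $\QQ$-rational cyclic quiver of Example \ref{ex:cyclic}. On the other hand, Theorem \ref{thm:species_equivalence} (together with its functorial description in Theorem \ref{thm:quiveretalespeciesrepresentations}) furnishes an equivalence
\[
  \Rep(\Gamma_\QQ) \;\simeq\; \Rep(S(\Gamma_\QQ))
\]
between $\QQ$-rational quiver representations and representations of the associated \'etale $\QQ$-species $S(\Gamma_\QQ)$.

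The only point that is not purely formal is to verify that the latter equivalence restricts to nilpotent objects on both sides. For this, I would observe that under the functor $F\colon \Rep(\Gamma_\QQ) \to \Rep(S(\Gamma_\QQ))$ of Theorem \ref{thm:quiveretalespeciesrepresentations}, the maps $f_{ij}$ are built, via Galois descent, from the original linear maps $\phi_e$ attached to the edges (composed with the semi-linear identifications $\varphi_{v,\sigma}$, which are isomorphisms). Consequently, any cyclic composition of the $f_{ij}$ at the species level corresponds, after base change to $L = \QQ[\sqrt{-1}]$ and via the decomposition \eqref{eq:WiiMjL}, to a direct sum of compositions of the $\phi_e$ along closed paths in the underlying quiver $\Gamma$. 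Nilpotency of the latter (which is the defining condition of $\Rep^{\mathrm{nil}}(\Gamma_\QQ)$) is therefore equivalent to nilpotency of the species maps, so $F$ and its quasi-inverse $H$ restrict to an equivalence
\[
  \Rep^{\mathrm{nil}}(\Gamma_\QQ) \;\simeq\; \Rep^{\mathrm{nil}}(S(\Gamma_\QQ)).
\]

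Composing the two equivalences yields the desired
\[
  \HC_0(\mathfrak{g}, K)_{\QQ} \;\simeq\; \Rep^{\mathrm{nil}}(S(\Gamma_\QQ)).
\]
The main (minor) obstacle is the bookkeeping in the nilpotency argument above; everything else is a direct concatenation of results already proved in the paper.
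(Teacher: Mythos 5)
Your proposal is correct and matches the paper's own argument, which deduces the corollary simply by combining Theorem \ref{thm:cyclicEEQQequivalence} with the quiver--species equivalence of Theorem \ref{thm:quiveretalespeciesrepresentations}. Your extra check that the equivalence restricts to the nilpotent subcategories is a point the paper leaves implicit, and your sketch of it (cyclic compositions of the $f_{ij}$ corresponding, after base change via \eqref{eq:WiiMjL}, to compositions of the $\phi_e$ along closed paths) is the right justification.
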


\section{Examples}\label{sec:examples}

  We illustrate the categorical equivalence with three fundamental classes of examples: finite-dimensional representations, discrete series representations, and principal series representations of $\SL_2(\RR)$. For each example, we describe the Harish-Chandra module, its corresponding nilpotent $\QQ$-rational representation of the Gelfand (or cyclic) quiver, and the corresponding representation of the associated \'etale $\QQ$-species.

  A $\QQ$-rational representation of the Gelfand quiver is visualized as a commutative diagram of $\QQ[\sqrt{-1}]$-vector spaces and maps:
  \begin{equation*}
    \begin{tikzcd}[column sep=large, row sep=large]
      M(-) \arrow[r, arr, "\phi_{a_-}", bend left=10] \arrow[d, arr, "\varphi_-"'] &
      M(\star) \arrow[l, arr, "\phi_{b_-}", bend left=10] \arrow[r, arr, "\phi_{b_+}"', bend right=10] \arrow[d, arr, "\varphi_\star"] &
      M(+) \arrow[l, arr, "\phi_{a_+}"', bend right=10] \arrow[d, arr, "\varphi_+"] \\
      M(+) \arrow[r, arr, "\phi_{a_+}", bend left=10] &
      M(\star) \arrow[l, arr, "\phi_{b_+}", bend left=10] \arrow[r, arr, "\phi_{b_-}"', bend right=10] &
      M(-) \arrow[l, arr, "\phi_{a_-}"', bend right=10]
    \end{tikzcd}
  \end{equation*}
  Here, the maps $\phi_\bullet$ are $\QQ[\sqrt{-1}]$-linear, while the vertical maps $\varphi_\bullet$ are semi-linear with respect to complex conjugation $c \in \Gal(\QQ[\sqrt{-1}]/\QQ)$. The commutativity of the diagram encodes the compatibility condition \eqref{eq:koecheractiononrep}, and the condition $\varphi_\mp\circ\varphi_\pm = \id$ and $\varphi_\star^2=\id$ corresponds to the cocycle condition \eqref{eq:semi-linearactiononrep}. An analogous diagrammatic representation exists for the cyclic quiver.

\subsection{Finite-dimensional representations}

  \noindent{\bf The Harish-Chandra module.}
Let $F_{\ell+1}$ be the irreducible $(\ell+1)$-dimensional representation of $\SL_2$ for $\ell \ge 1$. Its Casimir operator $C$ acts as multiplication by $\lambda=\ell^2$. This module is defined over $\QQ$. The non-trivial $K$-types are one-dimensional and occur at weights $\pm(\ell-1)$, i.\,e., $F_{\ell+1}[\chi_{\pm(\ell-1)}]$ are one-dimensional $\QQ[\sqrt{-1}]$-vector spaces, while $F_{\ell+1}[\chi_{\pm(\ell+1)}] = 0$.

  \smallskip\noindent{\bf The rational quiver representation.}
Since $M(\pm)=0$, the maps $\phi_{a_\pm}$ and $\phi_{b_\pm}$ are zero. The spaces $M_{-(\ell-1)}$ and $M_{\ell-1}$ are the highest and lowest weight spaces, respectively. For finite-dimensional representations, the composition $Y_\star \circ X_\star$ is already the identity, so unipotent stabilization is trivial. We have $\varphi_\pm=0$, while $\varphi_\star = c \circ X_\star$. The diagram for the corresponding $\QQ$-rational representation of the Gelfand quiver is:
  \begin{equation*}
    \begin{tikzcd}[column sep=large, row sep=large]
      0 \arrow[r, arr, "0", bend left=10] \arrow[d, arr, "0"'] &
      \QQ[\sqrt{-1}] \arrow[l, arr, "0", bend left=10] \arrow[r, arr, "0"', bend right=10] \arrow[d, arr, "c \circ X_\star"] &
      0 \arrow[l, arr, "0"', bend right=10] \arrow[d, arr, "0"] \\
      0 \arrow[r, arr, "0", bend left=10] &
      \QQ[\sqrt{-1}] \arrow[l, arr, "0", bend left=10] \arrow[r, arr, "0"', bend right=10] &
      0 \arrow[l, arr, "0"', bend right=10]
    \end{tikzcd}
  \end{equation*}
  where $c$ denotes complex conjugation.

  \smallskip\noindent{\bf The \'etale $\QQ$-species representation.}
  The associated \'etale $\QQ$-species has two nodes, corresponding to the fields $\QQ$ (for the orbit of $\star$) and $\QQ[\sqrt{-1}]$ (for the orbit of $\{+,-\}$). The representation consists of a vector space $W_{\{\star\}}$ over $\QQ$ and a vector space $W_{\{\pm\}}$ over $\QQ[\sqrt{-1}]$. For $F_{\ell+1}$, we have $W_{\{\pm\}} = M(+)^{G_+} = M(+)=0$. The space $W_{\{\star\}} = M(\star)^{G_\star} = M(-(\ell-1)) = \QQ$. All maps between these spaces are zero. The representation is:
  \begin{center}
    \begin{tikzpicture}[baseline=(current bounding box.center)]
      \node (Mstar) {$\QQ$};
      \node (M+)    [right=4em of Mstar] {$0$};
      \draw [arr, bend right=20] (Mstar) to node [below] {$0$} (M+);
      \draw [arr, bend right=20] (M+) to node [above] {$0$} (Mstar);
    \end{tikzpicture}
  \end{center}

\subsection{(Limits of) Discrete series representations}

  \noindent{\bf The Harish-Chandra module.}
  For $\ell \ge 0$, let $D_\ell = D_\ell^+ \oplus D_\ell^-$ be the direct sum of the $(\mathfrak{g},K)$-modules of the holomorphic and anti-holomorphic (limits of) discrete series representations on which $C$ acts via $\lambda=\ell^2$. This module is defined over $\QQ$ and is irreducible over $\QQ$. The components $D_\ell^\pm$ are defined over $\QQ[\sqrt{-1}]$ and are complex conjugate to each other. The lowest weight spaces $D_\ell^\pm[\chi_{\pm(\ell+1)}]$ are one-dimensional, while $D_\ell[\chi_{\pm(\ell-1)}]=0$.

  \smallskip\noindent{\bf The rational quiver representation.}
  Here, $M(\star) = 0$, so the maps $\phi_{a_\pm}, \phi_{b_\pm}$ are all zero. The operator $T_-$ acts as the identity on the one-dimensional space $D_\ell[\chi_{-(\ell+1)}]$, so unipotent stabilization is again trivial. This gives $\varphi_\star=0$ and $\varphi_\pm = c$.
  For $\ell \ge 1$, the associated $\QQ$-rational representation of the Gelfand quiver is:
  \begin{equation*}
    \begin{tikzcd}[column sep=large, row sep=large]
      \QQ[\sqrt{-1}] \arrow[r, arr, "0", bend left=10] \arrow[d, arr, "c"'] &
      0 \arrow[l, arr, "0", bend left=10] \arrow[r, arr, "0"', bend right=10] \arrow[d, arr, "0"] &
      \QQ[\sqrt{-1}] \arrow[l, arr, "0"', bend right=10] \arrow[d, arr, "c"] \\
      \QQ[\sqrt{-1}] \arrow[r, arr, "0", bend left=10] &
      0 \arrow[l, arr, "0", bend left=10] \arrow[r, arr, "0"', bend right=10] &
      \QQ[\sqrt{-1}] \arrow[l, arr, "0"', bend right=10]
    \end{tikzcd}
  \end{equation*}
  For $\ell = 0$, the Gelfand quiver degenerates to the cyclic quiver, and the diagram becomes:
  \begin{equation*}
    \begin{tikzcd}[column sep=large, row sep=large]
      \QQ[\sqrt{-1}] \arrow[r, arr, "0", bend left=10] \arrow[d, arr, "c"] & \QQ[\sqrt{-1}] \arrow[l, arr, "0", bend left=10] \arrow[d, arr, "c"] \\
      \QQ[\sqrt{-1}] \arrow[r, arr, "0", bend left=10] & \QQ[\sqrt{-1}] \arrow[l, arr, "0", bend left=10]
    \end{tikzcd}
  \end{equation*}
  
  \smallskip\noindent{\bf The \'etale $\QQ$-species representation.}
  For $\ell \ge 1$ (Gelfand quiver), the representation space over $\QQ$ is $W_{\{\star\}}=0$. The representation space over $\QQ[\sqrt{-1}]$ is $W_{\{\pm\}} = M(+)^{G_+} = \QQ[\sqrt{-1}]$. All maps are zero.
  \begin{center}
    \begin{tikzpicture}[baseline=(current bounding box.center)]
      \node (Mstar) {$0$};
      \node (M+)    [right=4em of Mstar] {$\QQ[\sqrt{-1}]$};
      \draw [arr, bend right=20] (Mstar) to node [below] {$0$} (M+);
      \draw [arr, bend right=20] (M+) to node [above] {$0$} (Mstar);
    \end{tikzpicture}
  \end{center}
  For $\ell = 0$ (cyclic quiver), the associated species has one node with field $\QQ[\sqrt{-1}]$ and a twisted bimodule $\QQ[\sqrt{-1}]^{\rm twisted}$ (cf. Example \ref{ex:cyclic}). The representation is a one-dimensional $\QQ[\sqrt{-1}]$-vector space with a zero map
  \[
  f \colon \QQ[\sqrt{-1}] \otimes_{\QQ[\sqrt{-1}]} \QQ[\sqrt{-1}]^{\rm twisted} \to \QQ[\sqrt{-1}].
  \]
  
  \subsection{Principal series representations}
  
  \noindent{\bf The Harish-Chandra module.}
  For $\ell>0$ and $\epsilon\in\{0,1\}$ such that $\ell\not\equiv\epsilon\pmod{2}$, let $P_{\ell,\epsilon}$ be the principal series representation induced from the character $\mathrm{sgn}^{\epsilon}|\cdot|^{\ell-1}$ of the standard Borel subgroup. It is defined over $\QQ$ and fits into a non-split short exact sequence
  \[
  0 \to D_\ell \to P_{\ell,\epsilon} \to F_{\ell+1} \to 0.
  \]
  All relevant $K$-type spaces, $P_{\ell,\epsilon}[\chi_{\pm(\ell\pm 1)}]$, are one-dimensional.
  The dual principal series representation $P_{\ell,\epsilon}^\vee$ fits into the sequence $0 \to F_{\ell+1} \to P_{\ell,\epsilon}^\vee \to D_\ell \to 0$.
  
  \smallskip\noindent{\bf The rational quiver representation.}
  For $P_{\ell,\epsilon}$, unipotent stabilization is trivial. We have $\varphi_\pm = c$ and $\varphi_\star = c \circ X_\star$. The maps from $M(\pm)$ to $M(\star)$ factor through the quotient $F_{\ell+1}$ and are thus non-zero isomorphisms. The maps from $M(\star)$ to $M(\pm)$ factor through the submodule $D_\ell$ and are thus zero. After choosing suitable bases, the representation diagram is:
  \begin{equation*}
    \begin{tikzcd}[column sep=large, row sep=large]
      \QQ[\sqrt{-1}] \arrow[r, arr, "0", bend left=10] \arrow[d, arr, "c"'] &
      \QQ[\sqrt{-1}] \arrow[l, arr, "=", bend left=10] \arrow[r, arr, "="', bend right=10] \arrow[d, arr, "c \circ X_\star"] &
      \QQ[\sqrt{-1}] \arrow[l, arr, "0"', bend right=10] \arrow[d, arr, "c"] \\
      \QQ[\sqrt{-1}] \arrow[r, arr, "0", bend left=10] &
      \QQ[\sqrt{-1}] \arrow[l, arr, "=", bend left=10] \arrow[r, arr, "="', bend right=10] &
      \QQ[\sqrt{-1}] \arrow[l, arr, "0"', bend right=10]
    \end{tikzcd}
  \end{equation*}
  For the dual representation $P_{\ell,\epsilon}^\vee$, the horizontal arrows are reversed:
  \begin{equation*}
    \begin{tikzcd}[column sep=large, row sep=large]
      \QQ[\sqrt{-1}] \arrow[r, arr, "=", bend left=10] \arrow[d, arr, "c"'] &
      \QQ[\sqrt{-1}] \arrow[l, arr, "0", bend left=10] \arrow[r, arr, "0"', bend right=10] \arrow[d, arr, "c \circ X_\star"] &
      \QQ[\sqrt{-1}] \arrow[l, arr, "="', bend right=10] \arrow[d, arr, "c"] \\
      \QQ[\sqrt{-1}] \arrow[r, arr, "=", bend left=10] &
      \QQ[\sqrt{-1}] \arrow[l, arr, "0", bend left=10] \arrow[r, arr, "0"', bend right=10] &
      \QQ[\sqrt{-1}] \arrow[l, arr, "="', bend right=10]
    \end{tikzcd}
  \end{equation*}
  
  \smallskip\noindent{\bf The \'etale $\QQ$-species representation.}
  For $P_{\ell,\epsilon}$, the representation spaces are
  \[
  W_{\{\star\}} = \QQ\quad\text{and}\quad W_{\{\pm\}} = \QQ[\sqrt{-1}].
  \]
  The map
  \[
  f_{\{\pm\},\{\star\}} \colon \QQ[\sqrt{-1}] \otimes_{\QQ[\sqrt{-1}]} \QQ[\sqrt{-1}] \to \QQ
  \]
  corresponds to the non-zero maps $\phi_{b_\pm}$ and is given by the trace $\mathrm{tr}_{\QQ[\sqrt{-1}]/\QQ}$. The map $f_{\{\star\},\{\pm\}}$ is zero.
  \begin{center}
    \begin{tikzpicture}[baseline=(current bounding box.center)]
      \node (Mstar) {$\QQ$};
      \node (M+)    [right=8em of Mstar] {$\QQ[\sqrt{-1}]$};
      \draw [arr, bend right=20] (Mstar) to node [below] {$0$} (M+);
      \draw [arr, bend right=20] (M+) to node [above] {$\mathrm{tr}_{\QQ[\sqrt{-1}]/\QQ}$} (Mstar);
    \end{tikzpicture}
  \end{center}
  For the dual representation $P_{\ell,\epsilon}^\vee$, the map
  \[
  f_{\{\star\},\{\pm\}} \colon \QQ \otimes_\QQ \QQ[\sqrt{-1}] \to \QQ[\sqrt{-1}]
  \]
  corresponds to the non-zero maps $\phi_{a_\pm}$ and is given by the canonical inclusion $\QQ \hookrightarrow \QQ[\sqrt{-1}]$, while $f_{\{\pm\},\{\star\}}$ is zero.
  \begin{center}
    \begin{tikzpicture}[baseline=(current bounding box.center)]
      \node (Mstar) {$\QQ$};
      \node (M+)    [right=8em of Mstar] {$\QQ[\sqrt{-1}]$};
      \draw [arr, bend right=20] (Mstar) to node [below] {$\subseteq$} (M+);
      \draw [arr, bend right=20] (M+) to node [above] {$0$} (Mstar);
    \end{tikzpicture}
  \end{center}

\end{document}